\newcommand*\circled[2][1.6]{\tikz[baseline=(char.base)]{
    \node[shape=circle, draw, inner sep=1pt,
        minimum height={\f@size*#1},] (char) {\vphantom{WAH1g}#2};}}
\def\Cox{\mathop{\rm Cox}}
\def\KCox{{}^K\mathop{\rm Cox}}
\def\ge{\geqslant}
\def\le{\leqslant}
\def\a{\alpha}
\def\o{\omega}
\def\s{\sigma}
\def\t{\tau}
\def\th{\theta}
\def\l{\lambda}
\def\i{^{-1}}
\def\<{\langle}
\def\>{\rangle}
\newcommand{\bG}{\mathbf G}
\newcommand{\bJ}{\mathbf J}
\newcommand{{\BG}}{\ensuremath{\mathbb {G}}\xspace}
\newcommand{{\BK}}{\ensuremath{\mathbb {K}}\xspace}
\newcommand{\BR}{\ensuremath{\mathbb {R}}\xspace}
\newcommand{\BS}{\ensuremath{\mathbb {S}}\xspace}
\newcommand{\BZ}{\ensuremath{\mathbb {Z}}\xspace}
\newcommand{\ssrk}{{\mathrm{rank}_F^{\mathrm{ss}}}}
\newcommand{\Ad}{{\mathrm{Ad}}}
\DeclareMathOperator{\Adm}{Adm}
\newcommand{\KAdm}{{}^{K}\!{\rm Adm}}
\newcommand{\GL}{\mathrm{GL}}
\newcommand{\id}{\ensuremath{\mathrm{id}}\xspace}
\DeclareMathOperator{\Res}{Res}
\def\tW{\tilde W}
\def\tS{\tilde \BS}
\def\kk{\mathbf k}
\DeclareMathOperator{\supp}{supp}
\newcommand\brI{\breve{\mathcal I}}
\newcommand\brK{\breve{\mathcal K}}
\newcommand\brG{\breve{G}}
\newcommand\brQp{\breve{\mathbb Q}_p}
\newcommand{\Pcirc}{P^\circ}
\newcommand{\Pstab}{P}
\newcommand{\Pprimecirc}{{P'}^\circ}
\newcommand{\Pprimestab}{P'}
\newcommand{\Gr}{{\rm Gr}}
\newcommand\newresult{\circled[1.2]{\textsf{\textbf{N}}}\ }
\newcommand\newresultA{\circled[0.7]{\textsf{\textbf{N}${}^*$}}\ }
\newcommand\opencase{\circled[1.2]{\textsf{\textbf{?}}}\ }
\newcommand\knownresult{\circled[1.2]{\checkmark}\ }
\newtheorem{theorem}{Theorem}
\newtheorem{proposition}[theorem]{Proposition}
\newtheorem{lemma}[theorem]{Lemma}
\newtheorem{corollary}[theorem]{Corollary}
\theoremstyle{definition}
\newtheorem{definition}[theorem]{Definition}
\newtheorem{example}[theorem]{Example}
\newtheorem{remark}[theorem]{Remark}
\numberwithin{equation}{section}
\numberwithin{theorem}{section}
\renewcommand{\to}{%
   \ifbool{@display}{\longrightarrow}{\rightarrow}%
   }
\let\shortmapsto\mapsto
\renewcommand{\mapsto}{%
   \ifbool{@display}{\longmapsto}{\shortmapsto}%
   }
\newlength{\olen}
\newlength{\ulen}
\newlength{\xlen}
\newcommand{\xra}[2][]{%
   \ifbool{@display}%
      {\settowidth{\olen}{$\overset{#2}{\longrightarrow}$}%
       \settowidth{\ulen}{$\underset{#1}{\longrightarrow}$}%
       \settowidth{\xlen}{$\xrightarrow[#1]{#2}$}%
       \ifdimgreater{\olen}{\xlen}%
          {\underset{#1}{\overset{#2}{\longrightarrow}}}%
          {\ifdimgreater{\ulen}{\xlen}%
             {\underset{#1}{\overset{#2}{\longrightarrow}}}
             {\xrightarrow[#1]{#2}}}}%
      {\xrightarrow[#1]{#2}}
   }
\newcommand{\xyra}[2][]{%
   \settowidth{\xlen}{$\xrightarrow[#1]{#2}$}%
   \ifbool{@display}%
      {\settowidth{\olen}{$\overset{#2}{\longrightarrow}$}%
       \settowidth{\ulen}{$\underset{#1}{\longrightarrow}$}%
       \ifdimgreater{\olen}{\xlen}%
          {\mathrel{\xymatrix@M=.12ex@C=3.2ex{\ar[r]^-{#2}_-{#1} &}}}%
          {\ifdimgreater{\ulen}{\xlen}%
             {\mathrel{\xymatrix@M=.12ex@C=3.2ex{\ar[r]^-{#2}_-{#1} &}}}
             {\mathrel{\xymatrix@M=.12ex@C=\the\xlen{\ar[r]^-{#2}_-{#1} &}}}}}%
      {\mathrel{\xymatrix@M=.12ex@C=\the\xlen{\ar[r]^-{#2}_-{#1} &}}}%
   }
\newcommand{\xla}[2][]{%
   \ifbool{@display}%
      {\settowidth{\olen}{$\overset{#2}{\longleftarrow}$}%
       \settowidth{\ulen}{$\underset{#1}{\longleftarrow}$}%
       \settowidth{\xlen}{$\xleftarrow[#1]{#2}$}%
       \ifdimgreater{\olen}{\xlen}%
          {\underset{#1}{\overset{#2}{\longleftarrow}}}%
          {\ifdimgreater{\ulen}{\xlen}%
             {\underset{#1}{\overset{#2}{\longleftarrow}}}
             {\xleftarrow[#1]{#2}}}}%
      {\xleftarrow[#1]{#2}}
   }
\newcommand{\isoarrow}{%
   \ifbool{@display}{\overset{\sim}{\longrightarrow}}{\xrightarrow\sim}%
   }
\begin{document}

\title[Basic loci]{Basic loci of Coxeter type\\with arbitrary parahoric level}

\author{Ulrich G\"ortz}
\address{U.~G., University of Duisburg-Essen, Fakult\"at f\"ur Mathematik, 45117 Essen, Germany}
\email{ulrich.goertz@uni-due.de}
\author{Xuhua He}
\address{X.~H., The Institute of Mathematical Sciences and Department of Mathematics, The Chinese University of Hong Kong, Shatin, N.T., Hong Kong SAR, China}
\email{xuhuahe@math.cuhk.edu.hk}
\author{Sian Nie}
\address{S.~N., Institute of Mathematics, Academy of Mathematics and Systems Science, Chinese Academy of Sciences, 100190, Beijing, China}
\email{niesian@amss.ac.cn}

\thanks{}

\keywords{Reduction of Shimura varieties; Rapoport-Zink spaces; Affine
	Deligne-Lusztig varieties; Basic loci}
\subjclass[2010]{11G18, 14G35, 20G25}


\begin{abstract}
    Motivated by the desire to understand the geometry of the basic loci in the reduction of Shimura varieties, we study their ``group-theoretic models'' --- generalized affine Deligne--Lusztig varieties --- in cases where they have a particularly nice description. Continuing the work of~\cite{GH} and~\cite{GHN} we single out the class of cases \emph{of Coxeter type}, give a characterization in terms of the dimension, and obtain a complete classification. We also discuss known, new and open cases from the point of view of Shimura varieties/Rapoport--Zink spaces.
\end{abstract}

\maketitle

\tableofcontents

\section{Introduction}

\subsection{}
One way to understand arithmetic information encoded in Shimura varieties is to study the geometry and cohomology of the special fiber of a suitable integral model. This method has been applied with great success for example in the work of Harris and Taylor on the local Langlands correspondence for $GL_n$. The moduli description which is available at least in the case of Shimura varieties of Hodge type, together with the particular structure obtained (ultimately) from the Frobenius morphism, allows to define certain stratifications of the special fiber whose strata can be studied one by one. In particular, there is the Newton stratification which is defined, roughly speaking, by grouping those points into one stratum whose corresponding abelian varieties have isogenous $p$-divisible groups. Another important stratification is the EKOR stratification defined in~\cite{HR:axioms}, a stratification which simultaneously generalizes the Ekedahl--Oort stratification for hyperspecial level and the Kottwitz--Rapoport stratification for Iwahori level.

It has been observed, over the past few decades, that in certain cases the unique closed Newton stratum, the so-called \emph{basic locus}, has a simple description. More precisely, it has a stratification as a union of Deligne--Lusztig varieties, where the index set of the union and the combinatorics of the closure relations can be described in terms of a certain Bruhat--Tits building attached to the situation at hand. While for the Siegel moduli space of principally polarized $g$-dimensional abelian varieties this works only when $g \le 2$, there are several infinite families where such a description is possible; the cases studied in most detail so far arise from unitary Shimura varieties for unitary groups of signature $(1, n-1)$. See the paper~\cite{Vollaard-Wedhorn} by Vollaard and Wedhorn for a prototypical example, and Section~\ref{sec:individual-cases} for a more detailed discussion of individual cases and further references. Results of this type have found applications of different kinds:
\begin{itemize}
\item Explicit descriptions of the basic locus have been used to compute intersection numbers of special cycles on the special fiber of a Shimura variety, in order to prove results predicted by the Kudla--Rapoport program which relates such intersection numbers to Fourier coefficients of modular forms (in a general sense). As examples, we mention~\cite{Kudla-Rapoport-2}, \cite{Terstiege}, \cite{Li-Zhu}.

\item Similar intersection numbers on Rapoport--Zink spaces play a role in the arithmetic fundamental lemma and in arithmetic transfer conjectures.
See for instance \cite{Zhang:AFL}, \cite{Rapoport-Smithling-Zhang}.

\item In a different direction, a good understanding of the basic locus has been of high importance for some recent results around the Tate conjecture for the special fiber of certain Shimura varieties. See for example \cite{Helm-Tian-Xiao}, \cite{Tian-Xiao-2}, \cite{Xiao-Zhu}.
\end{itemize}

\smallskip

In this paper, we give a group-theoretic view on this phenomenon, extending previous work (see~\cite{GH},~\cite{GHN}) in this direction.

\subsection{}
Let us explain the main results of this paper.
We fix a connected reductive group $\bG$ over a non-archimedean local field $F$ and a conjugacy class of cocharacters $\mu$ of $\bG$ over a (fixed) algebraic closure of $F$. Let $\t\in B(\bG, \mu)$ be the unique basic element. Fix a rational level structure $K$. See Section~\ref{sec:preliminaries} for the notation used here and for more details.

The central object of this paper is the generalized \emph{affine Deligne--Lusztig variety} $X(\mu, \t)_K$, which can be viewed as a group-theoretic model of the basic locus mentioned above, in those cases where $\bG$ and $\mu$ come from a Shimura datum. This is a perfect scheme, locally perfectly of finite type over an algebraic closure of the residue class field of $F$, when $F$ has mixed characteristic. It is a scheme locally of finite type over an algebraic closure of the residue class field of $F$, if $F$ has equal characteristic.

The following definition (which originates from~\cite{GH}) singles out a class of particularly well-behaved cases. The idea behind it is to express the condition that $X(\mu, \t)_K$ is a union of (perfections of) classical Deligne--Lusztig varieties attached to a twisted Coxeter element (in some finite Weyl group).

We define (cf.~Definition~\ref{def:coxeter-type} for further details and equivalent formulations)

\begin{definition}
    The datum $(\bG, \mu, K)$ is said to be \emph{of Coxeter type} if every EKOR stratum that occurs in $X(\mu, \t)_K$ is the EKOR stratum of a Weyl group element $w$ that is a twisted Coxeter element in a finite standard parabolic subgroup of the Iwahori-Weyl group $\tW$.
\end{definition}

The notion of EKOR strata that we use here is the local version of the EKOR strata introduced in \cite{HR:axioms}, an interpolation between Ekedahl--Oort and Kottwitz--Rapoport strata. See Sections~\ref{def-fullyHN} and~\ref{subsec:bt-stratification} for further details.

The main novelty in this paper are new characterizations of the cases of Coxeter type, on the one hand by a simple dimension condition, on the other hand, equivalently, by an explicit group-theoretic condition which involves neither affine Deligne--Lusztig varieties nor the $\mu$-admissible set. As a consequence, we obtain a classification of all Coxeter cases.

We start by establishing the following general lower bound
\begin{theorem} {\rm (Theorem~\ref{prop:dim-lower-bound})}
    Let $\bJ_\t$ denote the $\s$-centralizer of $\t$.
	Suppose that $\mu$ is non-central in every simple factor of the adjoint group $\bG_{\text{ad}}$ over $F$. We have that
    \[
        \dim X(\mu, \t)_K \ge \ssrk(\bJ_\t).
    \]
\end{theorem}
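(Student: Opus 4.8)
The plan is to exhibit, inside $X(\mu,\t)_K$, a single EKOR stratum which is (a perfection of) a classical Deligne--Lusztig variety attached to a twisted Coxeter element and whose dimension is exactly $\ssrk(\bJ_\t)$; since the EKOR strata are locally closed, this yields the bound. I would begin with the standard reductions: both $\dim X(\mu,\t)_K$ and $\ssrk(\bJ_\t)$ are additive over products and compatible with passage to the adjoint group $\bG_{\mathrm{ad}}$, so one may assume $\bG=\bG_{\mathrm{ad}}$ is $F$-simple; the hypothesis then reads $\mu\neq 0$. Since $\t$ is basic, $\bJ_\t$ is the inner form of $\bG$ whose Frobenius is $\varsigma:=\Ad(\dot\t)\circ\s$, where $\dot\t$ is a length-zero representative normalizing the standard Iwahori; put $r:=\ssrk(\bJ_\t)$. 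The automorphism $\varsigma$ permutes $\tS$, and by Bruhat--Tits theory the relative local Dynkin diagram of the $F$-simple group $\bJ_\t$ has vertex set $\tS/\varsigma$; being affine of relative rank $r$, it has $r+1$ vertices, i.e.\ $\#(\tS/\varsigma)=r+1$. Discarding one $\varsigma$-orbit produces a $\varsigma$-stable subset $\mathbf{C}\subseteq\tS$ with $\#(\mathbf{C}/\varsigma)=r$ and $\tW_{\mathbf{C}}$ finite; let $c\in\tW_{\mathbf{C}}$ be a $\varsigma$-twisted Coxeter element (one simple reflection per $\varsigma$-orbit in $\mathbf{C}$), so $\ell(c)=r$.

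The next step is to study $w:=\t c$. Because $\ell(\dot\t)=0$ we have $\ell(w)=r$; because $c$ is $\varsigma$-elliptic in the finite group $\tW_{\mathbf{C}}$ (a twisted Coxeter element fixes no nonzero vector in the reflection representation), passing from $\t$ to $\t c$ changes neither $\kappa_{\bG}$ nor the central Newton point, so $[\dot w]=[\dot\t]=\t$ in $B(\bG)$. Granting that $w$ occurs as the label of an EKOR stratum of $X(\mu,\t)_K$, the Deligne--Lusztig reduction underlying the Coxeter-type philosophy (cf.\ Section~\ref{subsec:bt-stratification} as well as~\cite{GH} and~\cite{GHN}) identifies that stratum, after quotienting by the action of the discrete group $\bJ_\t(F)$, with a classical Deligne--Lusztig variety for the twisted Coxeter element $c$; such a variety is irreducible of dimension $\ell(c)=r$. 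For the level-$K$ bookkeeping one chooses $c$ of minimal length in the relevant $W_K$-coset, so that the projection $X(\mu,\t)_{\brI}\to X(\mu,\t)_K$ does not collapse the stratum. It then follows that $\dim X(\mu,\t)_K\ge r=\ssrk(\bJ_\t)$.

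The remaining --- and, I expect, the hard --- point is to show that $w=\t c$, for a suitable choice of the discarded orbit and of the reflections making up $c$, really does occur as an EKOR label, i.e.\ $\t c\in\KAdm(\mu)$. This is precisely where non-centrality of $\mu$ is indispensable: if $\mu$ were central then $\Adm(\mu)=\{\t\}$ and no $c$ of positive length could be admissible. The substance is that the $\mu$-admissible set ``reaches far enough'' to accommodate a $\t$-twisted Coxeter element that covers all $r$ relative-simple directions of $\bJ_\t$ at once. At hyperspecial (or special maximal parahoric) level this can be recast numerically: using the dimension formula $\dim X(\mu,\t)_K=\langle\rho,\mu-\nu_\t\rangle-\tfrac12\,\mathrm{def}_{\bG}(\t)$ together with the identity $\mathrm{def}_{\bG}(\t)=\ssrk(\bG)-\ssrk(\bJ_\t)$ (valid as $\t$ is basic), the assertion becomes
\[
    \langle 2\rho,\ \mu-\nu_\t\rangle\ \ge\ \ssrk(\bG)+\ssrk(\bJ_\t),
\]
an inequality one can verify from the combinatorics of $(\bG,\mu)$ --- for $\bG=\GL_n$ and $\t$ of Kottwitz invariant $k$ it reads $k(n-k)\ge n+\gcd(k,n)-2$ --- and which, tellingly, becomes an equality exactly in the Coxeter-type situations. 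For general $K$ one argues directly with $\KAdm(\mu)$ instead. I anticipate that converting this numerical/combinatorial input into the admissibility (and $K$-minimality) of an explicit twisted Coxeter element, uniformly in $(\bG,\mu,K)$, will be the technical core of the proof, possibly via a short case analysis along the classification.
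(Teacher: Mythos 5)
Your overall architecture agrees with the paper's: both arguments exhibit a single element $w\in\KAdm(\mu)$ with $W_{\supp_\s(w)}$ finite and $\ell(w)=\ssrk(\bJ_\t)$, whose EKOR stratum then has dimension $\ell(w)$. But the step you yourself flag as "the technical core" --- showing that your candidate $w=\t c$, with $c$ a twisted Coxeter element built from the orbit structure of $\tS$, actually lies in $\KAdm(\mu)$ --- is a genuine gap, and the routes you propose for filling it are problematic. A case analysis "along the classification" would be circular in the logic of this paper, since the lower bound is proved \emph{before} and used as input to the classification (via Lemma~\ref{ineq-1} and Corollary~\ref{ineq-2}); and the numerical route via $\dim X(\mu,\t)_K=\langle\rho,\mu-\nu_\t\rangle-\tfrac12\mathrm{def}_{\bG}(\t)$ only applies at hyperspecial level and still leaves the inequality $\langle\underline{\mu},2\rho\rangle\ge\ssrk(\bG)+\ssrk(\bJ_\t)$ to be checked case by case. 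Moreover, there is no reason an arbitrarily chosen twisted Coxeter element supported on $\tS$ minus one orbit should be $\mu$-admissible, nor that $\t c$ can be arranged to lie in ${}^K\tW$; these constraints interact with $\mu$ and $K$ in a way your construction does not control.

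The paper sidesteps all of this with a simpler device: choose $\xi\in W_0\cdot\underline{\mu}$ with $t^\xi\in{}^K\tW$ (Lemma~\ref{K-min}), take a reduced expression $t^\xi=\t s_{i_1}\cdots s_{i_k}$, and set $w=\t s_{i_1}\cdots s_{i_m}$ with $m=\ssrk(\bJ_\t)$. Then $w\le t^\xi$ gives $w\in\Adm(\mu)$ for free, prefixes of reduced words of elements of ${}^K\tW$ stay in ${}^K\tW$, non-centrality of $\mu$ guarantees $\ell(t^\xi)\ge m$ so the truncation is possible, and $\ell(w)=m<\sharp\{(\Ad(\t)\circ\s)\text{-orbits on }\tS\}$ forces $\supp_\s(w)$ to miss an orbit, hence $W_{\supp_\s(w)}$ is finite and $\dim X_{K,w}(\t)=\ell(w)=m$. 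Note in particular that the Coxeter property of $w$ is never needed for the lower bound --- only finiteness of the $\s$-support --- so your insistence on a twisted Coxeter element is what creates the admissibility difficulty in the first place.
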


We can characterize the cases that are of Coxeter type as precisely those cases where equality holds in the previous theorem:

\begin{theorem} {\rm (Theorem~\ref{thm:characterization-coxeter-type})}
Suppose that $\mu$ is non-central in every simple factor of the adjoint group $\bG_{\text{ad}}$ over $F$. The following conditions are equivalent:
	\begin{enumerate}
		\item The triple $(\bG, \mu, K)$ is of Coxeter type;
        \item We have that $\dim X(\mu, \t)_K=\ssrk(\bJ_\t)$;
	    \item For any admissible triple $(\xi, J, K')$ with $K' \supseteq K$, we have that
            \[
                \<\underline{\mu}, 2\rho\> \le \sharp \{\text{$\s$-orbits of $K_\xi'$}\} + \ssrk(\bJ_\t).
            \]
	\end{enumerate}
\end{theorem}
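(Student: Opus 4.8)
The plan is to prove the equivalence $(1)\Leftrightarrow(2)\Leftrightarrow(3)$ in a cycle, using Theorem~\ref{prop:dim-lower-bound} as the basic input. The key point is that each of the three conditions is really a statement about \emph{all} the EKOR strata occurring in $X(\mu,\t)_K$, so I would reduce everything to a stratum-by-stratum analysis. Recall (from the EKOR/BT stratification, Sections~\ref{def-fullyHN}, \ref{subsec:bt-stratification}) that $X(\mu,\t)_K$ is a finite union of locally closed EKOR strata indexed by certain Weyl group elements $w$, and each such stratum is (the perfection of) an iterated fibration over a classical Deligne--Lusztig variety $X_w(b)$ in a finite reductive quotient; its dimension is governed by the length function, more precisely by $\ell(w)$ minus the length of the relevant ``support'' subexpression, together with the fact that the dimension of a classical Deligne--Lusztig variety $X_w$ for $w$ of minimal length in its conjugacy class equals $\ell(w)$ exactly when $w$ is a twisted Coxeter element of the standard parabolic it generates, and is strictly smaller otherwise. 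So for a single stratum indexed by $w$, ``$w$ is a twisted Coxeter element in a finite standard parabolic'' is equivalent to the dimension of that stratum attaining a combinatorial maximum, namely the number of $\s$-orbits in the support of $w$ (equivalently, $\ssrk$ of a centralizer group attached to $w$).

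For $(1)\Rightarrow(2)$: if every EKOR stratum in $X(\mu,\t)_K$ is of twisted Coxeter type, then each stratum has dimension equal to the number of $\s$-orbits in its support, which is bounded above by $\ssrk(\bJ_\t)$; the maximal stratum — the one whose support, after the identifications coming from $\s$-conjugacy by $\t$, is all of the relevant root system — realizes $\dim = \ssrk(\bJ_\t)$, giving the upper bound, while Theorem~\ref{prop:dim-lower-bound} gives the matching lower bound. For $(2)\Rightarrow(1)$: conversely, suppose $\dim X(\mu,\t)_K = \ssrk(\bJ_\t)$. By the lower bound theorem the relevant maximal EKOR stratum already contributes dimension $\ssrk(\bJ_\t)$, but for a stratum indexed by $w$ one always has $\dim \le \#\{\text{$\s$-orbits of }\supp(w)\} \le \ssrk(\bJ_\t)$, with the first inequality strict unless $w$ is twisted Coxeter on its support. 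So equality forces every stratum contributing maximal dimension to be of twisted Coxeter type; one then has to propagate this to \emph{all} strata, using that $X(\mu,\t)_K$ is equidimensional (or that its closure relations force lower strata to sit in the boundary of the maximal ones) together with a monotonicity argument on supports — a stratum whose support is strictly smaller, but which fails to be twisted Coxeter on that smaller support, would still have to appear, and one must rule this out by the classification/combinatorics of which $w$ actually occur in the $\mu$-admissible set.

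For the equivalence with $(3)$: the quantity $\<\underline{\mu},2\rho\>$ is the ``expected dimension'' $=\dim X(\mu,\t)_{\brI}$ at Iwahori level, and for a general level $K$ one expects $\dim X(\mu,\t)_K = \<\underline{\mu},2\rho\> - \#\{\text{$\s$-orbits of }K_\xi'\}$ for the admissible triple $(\xi,J,K')$ realizing the top stratum — this is the content of the dimension formula for EKOR strata after passing to a maximal such $K'$. So condition $(3)$, asked for the \emph{specific} admissible triple attached to the top stratum, is literally a rearrangement of $(2)$; asking it for \emph{all} admissible triples $(\xi,J,K')$ with $K'\supseteq K$ is then the ``uniform'' version, and one checks that the strongest constraint is exactly the one coming from the top stratum, so the two are equivalent. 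Here I would invoke the combinatorial identity $\ssrk(\bJ_\t) + \#\{\text{$\s$-orbits of }K'_\xi\} = \#\{\text{$\s$-orbits in the full diagram}\}$ type of bookkeeping, so that $(3)$ becomes ``the support of the top $w$ is everything and $w$ is twisted Coxeter there,'' matching $(1)$.

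\textbf{Main obstacle.} The delicate step is $(2)\Rightarrow(1)$: going from a single dimension equality for the whole variety to a statement about \emph{every} EKOR stratum. A priori the dimension could be achieved by one well-behaved stratum while some other, lower-dimensional stratum is indexed by a non-Coxeter element. Ruling this out requires knowing enough about the poset of EKOR strata in $X(\mu,\t)_K$ — in particular that every stratum is in the closure of a top-dimensional one and that the support function is monotone along this poset — so that a failure of the twisted-Coxeter property at any stratum would already obstruct it at a maximal one. This is where the detailed structure theory of the $\KAdm(\mu)$ set and the reduction method behind the EKOR stratification (Deligne--Lusztig reduction, the ``$P$-alcove'' / support arguments of~\cite{GHN}) have to be brought in, and it is the technical heart of the argument; the rest is essentially bookkeeping with root systems, $\s$-orbits, and length functions.
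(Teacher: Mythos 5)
There is a genuine gap, concentrated exactly where you flag the ``main obstacle'': the implication $(2)\Rightarrow(1)$ (equivalently $(3)\Rightarrow(1)$). Your proposed mechanism rests on a stratum-wise inequality stated in the wrong direction. Proposition~\ref{supp} gives a \emph{lower} bound $\dim X_w(\t)\ge \sharp\{\text{$\s$-orbits on }\supp_\s(w)\}$, and for $w$ with finite $\s$-support one has $\dim X_w(\t)=\ell(w)\ge\sharp\{\text{orbits}\}$ with equality precisely in the twisted Coxeter case; you assert instead an upper bound $\dim\le\sharp\{\text{orbits of }\supp(w)\}$ that is ``strict unless $w$ is twisted Coxeter.'' With the inequality corrected, the hypothesis $\dim X(\mu,\t)_K=\ssrk(\bJ_\t)$ only yields $\ell(w)\le\ssrk(\bJ_\t)$ for each occurring $w$, which by itself does not force any individual $w$ to be a twisted Coxeter element. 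Your fallback — equidimensionality of $X(\mu,\t)_K$ and monotonicity of supports along closures — is neither established nor true in the needed generality, and you ultimately defer to ``the classification/combinatorics of which $w$ actually occur,'' which is precisely the content that must be supplied. The paper supplies it by a complete classification: condition $(3)$ (for \emph{all} admissible triples, not just a ``top'' one) yields the root-theoretic inequalities (b), (c) of Corollary~\ref{ineq-2}, which cut the Coxeter data down to a finite list (Step I); further explicit admissible triples exclude the remaining levels $K$ (Step II); and a case-by-case verification shows $\KAdm(\mu)_0=\KCox(\mu)$ in each surviving case (Step III). None of this is replaceable by the soft closure/propagation argument you sketch.

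A secondary inaccuracy: your reading of $(3)$ as ``literally a rearrangement of $(2)$'' via a formula $\dim X(\mu,\t)_K=\<\underline{\mu},2\rho\>-\sharp\{\text{$\s$-orbits of }K'_\xi\}$ is not what happens. The paper's $(2)\Rightarrow(3)$ (Lemmas~\ref{admissible}, \ref{ineq-1}) constructs, for \emph{each} admissible triple, a witness $t^\xi c\in\KAdm(\mu)_0$ with central Newton point and $\ell(t^\xi c)=\<\underline{\mu},2\rho\>-\sharp\{\text{$\s$-orbits of }K'_\xi\}$, and then bounds $\ell(t^\xi c)=\dim X_{t^\xi c}(\t)\le\dim X(\mu,\t)_K$ using full Hodge--Newton decomposability (itself a consequence of $(2)$ via Theorem~\ref{prop:dim-lower-bound}). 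Different admissible triples give genuinely different, and all necessary, constraints; it is false that the ``strongest constraint comes from the top stratum.'' Your direction $(1)\Rightarrow(2)$ is essentially the paper's and is fine.
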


We list Condition~(3) in this theorem to indicate that we have a simple group-theoretic characterization of these cases which involves neither the dimension of $X(\mu, \t)_K$, nor the more subtle combinatorics of the admissible set. For the notation used here, we refer to Section~\ref{sec:classification}. This condition allows us to classify all cases of Coxeter type (without using the classification results of~\cite{GH}, \cite{GHN}).

\begin{theorem}[Theorem~\ref{thm:characterization-coxeter-type}, Table~\ref{table:enhanced-coxeter-data}]\label{thm:classification-coxeter-type}
    Assume that $\bG$ is quasi-simple over $F$ and $\mu$ is non-central in every $\breve F$-simple component. Denote by
    $W_a$ the corresponding affine Weyl group, by $\underline{\mu}$ the image of a dominant representative of $\mu$ in the translation lattice of the Iwahori--Weyl group, and by $\s$ the automorphism of
    $W_a$ induced by Frobenius.

    The property whether $(\bG, \mu, K)$ is of Coxeter type depends only on the tuple $(W_a, \s, \underline{\mu}, K)$.

    The quadruples $(W_a, \s, \underline{\mu}, K)$ of Coxeter type with $K$ minimal
    are classified as follows (up to isomorphism, see Section~\ref{notation-autom-dynkin} for the notation):

    \bigskip
    \begin{enumerate}[itemsep=2mm]
        \item[(i)]
            $(\tilde A_{n-1}, \id, \o^\vee_1, \emptyset)$,
        \item[(ii)]
            $(\tilde A_{n-1}, \varrho_{n-1}, \o^\vee_1, \emptyset)$,
        \item[(iii)]
            $(\tilde A_{2 m}, \varsigma_0, \o^\vee_1, \tilde \BS-\{s_0\})$,\\[2mm]
            $(\tilde A_{2 m+1}, \varsigma_0, \o^\vee_1, \tilde \BS-\{s_0, s_{m+1}\})$,
        \item[(iv)]
            $(\tilde A_{n-1}, \id, \o^\vee_1+\o^\vee_{n-1}, \tilde \BS-\{s_0\})$, \quad $n \ge 3$,\\[2mm]
            $(\tilde A_{n-1} \times \tilde A_{n-1}, {}^1 \varsigma_0, (\o_1^\vee, \o_{n-1}^\vee), \sqcup_{i=1}^2(\tS_i-\{0\}))$,
        \item[(v)]
            $(\tilde B_n, \id, \o^\vee_1, \tilde \BS-\{s_0, s_n\})$,\\[2mm]
            $(\tilde B_n, \Ad(\t_1), \o^\vee_1, \tilde \BS-\{s_n\})$,
        \item[(vi)]
            $(\tilde C_n, \id, \o^\vee_1, \tilde \BS-\{s_0, s_n\})$,
        \item[(vii)]
            $(\tilde D_n, \id, \o^\vee_1, \tilde \BS-\{s_0, s_n\})$,\\[2mm]
            $(\tilde D_n, \varsigma_0, \o^\vee_1, \tilde \BS-\{s_0\})$,
        \item[(viii)]
            Exceptional cases:\\[2mm]
            $(\tilde A_1, \id, 2 \o^\vee_1, \emptyset)$,\quad
            $(\tilde A_3, \id, \o^\vee_2, \{s_1, s_2\})$,\quad
            $(\tilde A_3, \varsigma_0, \o^\vee_2, \tilde \BS-\{s_0\})$,\\[2mm]
            $(\tilde C_2, \id, \o^\vee_2, \{s_0\})$,\quad
            $(\tilde C_2, \Ad(\t_2), \o^\vee_2, \{s_0, s_2\})$.
    \end{enumerate}
\end{theorem}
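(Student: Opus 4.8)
\emph{Reduction to a combinatorial criterion.} The strategy is to reduce everything to the combinatorial condition supplied by Theorem~\ref{thm:characterization-coxeter-type}; the classification results of \cite{GH} and \cite{GHN} are not used. Under the standing hypotheses (which are implied by those in the statement), Theorem~\ref{thm:characterization-coxeter-type} asserts that $(\bG, \mu, K)$ is of Coxeter type if and only if $\<\underline{\mu}, 2\rho\> \le \sharp \{\text{$\s$-orbits of $K_\xi'$}\} + \ssrk(\bJ_\t)$ for every admissible triple $(\xi, J, K')$ with $K' \supseteq K$. Every quantity occurring here is determined by the tuple $(W_a, \s, \underline{\mu}, K)$ alone: the set of admissible triples and the orbit counts $\sharp\{\text{$\s$-orbits of }K'_\xi\}$ are defined inside $(\tW, \s)$ together with $K$; the integer $\<\underline{\mu}, 2\rho\>$ depends only on $\underline{\mu}$ and the root system underlying $W_a$; and, as recalled in Section~\ref{sec:classification}, $\ssrk(\bJ_\t)$ is read off from $\s$ and the length-zero element determined by $\underline{\mu}$. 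This already proves the first assertion of the theorem.

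\emph{Monotonicity and the shape of the argument.} If $K_1 \subseteq K_2$, then the admissible triples $(\xi, J, K')$ with $K' \supseteq K_2$ form a subfamily of those with $K' \supseteq K_1$, so Coxeter type for $K_1$ implies Coxeter type for $K_2$. Thus, for fixed $(W_a, \s, \underline{\mu})$, the set of levels $K$ for which $(\bG,\mu,K)$ is of Coxeter type is upward closed in the poset of parahoric types, and a minimal such $K$ is well defined. To prove the classification it therefore suffices: (i) for each quadruple listed in Table~\ref{table:enhanced-coxeter-data}, to check the inequality for all admissible triples and to verify that no strictly smaller $\s$-stable parahoric works; and (ii) for every other $(W_a, \s, \underline{\mu})$ with $\underline{\mu}$ non-central in each $\breve F$-simple factor, to exhibit --- for every maximal $\s$-stable proper parahoric $K'$ --- an admissible triple $(\xi, J, K')$ violating the inequality; by upward closedness this rules out all levels $K$ at once.

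\emph{Finiteness of the candidate list.} Fix $W_a$. One has $\ssrk(\bJ_\t) \le \rank W_a$, and $\sharp\{\text{$\s$-orbits of }K'_\xi\}$ is likewise bounded in terms of $\rank W_a$, so evaluating the inequality at any single admissible triple forces $\<\underline{\mu}, 2\rho\> \le C(W_a)$ for an explicit constant. Since $\<\underline{\mu}, 2\rho\>$ grows without bound as $\underline{\mu}$ moves away from the walls, only finitely many dominant $\underline{\mu}$ that are non-central in every $\breve F$-simple factor survive for each affine type; one enumerates them explicitly for the types $\tilde A$, $\tilde B$, $\tilde C$, $\tilde D$ and for the products $\tilde A_{n-1}\times\tilde A_{n-1}$ (and the like) arising from $\Res_{F'/F}$, together with the finitely many admissible Frobenius automorphisms $\s$ (diagram automorphisms of the affine Dynkin diagram, possibly permuting the simple factors; see Section~\ref{notation-autom-dynkin}). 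For the exceptional affine types $\tilde E_6$, $\tilde E_7$, $\tilde E_8$, $\tilde F_4$, $\tilde G_2$, the bound $C(W_a)$ is small enough that only the coweights recorded in item~(viii), or none, pass the test.

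\emph{Verification and the main obstacle.} For each surviving candidate $(W_a, \s, \underline{\mu})$ one computes $\<\underline{\mu}, 2\rho\>$ and $\ssrk(\bJ_\t)$, enumerates the admissible triples $(\xi, J, K')$, and reads off the set of levels $K$ satisfying the inequality; comparing with the table produces the stated minimal $K$, the ``up to isomorphism'' clause absorbing the symmetries among the vertices coming from diagram automorphisms. For a candidate not occurring in the table, one instead produces, at each maximal $\s$-stable proper parahoric, a single admissible triple violating the inequality. The argument is elementary but lengthy; the main obstacle is the combinatorics of the admissible triples and of the $\s$-orbit structure on the sets $K'_\xi$ in the twisted cases --- in particular type $\tilde A$ with the order-two automorphism $\varsigma_0$ and the $\Res_{F'/F}$ products --- and the need to make the non-occurrence step uniform, i.e.\ to be certain that a violating triple always appears already at a maximal parahoric, so that the infinitely many smaller levels need never be examined. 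As a final consistency check one verifies that the resulting list is compatible with, and completes, the partial classifications of \cite{GH} and \cite{GHN}.
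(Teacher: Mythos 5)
Your overall strategy is the paper's own: reduce to the numerical criterion of Theorem~\ref{thm:characterization-coxeter-type}(3), observe that it depends only on $(W_a,\s,\underline{\mu},K)$ and is monotone in $K$, bound $\<\underline{\mu},2\rho\>$ to get a finite candidate list, and then exclude the remaining non-cases by exhibiting violating admissible triples. Two small points of agreement with the paper that you handle correctly: the uniform bound you extract "from any single admissible triple" is exactly the paper's inequalities (b) and (c) of Corollary~\ref{ineq-2}, and your remark that the twisted type-$\tilde A$ and restriction-of-scalars cases are the delicate ones matches the paper's experience (the paper additionally notes that for type $\tilde D$ with $\s=\Ad(\t_1)$ the crude bounds do not suffice and a specific non-trivial admissible triple must be produced; your framework accommodates this, but it is worth flagging that the "single triple" bound alone does not finish Step (I)).

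There is, however, one genuine logical gap. You treat the equivalence "condition (3) $\Leftrightarrow$ Coxeter type" as a black box, so that for each quadruple in Table~\ref{table:enhanced-coxeter-data} it suffices to "check the inequality for all admissible triples." But in the paper the implication $(3)\Rightarrow(1)$ is not available independently of the classification: it is \emph{established by} the classification, namely by computing $\KAdm(\mu)_0$ explicitly for each surviving quadruple and verifying that every element of it is a twisted Coxeter element, i.e.\ $\KAdm(\mu)_0=\KCox(\mu)$ (Step (III) of Section~\ref{sec:classification}, recorded in the right-hand column of Table~\ref{table:enhanced-coxeter-data}). Only the converse direction, $(1)\Rightarrow(2)\Rightarrow(3)$, comes for free from Theorem~\ref{prop:dim-lower-bound} and Lemma~\ref{ineq-1}. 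As written, your verification step shows only that the listed quadruples survive a \emph{necessary} condition; to conclude they are actually of Coxeter type you must add the explicit determination of $\KAdm(\mu)_0$ in each case. Once that is added, the argument closes and coincides with the paper's.
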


It is easy to see (Remark~\ref{KK'}) that whenever $(W_a, \s, \underline{\mu}, K)$ is of Coxeter type and $K \subseteq K'$, then $(W_a, \s, \underline{\mu}, K')$ is of Coxeter type. From the classification,  we also obtain the following, quite surprising result, which does not seem to follow directly from the characterization above: In all cases, there is a \emph{unique} (up to isomorphism) minimal set $K\subset\tS$ such that $(W_a, \s, \underline{\mu}, K)$ is of Coxeter type. Note however that the situation is quite subtle: Starting with a datum $(\bG, \mu)$, the corresponding statement is \emph{not true} (see the $C\text{-}BC_2$ cases in Table~\ref{table:known-new-open}) because isomorphisms of the Dynkin diagram might not be automorphisms of the \emph{oriented} local Dynkin diagram.

In Section~\ref{sec:rz-spaces}, we discuss consequences of our results for Rapoport--Zink spaces. For many of the pairs $(\bG, \mu)$ in our list, a stratification of the reduced special fiber by classical Deligne--Lusztig varieties has already been established. In most cases, these results deal with maximal parahoric level structure. The stratification is called the ``Bruhat--Tits stratification'' because its index set is related to a Bruhat--Tits building.

It is expected --- and known in many cases --- that one can identify the perfection of the special fiber with a generalized affine Deligne--Lusztig variety of the form $X(\mu, b)_K$ (see Section~\ref{sec:rz-spaces}, in particular Property ($\diamondsuit$)). This makes the connection with the group-theoretic results, and the Bruhat--Tits stratification on that side of the story, see Section~\ref{subsec:bt-stratification}.

Proposition~\ref{prop:bt-for-intersection} below allows us to establish a stratification of the special fiber for many non-maximal level structures in cases of Coxeter type, before passing to the perfection. Note that to prove this result, we need to know a priori that $X(\mu, \t)_K$ has a Bruhat--Tits stratification.


\subsection{Comparison with previous results.}

This article can be seen as a continuation of~\cite{GH} where a classification of all cases of Coxeter type was obtained under the following additional (and, a priori, quite restrictive) assumption: $K = \tS -\{v\}$ is the complement of a single element of $\tS$ (and, as above, is assumed to be preserved by $\sigma$, i.e., $v$ is a fix point of $\sigma$).

In this paper we remove this restriction on $K$, and obtain different more conceptual characterizations of the cases of Coxeter type.

In~\cite{GHN} we introduced and studied the notion of \emph{fully Hodge--Newton decomposable} pairs $(\bG, \mu)$ (see Section~\ref{def-fullyHN}), and gave a classification of those cases. We will see below (Corollary~\ref{cor:coxeter-is-fullyHN}) that $(\bG, \mu)$ is fully Hodge--Newton decomposable whenever it is of Coxeter type. Note however that the classification results in the paper at hand do not make use of the classification of fully Hodge--Newton decomposable cases in~\cite{GHN}. While in all fully Hodge--Newton decomposable cases, the space $X(\mu, b)_K$ (for basic $b$) has a stratification into classical Deligne--Lusztig varieties (called the \emph{weak} Bruhat--Tits stratification in Section~\ref{subsec:bt-stratification}), this stratification has additional nice properties in the cases of Coxeter type. For example, as we show in Section~\ref{sec:smoothness}, in the large majority of cases all closures of strata are smooth.

\subsection{Outline of the paper.} We recall some preliminary notions and explain the general setting in Section~\ref{sec:prelim}.
After recalling the method of Deligne--Lusztig reduction in Section~\ref{sec:dl-reduction}, we prove the dimension formula characterizing Coxeter type cases in Section~\ref{sec:dimension-formula}, and prove the classification in Section~\ref{sec:classification}. In Section~\ref{sec:rz-spaces} we discuss consequences for Rapoport--Zink spaces and relate our results to previous work on the side of Shimura varieties and Rapoport--Zink spaces. See Table~\ref{table:known-new-open} in Section~\ref{subsec:known-new-open} for a summary. Finally, in Section~\ref{sec:smoothness} we answer the question which closures of Bruhat--Tits strate are smooth.

\subsection{Acknowledgments.} We would like to thank George Pappas for answering questions on parahoric subgroups, and Chao Li for pointing us to a mistake in~\cite{GH}, Section~7, cf.~Section~\ref{sec:smoothness} below.

\section{Coxeter type}\label{sec:prelim}

\subsection{Setup}\label{sec:preliminaries}

Let $F$ be a non-archimedean local field, fix an algebraic closure $\overline{F}$, denote by $\breve F$ the completion of
its maximal unramified extension $F^{\rm un}\subset \overline{F}$, and by $\s$ the Frobenius
automorphism of $\breve F$ over $F$. We usually think of $F$ being of \emph{mixed
characteristic}, i.e., $F$ is a finite extension of $\mathbb Q_p$. Everything has an \emph{equal-characteristic} counterpart, however, where $F$ is of the form $\mathbb
F_q((t))$, the Laurent series field over a finite field $\mathbb F_q$. In
either case, we denote by $p$ the residue characteristic of $F$.

We fix a connected reductive group $\bG$ over $F$. Write $\brG = \bG(\breve F)$. Let
$\brI \subseteq \brG$ be a $\s$-invariant Iwahori subgroup, our \emph{standard
Iwahori subgroup}, and let $T$ be a maximal torus of $G$ such that the alcove
$\mathfrak a$ corresponding to $\brI$, the \emph{standard alcove} in the
Bruhat-Tits building of $\bG$ over $\breve F$, lies in the apartment attached to
$T_{\breve F}$. Attached to this data, we have the extended affine Weyl group
$\tW$ and the (relative) finite Weyl group $W_0$. We fix a special vertex of the
base alcove and obtain a splitting $\tW = X_* \rtimes W_0$, where
$X_* := X_*(T)_{\Gamma_0}$ denotes the coinvariants of the cocharacter lattice of $T$
with respect to $\Gamma_0 = \mathop{\rm Gal}(\overline{F}/F^{\rm un})$.
See~\cite{Tits:Corvallis}, \cite[\S2]{GHR}.

We denote by $\tS$ the set of simple affine reflections (defined by our base alcove) inside the affine Weyl group $W_a\subseteq \tW$. The Frobenius $\sigma$ acts on $\tS$ (since by assumption the base alcove is fixed by $\s$). Likewise, if $\tau\in \tW$ has length $0$, then it fixes the base alcove and thus acts by conjugation on $\tS$; we denote this action by $\Ad(\t)$. For an element $w = w' \tau \in W_a\tau$, the $\s$-support $\supp_\s(w)$ is the smallest subset of $\tS$ which is $\Ad(\t)\circ\s$-stable and contains all simple affine reflections that occur in a reduced expression for $w'$. The final condition can also be rephrased as $w'\in W_{\supp_\s(w)}$, where for a subset $K\subseteq \tS$ we write $W_K$ for the subgroup of $W_a$ generated by the elements of $K$. We denote by ${}^K \tW$ the set of minimal length representatives of the cosets in $W_K\backslash \tW$.

For $b\in \brG$, we denote by $\bJ_b$ the $\s$-centralizer of $b$, i.e.,
\[
    \bJ_b(F) = \{ g\in \brG;\ g^{-1}b\s(g) = b\}.
\]
If $b$ is understood, we just write $\bJ$ instead of $\bJ_b$.

Below we always work with the unique reduced root system $\Phi$ underlying the relative root system of $\bG$ over $\breve F$ (the \emph{\'echelonnage root system}).

\subsection{(Enhanced) Tits data and Coxeter data}

To specify the classification results below, two types of data typically arise. On the one hand, we will refer to affine Weyl groups together with an automorphism and a coweight; this kind of data we will call \emph{Coxeter data}. On the other hand, we will refer to algebraic groups over $F$ together with a conjugacy class of cocharacters; this is what we will call \emph{Tits data} below (using Tits's translation between isomorphism classes of such data with local Dynkin diagrams). In both cases, we often \emph{enhance} these data by including a ``level structure'', i.e., a subset $K\subset\tS$ with $W_K$ finite, of the set of simple affine reflections. In the group case, this gives rise to a standard parahoric subgroup. Our level structure will always be assumed to be rational, i.e., $K$ is fixed by the automorphism $\sigma$ (the automorphism induced by the Frobenius over $F$ in the group case).

We hope that no confusion will arise between the notions of \emph{Coxeter datum} and of \emph{being of Coxeter type}, the latter being a property that certain Coxeter data have, and others do not --- similarly as some elements of a \emph{Coxeter group} are \emph{Coxeter elements}.

\begin{definition}[{cf.~\cite{HPR}, \cite[\S2.6]{GHR}}]
    \begin{enumerate}
        \item
            A \emph{Coxeter datum} (over $F$) is a tuple $((W_a, \tS), \sigma, \lambda)$ consisting of an affine Coxeter system, a length-preserving automorphism $\sigma$ and a $W_0$-conjugacy class $\lambda$ in $X_*$, the coweight lattice. Here $W_0$ denotes the finite Weyl group of the given affine Coxeter system. An \emph{enhanced Coxeter datum} is a tuple $((W_a, \tS), \sigma, \lambda, K)$ whose first three entries constitute a Coxeter datum and where $K\subsetneq\tS$ is a subset with $\sigma(K)=K$. Below, we often just write $W_a$ instead of $(W_a, \tS)$, or we replace this item by the corresponding affine Dynkin type.
        \item
            A \emph{Tits datum} (over $F$) is a tuple $(\tilde{\Delta}, \sigma, \lambda)$ consisting of an absolute local Dynkin diagram (cf.~\cite{Tits:Corvallis}), a diagram automorphism and a $W_0$-conjugacy class $\lambda$ in the coweight lattice $X_*$. An \emph{enhanced Tits datum} is a tuple $(\tilde{\Delta}, \sigma, \lambda, K)$ whose first three entries constitute a Tits datum and where $K$ is a type of rational parahoric subgroups in the corresponding group.
    \end{enumerate}
\end{definition}

\subsubsection{Notation for automorphisms of Dynkin diagrams}\label{notation-autom-dynkin}

We use the same labeling of the Coxeter graph as in Bourbaki \cite[Plate I--X]{Bour}. As in~\cite{GHR}, we use the following notation for automorphisms of affine Dynkin diagrams. In case the fundamental coweight $\o^\vee_i$ is minuscule, we denote the corresponding length $0$ element $\tau(t^{\omega^\vee_i})$ by $\t_i$; conjugation by $\t_i$ is a length preserving automorphism of $\tW$ which we denote by $\Ad(\t_i)$. For type $A_n$, the automorphism $\Ad(\t_i)$ is the rotation of the affine Dynkin diagram by $i$ steps (i.e., $s_0$ is mapped to $s_i$, $s_1$ is mapped to $s_{i+1}$, and so on), and we also denote it by $\varrho_i$.  We write $\varsigma_0$ for the automorphism which fixes the vertex $0$, and is the unique nontrivial diagram automorphism of the finite Dynkin diagram, if $W_0$ is of type $A_n, D_n$ (with $n \ge 5$) or $E_6$. For type $D_4$, we also denote by $\varsigma_0$ the diagram automorphism which interchanges $\a_3$ and $\a_4$.

For the product $\tilde{A}_{n-1}\times \tilde{A}_{n-1}$, we denote by ${}^1\varsigma_0$ the automorphism which switches the two factors.

\subsection{Fully Hodge--Newton decomposable pairs $(\bG, \mu)$}\label{def-fullyHN}

We now fix a conjugacy class $\mu$ of cocharacters $\mathbf G_{m, \overline{F}}\to \bG_{\overline{F}}$ over the algebraic closure $\overline{F}$ of $F$. We denote by $\mu_+\in X_*(T)$ the dominant representative of this conjugacy class, and by $\underline{\mu}$ the image of $\mu_+$ in the coweight lattice $X_* = X_*(T)_{\Gamma_0}$, i.e., the translation lattice of the Iwahori--Weyl group. Cf.~\cite[\S2.2]{GHR}.

We also fix a (representative in $\brG$ of a) length $0$ element $\t\in\tW$ whose $\s$-conjugacy class is the unique basic element in $B(\bG, \mu)$.

Denote by $X_w(b)$ the affine Deligne--Lusztig variety for $w\in\tW$ and $b\in\brG$, a subvariety of the affine flag variety for $\bG$, $X_w(b) = \{g\brI\in\brG/\brI;\ g^{-1}b\s(g)\in \brI w\brI\}$.

Let $\pi = \pi_K\colon \brG/\brI \to \brG/\brK$ denote the projection from the affine flag variety to the partial affine flag variety of level $K$ ($\brK$ denotes the standard parahoric subgroup of type $K$). Recall that
\[
    \Adm(\mu) = \{ w\in\tW;\ w \le t^{x(\underline{\mu})}\text{\ for some $x\in W_0$} \}
\]
denotes the $\mu$-admissible set. By definition,
\[
    X(\mu, \t)_K = \{ g\brK \in \brG/\brK;\ g^{-1}\t\s(g) \in \brK\Adm(\mu)\brK\}.
\]

We write $\KAdm(\mu) = \Adm(\mu)\cap {}^K\tW$ for the subset of $\Adm(\mu)$ consisting of all elements which are of minimal length in their right $W_K$-coset. Below, we sometimes write $X_{K, w}(\t) = \pi(X_w(\t))$.

It is shown in~\cite{He:KRconj}, \cite{GH} that
\[
X(\mu, \t)_K = \bigsqcup_{w\in \KAdm(\mu)} \pi(X_w(\t)).
\]
The subsets $\pi(X_w(\t))$ are called the \emph{EKOR strata} of $X(\mu, \t)_K$.

Let us recall the following characterization of fully Hodge--Newton decomposable pairs $(\bG, \mu)$ (see~\cite[Def.~3.1]{GHN}).

\begin{theorem} {\rm (\cite[Thm.~B]{GHN})}\label{fully-HN-dec}
    The pair $(\bG, \mu)$ is \emph{fully Hodge--Newton decomposable}, if the following equivalent conditions are satisfied:
    \begin{enumerate}
        \item
            The coweight $\mu$ is minute {\rm (\cite[Def.~3.2]{GHN})}.
        \item
            For every $w \in \KAdm(\mu)$ with $X_w(\t) \ne\emptyset$, we
            have that $W_{\supp_\s(w)}$ is finite.
    \end{enumerate}
\end{theorem}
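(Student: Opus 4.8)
This statement is \cite[Thm.~B]{GHN}; I sketch the argument one would give. I would begin with two reductions. First, ``$\mu$ minute'', condition~(2), and ``$(\bG,\mu)$ fully Hodge--Newton decomposable'' are all compatible with products and with passage to the adjoint group, so one may assume $\bG$ is adjoint and quasi-simple over $F$. Then $\tS$ is a single $\s$-orbit of connected affine Dynkin diagrams, so for $\s$-stable $J\subseteq\tS$ the group $W_J$ is infinite iff $J=\tS$; in particular $W_{\supp_\s(w)}$ is infinite iff $\supp_\s(w)=\tS$. Second, since $\KAdm(\mu)\subseteq\Adm(\mu)$ for every $K$, it suffices to treat Iwahori level $K=\emptyset$ (the general $K$ being handled similarly). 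So the goal is to show that $\mu$ is minute if and only if
\[
(\ast)\qquad \text{no } w\in\Adm(\mu) \text{ with } \supp_\s(w)=\tS \text{ has } X_w(\t)\neq\emptyset .
\]

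The geometric ingredient I would isolate: for $w\in\Adm(\mu)$ the Kottwitz invariant $\kappa(w)$ equals $\kappa(\t)$, since both lie over the basic class in $B(\bG,\mu)$; so the only possible obstruction to nonemptiness of $X_w(\t)$ is of Newton type. Using the nonemptiness theory for affine Deligne--Lusztig varieties in the affine flag variety (see \cite{GHN} and the references therein) together with the Deligne--Lusztig reduction recalled in Section~\ref{sec:dl-reduction}, I would establish a dichotomy for $w\in\Adm(\mu)$ with $\supp_\s(w)=\tS$: if $w$ is $\s$-straight, then $\ell(w)>0$ forces its $\s$-conjugacy class to have non-central Newton point, hence to be non-basic, so $X_w(\t)=\emptyset$; while for a suitably chosen non-$\s$-straight element $w$ of full $\s$-support (with the --- automatically correct --- Kottwitz invariant) one has $X_w(\t)\neq\emptyset$. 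Hence $(\ast)$ holds once one knows that every full-$\s$-support element of $\Adm(\mu)$ is $\s$-straight, and $(\ast)$ fails once one exhibits a non-$\s$-straight full-$\s$-support element of $\Adm(\mu)$.

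It then remains --- and this is the heart of \cite{GHN} --- to match these two conditions with minuteness, through the explicit description of the $\mu$-admissible set. For ``$\mu$ minute $\Rightarrow$ every full-$\s$-support $w\in\Adm(\mu)$ is $\s$-straight'': minuteness limits the length available below the translations $t^{x(\underline\mu)}$ ($x\in W_0$) so tightly that a full-$\s$-support element of $\Adm(\mu)$ can only be one of these translations or a $\s$-twisted Coxeter element of $W_a$ (a product of one simple reflection from each $\s$-orbit of $\tS$); and both the $t^{x(\underline\mu)}$ and the $\s$-twisted Coxeter elements are $\s$-straight. For the converse, ``$\mu$ not minute $\Rightarrow$ there is a non-$\s$-straight full-$\s$-support element of $\Adm(\mu)$'': the failure of the minuteness inequality of \cite[Def.~3.2]{GHN} leaves just enough room below some $t^{x(\underline\mu)}$ to write down a full-$\s$-support element that repeats a simple reflection, and such an element is not $\s$-straight. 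Finally, to link with Definition~3.1 of \cite{GHN}: minuteness is precisely the numerical condition forbidding any non-basic class in $B(\bG,\mu)$ from being too large to admit a Hodge--Newton decomposition, so $\mu$ minute is equivalent to every non-basic $[b]\in B(\bG,\mu)$ being Hodge--Newton decomposable with respect to a proper Levi subgroup.

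The main obstacle I anticipate lies in this combinatorial step. Both the construction of a non-$\s$-straight full-$\s$-support element of $\Adm(\mu)$ in the non-minute case, and the proof that in the minute case the full-$\s$-support elements of $\Adm(\mu)$ must be translations or twisted Coxeter elements, seem to require a case analysis over the (finitely many) affine Dynkin types, using that $\mu$ is non-central in each factor. By contrast, the geometric half --- the straight/non-straight dichotomy for nonemptiness of $X_w(\t)$ --- is comparatively routine given the reduction method of Section~\ref{sec:dl-reduction}.
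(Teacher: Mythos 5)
First, a point of context: the paper does not prove this statement. Theorem~\ref{fully-HN-dec} is imported verbatim from \cite[Thm.~B]{GHN}, so the only available comparison is with the argument given there. Your preliminary reductions (to $\bG$ adjoint and quasi-simple over $F$, and the observation that ``$W_{\supp_\s(w)}$ infinite'' then becomes ``$\supp_\s(w)=\tS$'') do match \cite{GHN}, as does your identification of $\s$-straightness and Deligne--Lusztig reduction as the geometric inputs.

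The genuine gap is the ``dichotomy'' you declare comparatively routine. It is not a consequence of the reduction method of Section~\ref{sec:dl-reduction} that a non-$\s$-straight element of full $\s$-support with the correct Kottwitz invariant has $X_w(\t)\ne\emptyset$: the reduction can terminate at an element that is of minimal length in its $\s$-conjugacy class without being $\s$-straight and whose class in $B(\bG)$ is still non-basic, and then Theorem~\ref{reduction}(iii) gives $X_w(\t)=\emptyset$. Concretely, in $\tilde A_2$ with $\s=\id$ the element $w=t^{\o_1^\vee}s_2$ has full $\s$-support (its $W_a$-part is nontrivial and the only nonempty $\Ad(\t_1)$-stable subset of $\tS$ is $\tS$), the same Kottwitz invariant as $\t_1$, is of minimal length $3$ in its conjugacy class but not straight ($\<\o_1^\vee,2\rho\>=2<3$), and $X_w(\t_1)=\emptyset$ because its Newton point $\o_1^\vee$ is non-central. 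So ``the only obstruction is of Newton type'' and ``non-straight $\Rightarrow$ nonempty'' are both false as stated; the nonemptiness statement you actually need is the nonemptiness criterion for basic $b$ at Iwahori level (He, G\"ortz--He--Nie), which is one of the substantial external inputs to \cite{GHN}, not a byproduct of Theorem~\ref{reduction}. Symmetrically, the claim that minuteness forces every full-$\s$-support element of $\Adm(\mu)$ to be a translation or an affine twisted Coxeter element is asserted without proof, and \cite{GHN} does not argue this way: there the minute coweights are classified first, and for each entry of the classification the set $\KAdm(\mu)$ and the required emptiness/nonemptiness statements are verified case by case; the equivalence with the actual definition of full Hodge--Newton decomposability (every non-basic class in $B(\bG,\mu)$ is HN-decomposable), which your sketch compresses into one sentence, is a separate argument there. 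In short, the two assertions your outline leans on are precisely the nontrivial content of the theorem and are left unestablished.
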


Note that this property, as shown by condition~(1), is independent of $K$, and depends only on the Coxeter datum $(W_a, \s, \underline{\mu})$. Set $$\KAdm(\mu)_0=\{w \in \Adm(\mu) \cap {}^K \tW; W_{\supp_\s(w)} \text{ is finite}\}.$$ Then we can rewrite condition (2) as
\[
X(\mu, \t)_K = \bigsqcup_{w\in \KAdm(\mu)_0} \pi(X_w(\t)).
\]

\begin{definition}
    We say an element $w \in W_a \t$ is a \emph{$\s$-Coxeter element} (or is
    a \emph{twisted Coxeter element}) if from each $\Ad(\t) \circ \s$-orbit on $\tilde
    \BS$ at most one simple reflection appears in some (or equivalently, any)
    reduced expression of $w \t \i$.
\end{definition}

We denote by $\KCox(\mu) \subset \KAdm(\mu)_0$ the subset of $\KAdm(\mu)_0$ consisting of $\s$-Coxeter elements.\footnote{This set was denoted $\mathop{\rm EO}^K_{\sigma, {\rm cox}}$ in~\cite{GH}.} If $K=\emptyset$, then we may simply omit the superscript and write $\Adm(\mu)_0$ and $\Cox(\mu)$. We define

\begin{definition}\label{def:coxeter-type}
    We say that the triple $(\bG, \mu, K)$ or the corresponding quadruple $(W_a, \mu, \s, K)$ is \emph{of Coxeter type} if $(W_a, \s, \mu)$ is fully Hodge-Newton decomposable and $\KCox(\mu)=\KAdm(\mu)_0$.
\end{definition}

Note that this is the same definition as given in the introduction. Furthermore,
this definition is equivalent to the one in~\cite[\S5.1]{GH}.
In fact, the definition in loc.~cit.~is easily seen to be equivalent to saying
that
\[
    X(\mu, b)_K = \bigsqcup_{w\in \KCox(\mu)} \pi(X_w(b)).
\]
The equivalence of the two definitions then follows from~(2) in Theorem~\ref{fully-HN-dec}.

\begin{remark}\label{KK'}
    Let $K \subset K'$ be proper $\s$-stable subsets of $\tilde \BS$. If $(W_a, \s, \mu, K)$ is of Coxeter type, then  $\KCox(\mu)=\KAdm(\mu)_0$ and
    \[
        {}^{K'}\Cox(\mu)=\KCox(\mu) \cap {}^{K'} \tW=\KAdm(\mu)_0 \cap {}^{K'} \tW={}^{K'}\!\Adm(\mu)_0.
    \]
    Hence $(W_a, \s, \mu, K')$ is of Coxeter type.
\end{remark}




\subsection{The Bruhat--Tits stratification}\label{subsec:bt-stratification}

Let $(\bG, \mu)$ be fully Hodge--Newton decomposable. As pointed out above, we have
\[
X(\mu, \t)_K = \bigsqcup_{w\in \KAdm(\mu)_0} \pi(X_w(\t)),
\]
where as before $\pi\colon \brG/\brI \to \brG/\brK$ denotes the projection from the full affine flag variety to the partial affine flag variety of type $K$.

For each $w\in \KAdm(\mu)_0$, we define the following standard parahoric subgroups. Let $\mathcal P^\flat_w$ be the standard parahoric subgroup generated by $\supp_\s(w)$.  Let $\mathcal P_w$ be the standard parahoric generated by $\supp_\s(w)$ and $I(K, w, \s)$. Here $I(K, w, \s)$ is the maximal $\Ad(w)\circ\s$-stable subset of $K$. Then~\cite[Prop.~5.7]{GHN} shows that $W_{\supp_\s(w) \cup I(K, w, \s)}$ is finite.
Finally, let $\brK\subset \brG$ be the standard parahoric subgroup of type $K$, and let $\mathcal Q_w := \mathcal P_w^\flat \cap \brK$ be the intersection. 

We then have
\[
    \pi(X_w(\tau)) = \bigsqcup_{j\in \bJ(F)/(\bJ(F)\cap \mathcal P_w)} jY(w),
\]
where
\[
    Y(w) = \{ g\in \mathcal P_w^\flat / \mathcal Q_w;\ g\i \t\s(g)\in \mathcal Q_w \cdot_\s (\brI w \brI) \}.
\]
We can identify $\mathcal P_w^\flat / \mathcal Q_w$ with a classical partial flag variety for the maximal reductive quotient of the special fiber of the parahoric group scheme attached to $\mathcal P_w^\flat$. The variety $Y(w)$ is a ``fine Deligne--Lusztig variety'' in there, i.e., the image of a classical Deligne--Lusztig variety in the corresponding full flag variety.

More precisely, in the mixed characteristic case, $Y(w)$ is the perfection of a classical Deligne--Lusztig variety; even though we sometimes drop the adjective perfect for notational convenience, we do not have an actual scheme structure on $X(\mu, \t)_K$ and therefore cannot talk about the strata as usual schemes, if $F$ has mixed characteristic.

If $w$ is a twisted Coxeter element, then we have the following simple description of the set $I(K, w, \s)$ which appears in the definition of $\mathcal P_w$.

\begin{lemma}\label{descr-IKws}
    Suppose that $w$ is a $\s$-Coxeter element in $W_{\supp_\s(w)}$. Then $I(K,w, \s)$ is the set of all $s\in \tS - \supp_\s(w)$ with the following two properties:
    \begin{enumerate}
        \item[(a)] $s$ commutes with every element of $\supp_\s(w)$, and
        \item[(b)] the $\Ad(\t)\circ\s$-orbit of $s$ is contained in $K$.
    \end{enumerate}
\end{lemma}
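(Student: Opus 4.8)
Recall that $I(K,w,\s)$ is by definition the maximal $\Ad(w)\circ\s$-stable subset of $K$. Since $w = w'\t$ with $w'$ a $\s$-Coxeter element in $W_{\supp_\s(w)}$, we have $\Ad(w) = \Ad(w')\circ\Ad(\t)$, so $I(K,w,\s)$ is the maximal subset of $K$ that is stable under $\Ad(w')\circ\Ad(\t)\circ\s$. The plan is to show that a subset $J \subseteq K$ is $\Ad(w')\circ(\Ad(\t)\circ\s)$-stable if and only if it is $(\Ad(\t)\circ\s)$-stable, disjoint from $\supp_\s(w)$, and consists of reflections commuting with $\supp_\s(w)$; from this the description of the maximal such $J$ follows immediately by taking the union, and one checks it equals the set described by (a) and (b).

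**The key step.** First I would prove the ``only if'' direction. Suppose $J \subseteq K$ is $\Ad(w')\circ(\Ad(\t)\circ\s)$-stable. Write $J = J_1 \sqcup J_2$ where $J_1 = J \cap \supp_\s(w)$ and $J_2 = J - \supp_\s(w)$. The central claim is that $J_1 = \emptyset$. Here is where the Coxeter hypothesis is essential: since $w'$ is a $\s$-Coxeter element in $W_{\supp_\s(w)}$, a reduced expression for $w'$ uses each $(\Ad(\t)\circ\s)$-orbit in $\supp_\s(w)$ at most once, and $\supp_\s(w)$ is a single $(\Ad(\t)\circ\s)$-orbit's worth of ``connected'' support — more precisely, no proper $(\Ad(\t)\circ\s)$-stable subset $K' \subsetneq \supp_\s(w)$ can have the property that $w' \in W_{K'}$, and one shows that for a twisted Coxeter element the action of $\Ad(w')$ on $\tS$ does not preserve any nonempty subset of $\supp_\s(w)$ that would close up to a proper subset under $\Ad(\t)\circ\s$. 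Concretely: if $s \in J_1$, then applying $\Ad(w')\circ(\Ad(\t)\circ\s)$ repeatedly keeps us inside $\supp_\s(w)$, and one argues (using that $w'$ is a product of distinct simple reflections, one per twisted orbit, whose supports generate an irreducible-after-closure diagram) that the orbit of $s$ under this composed map exhausts all of $\supp_\s(w)$ — but then $J \supseteq \supp_\s(w)$, which would force $W_J$ to contain $w'$ and hence not be finite unless $J = \supp_\s(w)$; since $J \subseteq K$ and $W_K$ is finite while generically $\supp_\s(w) \not\subseteq K$ for $w$ of positive length, and more to the point $J$ stable means $w'$ fixes $J$ pointwise as a set which a Coxeter element on its full support cannot do. I would extract from the literature (e.g.\ standard facts on twisted Coxeter elements, cf.\ the references around \cite{GH}) the precise statement that a twisted Coxeter element has no nonzero fixed vectors on the relevant subspace, equivalently $\Ad(w')$ fixes no nonempty proper subset configuration; this gives $J_1 = \emptyset$. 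Then $J = J_2 \subseteq \tS - \supp_\s(w)$, and for $s \in J_2$ the element $\Ad(w')\circ(\Ad(\t)\circ\s)(s)$ must lie in $J_2$; writing $w'$ as the product of the simple reflections in $\supp_\s(w)$, $\Ad(w')(t) = t$ for any $t \in \tS$ that commutes with all of $\supp_\s(w)$, and conversely if $t \notin \supp_\s(w)$ does \emph{not} commute with some $s_0 \in \supp_\s(w)$ then (because $s_0$ appears exactly once in the reduced word for $w'$) $\Ad(w')(t) \notin \tS$, contradicting stability. Hence every element of $J_2$ commutes with $\supp_\s(w)$, so $\Ad(w')$ acts trivially on $J_2$, and therefore $J = J_2$ is $(\Ad(\t)\circ\s)$-stable and satisfies (a); being a subset of $K$ it satisfies (b).

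**The converse and conclusion.** For the ``if'' direction: if $s \in \tS - \supp_\s(w)$ satisfies (a) and (b), let $J$ be the $(\Ad(\t)\circ\s)$-orbit of $s$, which by (b) lies in $K$. Every element of $J$ is $(\Ad(\t)\circ\s)$-conjugate to $s$, hence also commutes with $\supp_\s(w)$ (the commuting relation is preserved by the diagram automorphism $\Ad(\t)\circ\s$, which permutes $\tS$ and fixes $\supp_\s(w)$ setwise), so $\Ad(w')$ fixes $J$ pointwise; thus $J$ is $\Ad(w')\circ(\Ad(\t)\circ\s)$-stable and contained in $K$, so $J \subseteq I(K,w,\s)$ and in particular $s \in I(K,w,\s)$. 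Combining the two directions: $I(K,w,\s)$, being the union of all such stable subsets, equals the union of the $(\Ad(\t)\circ\s)$-orbits of all $s \in \tS - \supp_\s(w)$ satisfying (a) and (b), which is exactly the set of $s \in \tS - \supp_\s(w)$ satisfying (a) and (b) (the set is already orbit-closed). This completes the proof.

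**Main obstacle.** The delicate point is the claim $J_1 = \emptyset$, i.e., that $\Ad(w')$ together with the twist $\Ad(\t)\circ\s$ admits no nonempty stable subset of $\supp_\s(w)$. This is precisely the ``no eigenvalue $1$'' property of twisted Coxeter elements and needs the combinatorial input that $\supp_\s(w)$, modulo the $(\Ad(\t)\circ\s)$-action, behaves like the Dynkin diagram of an irreducible Coxeter system on which a Coxeter element acts — I expect to need to cite or re-derive this (it is implicit in the theory underlying \cite{GH}), and care is required because $\supp_\s(w)$ need not be $\Ad(\t)\circ\s$-irreducible a priori, only $\Ad(\t)\circ\s$-connected as the support of a single twisted Coxeter element. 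The rest is bookkeeping about which simple reflections $\Ad(w')$ moves, which is routine given that each generator appears exactly once in the reduced word.
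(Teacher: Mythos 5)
Your overall architecture matches the paper's: the ``if'' direction (showing that any $s$ satisfying (a) and (b) lies in $I(K,w,\s)$, via the observation that (a) forces $\Ad(w\t^{-1})$ to fix $s$, so $\Ad(w)\circ\s(s)=\Ad(\t)\circ\s(s)\in K$ by (b), and then induct along the orbit) is exactly the paper's argument, and your deduction that $I(K,w,\s)$ is $\Ad(\t)\circ\s$-stable once the commuting property is known is also the same. The difference is in the ``only if'' direction: the paper does not re-derive the facts that $I(K,w,\s)\cap\supp_\s(w)=\emptyset$ and that every element of $I(K,w,\s)$ commutes with $\supp_\s(w)$; it cites \cite[Lemma~4.6.1]{GH} for precisely these two statements. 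You attempt to prove them from scratch, and this is where your argument has a genuine gap.

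Concretely, your proof that $J_1=J\cap\supp_\s(w)=\emptyset$ does not work as written. First, the claim that ``applying $\Ad(w')\circ(\Ad(\t)\circ\s)$ repeatedly keeps us inside $\supp_\s(w)$'' is false in general: for $t\in\supp_\s(w)$, the element $w'tw'^{-1}$ is a reflection but typically not a simple one, so there is no a priori orbit inside $\supp_\s(w)$; what stability of $J$ gives you is only that the orbit stays in $J\subseteq K$, and deducing anything about $\supp_\s(w)$ from that requires exactly the interplay between $w\in{}^K\tW$ and the Coxeter property that \cite[Lemma~4.6.1]{GH} encodes. Second, your intended contradiction ``$J\supseteq\supp_\s(w)$ would force $W_J$ to be infinite'' is vacuous in this setting: $W_{\supp_\s(w)}$ is finite by hypothesis (this is built into $w\in\KAdm(\mu)_0$), so no contradiction arises. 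The remaining fallback (``a Coxeter element on its full support cannot fix $J$ pointwise'') is an assertion, not an argument. You correctly flag this as the main obstacle and say you would cite it from the theory underlying \cite{GH}; that instinct is right, and it is what the paper does, but as a self-contained proof the proposal is incomplete at its central step. (Your auxiliary computation that $\Ad(w')(t)\notin\tS$ when $t\notin\supp_\s(w)$ fails to commute with some letter of $w'$ is fine and can be made rigorous by tracking coefficients of simple roots, but it only handles $J_2$, not $J_1$.)
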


\begin{proof}
    It is shown in~\cite[Lemma~4.6.1]{GH} that every element $s\in I(K, w, \s)$ commutes with all elements of $\supp_\s(w)$, and that $I(K, w, \s)\cap \supp_\s(w) = \emptyset$. It remains to show that $I(K,w, \s)$ is $\Ad(\t)\circ \s$-stable, and that every element of $\tS - \supp_\s(w)$ which satisfies~(a) and~(b) lies in $I(K, w, \s)$.

    By definition, $I(K, w, \s)$ is $\Ad(w)\circ \s$-stable. It is also $\Ad(w\t^{-1})$-stable by property~(a). It follows that $I(K,w, \s)$ is $\Ad(\t)\circ \s$-stable.

    Now let $s\in \tS - \supp_\s(w)$ such that~(a) and~(b) hold. We need to show that $(\Ad(w)\circ \s)^i (s) \in K$ for all $i\ge 1$. But (a) ensures that $\Ad(w\t^{-1})^{-1}(s) = s$, so $\Ad(w)\circ\s(s) = \Ad(\t)\circ\s(s)$, and this is an element of $K$ by~(b). Since $\Ad(\t)\circ\s(s)$ again satisfies~(a) and~(b), we can apply induction, and the lemma follows.
\end{proof}

In terms of the Dynkin diagram we can express this as saying that $I(K, w, \s)$ is the union of those $\Ad(\t)\circ\s$-orbits in $K$ in which no element is connected to any vertex in $\supp_\s(w)$.

In particular, this implies that the projection $\pi$ restricts to an isomorphism from $\{ g\brI;\ g\in {\mathcal P}^\flat_w, g^{-1}\tau\s(g)\in \brI w\brI\}$ onto its image $Y(w)$. So $Y(w)$ is isomorphic to the classical Deligne--Lusztig variety attached to $w\t^{-1}$ in the (finite-dimensional) flag variety $\mathcal P^\flat_w/\brI$, for the Frobenius given by $\Ad(\t)\circ \s$.

See also~\cite[Cor.~4.6.2, Section~7.2]{GH}, cf.~also~\cite[Section~5.10]{GHN} for further details.

Putting together these stratifications for all the different $w$, we obtain the decomposition of $X(\mu, \t)_K$ as a union of classical Deligne--Lusztig varieties ``in a natural way''. We call this stratification the \emph{weak Bruhat--Tits stratification}.

The closure of the stratum $Y(w)$ (and likewise of $jY(w)$) in $X(\mu, \t)_K$ is isomorphic to (the perfection of) the closure of $Y(w)$ inside $\mathcal P_w^\flat / (\mathcal P_w^\flat \cap \brK)$. Even if $w$ is a twisted Coxeter element, then this closure is typically not isomorphic to the closure of $Y(w)$ inside $\mathcal P_w^\flat /\brI$.

The closure relations between strata are given as follows (cf.~\cite[Sections 3.3 and 7]{GH}). The closure of a stratum is a union of strata. The closure of a stratum $j Y(w)$ contains a stratum $j' Y(w')$ if and only if
\begin{enumerate}
    \item
$w' \le_{K, \s}w$ which means by definition that there exists $u\in W_K$ such that $u^{-1} w' \s(u)\le w$, and
\item
    $j'(\bJ(F)\cap \mathcal P_{w'}) \cap j(\bJ(F)\cap \mathcal P_w) \ne \emptyset$.
\end{enumerate}
We can express the second condition in an equivalent way in terms of the building, as follows (see~\cite[Erratum, Prop.~7.2.2]{GH}). We identify the set
\[
    \{ h\in \bJ(F)/\bJ(F)\cap \mathcal P_w;\ \breve{\kappa}(h) = \breve{\kappa}(j)  \}
\]
with the set of simplices of type $\mathcal P_w$ in the rational building of $\bJ$, and similarly for $w'$. Then~(2) above is equivalent to requiring that $\breve{\kappa}(j') = \breve{\kappa}(j)$ and that the simplices attached to $j$ and $j'$ via the above identification are contained in the closure of some alcove.

If moreover $(\bG, \mu, K)$ is of Coxeter type, then this stratification has further nice properties. 

\begin{proposition}
   Let $(\bG, \mu, K)$ be of Coxeter type.
   Let $w, w' \in \KAdm(\mu)_0$. The following are equivalent:
   \begin{enumerate}
        \item
            $w' \le w$ (where $\le$ denotes the Bruhat order),
        \item
            $w' \le_{K, \s} w$ (where $\le_{K, \s}$ is the partial order arising in the above description of the closure relations between strata),
         \item
            the inclusion $\supp(w'\t\i) \subseteq \supp(w\t\i)$ holds,
        \item
            the inclusion $\supp_\s(w') \subseteq \supp_\s(w)$ holds.
    \end{enumerate}
    In particular, for $w, w' \in \KCox(\mu)$ with $w \ne w'$ we have $\supp_\s(w') \neq \supp_\s(w)$.
\end{proposition}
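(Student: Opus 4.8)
The plan is to prove the cycle $(1)\Rightarrow(3)\Rightarrow(4)\Rightarrow(1)$ together with $(1)\Rightarrow(2)$ and $(2)\Rightarrow(4)$; these give all four equivalences, and the concluding assertion then follows at once. Throughout, the standing hypothesis enters through the identity $\KAdm(\mu)_0=\KCox(\mu)$: every element $w$ that occurs is a $\s$-Coxeter element, so $w\t\i$ is a reduced product of pairwise distinct simple reflections meeting each $\Ad(\t)\circ\s$-orbit in $\tS$ at most once, and $\supp_\s(w)$ is the union of the orbits so met. The two cheap implications are $(1)\Rightarrow(2)$ (take $u=1$) and $(3)\Rightarrow(4)$ ($\supp_\s(\cdot)$ is the $\Ad(\t)\circ\s$-orbit closure of $\supp(\cdot\,\t\i)$, hence monotone).

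For $(1)\Rightarrow(3)$ I would use that the Bruhat order is compatible with right multiplication by the length-zero element $\t\i$: from $w'\le w$ one gets $w'\t\i\le w\t\i$, and then the subword property shows that a reduced word for $w'\t\i$ is a subword of the reduced word $\prod_{s\in\supp(w\t\i)}s$ for $w\t\i$, so $\supp(w'\t\i)\subseteq\supp(w\t\i)$. The same reasoning yields the sharper statement — crucial below — that, $w\t\i$ being a product of distinct simple reflections, $v\le w\t\i$ holds if and only if $v$ equals one of its order-consistent sub-products $s_{i_{j_1}}\cdots s_{i_{j_m}}$ (with $j_1<\cdots<j_m$), a condition strictly stronger than $\supp(v)\subseteq\supp(w\t\i)$; indeed already $s_1s_2\not\le s_2s_1s_3$ when $s_1,s_3$ commute. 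Bridging precisely this gap is what the admissibility hypothesis is for.

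For $(2)\Rightarrow(4)$: given $u\in W_K$ with $u^{-1}w'\s(u)\le w$, right-multiplying by $\t\i$ and applying the sharper subword fact gives $\supp_\s(u^{-1}w'\s(u))\subseteq\supp_\s(w)$. Writing $w'=u\,(u^{-1}w'\s(u))\,\s(u)\i$ and using $u\in W_K$, the $\s$-stability of $K$, and the finiteness of the relevant standard parabolics (e.g. via \cite[Prop.~5.7]{GHN}), one deduces $\supp_\s(w')\subseteq\supp_\s(w)$. Some care is needed because $\Ad(\t)$ need not stabilise $K$; this is a self-contained combinatorial point about $\s$-supports of twisted conjugates.

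The heart of the proof is $(4)\Rightarrow(1)$, for which the key is a rigidity statement: for $w,w'\in\KAdm(\mu)_0$ with $\supp_\s(w')\subseteq\supp_\s(w)$, the $\s$-Coxeter element $w'\t\i$ is precisely the order-consistent sub-product of $w\t\i$ on the vertex set $\supp(w'\t\i)$; by the sharper subword fact this forces $w'\t\i\le w\t\i$, hence $w'\le w$. The substance of the rigidity is that $\mu$-admissibility ($w\le t^{x(\underline\mu)}$, $w'\le t^{x'(\underline\mu)}$ for suitable $x,x'\in W_0$) determines the ``shape'' of a $\s$-Coxeter element from its $\s$-support, ruling out the ``rotated'' Coxeter words responsible for the failure noted above; after reducing to the quasi-simple, $\breve F$-simple situation, I would try to see this uniformly by analysing which $\s$-Coxeter elements lie below some $t^{x(\underline\mu)}$, falling back if necessary on the finite list of Coxeter data in Theorem~\ref{thm:characterization-coxeter-type}/Table~\ref{table:enhanced-coxeter-data}, where $\KAdm(\mu)_0$ is small and explicit. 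This rigidity, and in particular obtaining it uniformly rather than case by case, is the step I expect to be the main obstacle. Finally, the ``in particular'' clause follows from $(1)\Leftrightarrow(4)$: if $w,w'\in\KCox(\mu)$ have $\supp_\s(w')=\supp_\s(w)$, then $w'\le w$ and $w\le w'$, so $w=w'$.
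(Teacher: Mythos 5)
Your proposal matches the paper's treatment: the paper likewise observes that $(1)\Rightarrow(2)\Rightarrow(4)$ and $(1)\Rightarrow(3)\Rightarrow(4)$ are essentially automatic, and settles the only nontrivial implication $(4)\Rightarrow(1)$ precisely by the case-by-case inspection of the explicit sets $\KAdm(\mu)_0$ from Table~\ref{table:enhanced-coxeter-data} that you name as your fallback. The ``rigidity'' step you isolate is not established uniformly in the paper either --- it is simply read off from the classification --- so your plan is the same argument, presented with somewhat more detail on the easy implications.
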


Note that by definition, we automatically have $(1) \Rightarrow (2) \Rightarrow (4)$ and $(1) \Rightarrow (3) \Rightarrow (4)$. The nontrivial part is $(4) \Rightarrow (1)$, which follows by analyzing all Coxeter type cases in Section~\ref{sec:classification} and the explicit description of $\Cox(\mu)_K$. See Section~\ref{example:drinfeld} for some detailed discussion for the Drinfeld case.

We have the following consequence.

\begin{corollary}\label{ikw}
    Let $(\bG, \mu, K)$ be of Coxeter type.
    The set $\supp_\s(w) \cup I(K, w, \s)$ determines the element $w\in \Cox(\mu)$.
    Hence for  $w, w' \in \KCox(\mu)$ with $w \ne w'$ we have $\bJ(F)\cap \mathcal P_w \ne \bJ(F)\cap \mathcal P_{w'}$.
\end{corollary}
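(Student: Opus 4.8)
The plan is to reduce both assertions to a single combinatorial statement: the map $w \mapsto A_w := \supp_\s(w) \cup I(K, w, \s)$ is injective on $\KCox(\mu)$. Granting this, the second assertion follows at once. Indeed, by construction $\mathcal P_w$ is the standard parahoric subgroup of $\brG$ of type $A_w$, so distinct $w$ yield distinct parahorics $\mathcal P_w$; moreover each $\mathcal P_w$ is stable under $\s' := \Ad(\t) \circ \s$, since $\supp_\s(w)$ is $\s'$-stable by definition of the $\s$-support and $I(K,w,\s)$ is $\s'$-stable by the argument in the proof of Lemma~\ref{descr-IKws}; and the map sending a $\s'$-stable standard parahoric subgroup $\mathcal P$ of $\brG$ to the parahoric subgroup $\bJ(F) \cap \mathcal P = \mathcal P^{\s'}$ of $\bJ(F)$ is injective, by unramified descent in Bruhat--Tits theory. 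So everything comes down to the injectivity of $w \mapsto A_w$.

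For that, I would first invoke the preceding Proposition, whose last sentence already gives that $w \mapsto \supp_\s(w)$ is injective on $\KCox(\mu)$; hence it suffices to reconstruct $\supp_\s(w)$ from the bare set $A_w$, together with the fixed ambient data $\tS$, $K$ and the automorphism $\s'$. The key input here is Lemma~\ref{descr-IKws}: $I(K,w,\s)$ consists precisely of the $s \in \tS \setminus \supp_\s(w)$ that commute with every element of $\supp_\s(w)$ and whose $\s'$-orbit lies in $K$. In terms of the local Dynkin diagram this means $A_w = \supp_\s(w) \sqcup I(K,w,\s)$, with no edge joining the two parts and with both parts $\s'$-stable. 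Consequently every connected component of $A_w$ lies wholly in $\supp_\s(w)$ or wholly in $I(K,w,\s) \subseteq K$, and $\s'$ permutes the components within each part; so $A_w$ breaks canonically into the $\s'$-orbits of its connected components, each such orbit-block lying either in $\supp_\s(w)$ or in $K$.

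The heart of the matter is then the claim that no $\s'$-orbit-block $B$ of connected components of $A_w$ with $B \subseteq \supp_\s(w)$ is contained in $K$. I would prove this by contradiction. Such a $B$ is a union of connected components of $\supp_\s(w)$, so $W_{\supp_\s(w)} = W_B \times W_{\supp_\s(w) \setminus B}$, and I may factor $w\t\i = uv$ with $u \in W_B$, $v \in W_{\supp_\s(w) \setminus B}$ and $\ell(uv) = \ell(u) + \ell(v)$. If $u \ne 1$, a left descent $s \in \supp(u) \subseteq B \subseteq K$ of $u$ gives $\ell(sw) = \ell(su) + \ell(v) = \ell(w) - 1$, contradicting $w \in {}^K\tW$. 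If $u = 1$, then $\supp(w\t\i) \subseteq \supp_\s(w) \setminus B$, which is itself $\s'$-stable, so $\supp_\s(w)$, being the $\s'$-closure of $\supp(w\t\i)$, is contained in $\supp_\s(w) \setminus B$, forcing $B = \emptyset$. Hence $\supp_\s(w)$ equals the union of those $\s'$-orbit-blocks of connected components of $A_w$ which are not contained in $K$; this recovers $\supp_\s(w)$, and with the Proposition it recovers $w$. (One also records that $\supp_\s(w) = \emptyset$ happens only for $w = \t$, because $\t$ is the unique length-$0$ element of $\Adm(\mu)$; for $w = \t$ we have $A_\t = I(K,\t,\s) \subseteq K$, which the reconstruction formula correctly reads as $\supp_\s(\t) = \emptyset$.)

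I expect the main obstacle to be exactly this last step --- extracting the $\s$-support from the undifferentiated subset $A_w \subseteq \tS$. The reason one is forced to work with full $\s'$-orbit-blocks rather than with individual connected components is that the level $K$ is assumed $\s$-stable but not $\s'$-stable, so a single component of $\supp_\s(w)$ might by accident lie inside $K$ even when its $\s'$-orbit does not; the length/descent argument only applies to the orbit-block as a whole. The other ingredients --- the Bruhat--Tits descent statement in the first paragraph and the injectivity of $\supp_\s$ from the preceding Proposition --- are standard or already in hand.
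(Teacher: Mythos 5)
Your proof is correct and follows the same overall strategy as the paper's: reduce everything to recovering $\supp_\s(w)$ from the set $A_w=\supp_\s(w)\cup I(K,w,\s)$, then invoke the injectivity of $w\mapsto\supp_\s(w)$ from the preceding proposition and the description of $I(K,w,\s)$ in Lemma~\ref{descr-IKws}. The difference is in the granularity of the reconstruction step, and here your extra care is not merely cosmetic: the paper asserts that, because $w\in{}^K\tW$, \emph{every connected component} of $\supp_\s(w)$ meets $\tS-K$, and reconstructs $\supp_\s(w)$ as the union of the components of $A_w$ meeting $\tS-K$. As you anticipated, this fails when $K$ is $\s$-stable but not $\Ad(\t)\circ\s$-stable. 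For instance, in the case $(\tilde A_{n-1}\times\tilde A_{n-1},{}^1\varsigma_0,(\o_1^\vee,\o_{n-1}^\vee),\sqcup_i(\tS_i-\{0\}))$ the element $w$ with $w\t\i=(s_0,1)$ lies in ${}^K\tW$ and has $\supp_\s(w)$ equal to the single $\Ad(\t)\circ\s$-orbit consisting of the vertex $0$ of the first factor and the vertex $n-1$ of the second factor; the latter is a connected component of $\supp_\s(w)$ entirely contained in $K$, so the componentwise recipe would drop it. Your formulation at the level of $\Ad(\t)\circ\s$-orbit-blocks of components, together with the two-case argument (a nontrivial factor supported on the block produces a left descent in $K$, contradicting $w\in{}^K\tW$; a trivial factor contradicts the minimality of the $\s$-support), is exactly the correct statement and gives a valid reconstruction in all cases. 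Your explicit justification of the final ``hence'' (distinct $\Ad(\t)\circ\s$-stable types give distinct standard parahorics, and $\mathcal P\mapsto\bJ(F)\cap\mathcal P$ is injective on these by Bruhat--Tits theory for $\bJ$ over $F$) is also fine; the paper leaves this implicit.
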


\begin{proof}
    By the proposition above, $w$ is determined by $\supp_\s(w)$. So it is enough
    to show that we can recover $\supp_\s(w)$ from the union $\supp_\s(w) \cup
    I(K, w, \s)$. Since $w\in {}^K\tW$, every connected component of
    $\supp_\s(w)$ must meet $\tS-K$. On the other hand, by definition we have
    $I(K, w, \s)\subseteq K$. Therefore the $\s$-support of $w$ consists exactly
    of those connected components of the union $\supp_\s(w) \cup I(K, w, \s)$
    which intersect $\tS-K$.
\end{proof}

By Corollary \ref{ikw}, if $(\bG, \mu, K)$ is of Coxeter type, then the index set $\bigsqcup_w \bJ(F)/(\bJ(F)\cap \mathcal P_w)$ of the weak BT stratification can be seen as a subset of the set of all
simplices in the Bruhat--Tits building of $\bJ$ over $F$ (up to fixing the connected component, i.e., the image under $\breve\kappa$). In view of these
particularly favorable properties, we call the resulting stratification the
\emph{Bruhat--Tits stratification of $X(\mu, \t)_K$}.

\section{Some dimension formulas}\label{sec:dimension-formula}
We first prove (in Section~\ref{sec:proof-supp}) the following inequality on the dimension of affine Deligne-Lusztig varieties. As before, $\t$ is a fixed representative of a length $0$ element in $\tW$ whose $\s$-conjugacy class is the basic element in $B(\bG, \mu)$.

\begin{proposition}\label{supp}
    Let $w \in W_a \o$ with $\o \in \Omega$ such that $X_w(\t) \neq \emptyset$. Then \begin{align*} \dim X_w(\t) \ge \sharp \{\text{\rm $(\Ad(\o) \circ \s)$-orbits on $\supp_\s(w)$}\}. \end{align*}
\end{proposition}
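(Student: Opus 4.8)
The natural approach is Deligne--Lusztig reduction applied to a carefully chosen reduced expression for $w\t^{-1}$. Recall that the reduction method (which the paper recalls in Section~\ref{sec:dl-reduction}, and which is the standard tool of \cite{GH}, \cite{He:geometric}) says: if $w = s_\a w_1 s_{\s(\a)}$ (or a one-sided conjugate) with $\ell(w_1) < \ell(w)$, then $X_w(\t)$ is related to $X_{w_1}(\t)$ and $X_{s_\a w_1}(\t)$ by fibrations/closed-open decompositions, with dimension shifts by $1$. To get a \emph{lower} bound on $\dim X_w(\t)$, I want to peel off the $\Ad(\o)\circ\s$-orbits in $\supp_\s(w)$ one at a time, each time either keeping a $\mathbf{G}_m$-bundle (dimension $+1$) or landing in a case where I can induct on a strictly smaller $\s$-support.

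First I would reduce to the case $w \in W_{\supp_\s(w)}\o$, i.e., assume $\supp_\s(w)$ is all of $\tS$, since the affine Deligne--Lusztig variety only sees the subgroup generated by the support. Then I would pick a simple reflection $s = s_\a$ appearing in a reduced expression of $w\t^{-1}$ and consider the $\Ad(\o)\circ\s$-orbit $\mathcal O$ of $s$. The key structural input is a length/support bookkeeping lemma: one can choose a reduced expression so that conjugating by the reflections in $\mathcal O$ either strictly decreases the length (the ``good'' reduction step, producing a $\mathbf{G}_m$-bundle over a lower-dimensional ADLV and hence $\dim X_w(\t) \ge 1 + (\text{something})$) or, if every reduction is length-preserving, one falls into the situation where $w$ is (conjugate to) a $\s$-Coxeter element for which $X_w(\t)$ is already known to be nonempty of the expected dimension $\sharp\{\text{orbits}\}$ — this last fact is essentially \cite[Thm.~4.1.2 or its analogue]{He:geometric} / the main length-positivity results used in \cite{GH}. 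The induction is on $\ell(w)$: after removing one orbit's worth of reflections, the support drops by (at least) that orbit, so by induction $\dim X_{w_1}(\t) \ge \sharp\{\text{orbits on }\supp_\s(w_1)\} = \sharp\{\text{orbits on }\supp_\s(w)\} - 1$, and the $+1$ from the reduction step restores the count.

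The main obstacle is controlling the reduction so that the $\s$-support genuinely decreases by exactly one orbit when we remove the reflections of $\mathcal O$, and simultaneously ensuring nonemptiness is preserved along the chosen branch of the reduction tree. Concretely, if $w = x s_\a$ with $s_\a\notin\supp_\s(x)$ and $\ell(x s_\a) > \ell(x)$ but $\ell(s_{\s(\a)} x s_\a) = \ell(x)$ is not smaller, one does \emph{not} immediately get the $\mathbf{G}_m$-bundle; one must instead use the fact (cf.~\cite[Lemma~2.5]{He:geometric}) that then $X_w(\t) \ne\emptyset$ forces $X_{s_{\s(\a)} x s_\a}(\t) \ne \emptyset$ with equal dimension, and iterate around the orbit $\mathcal O$ until a genuine length drop occurs; the orbit being finite (since $W_{\supp_\s(w)}$ need not be finite in general, but the $\sigma$-orbit structure on $\tS$ is finite) guarantees termination. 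Handling the bookkeeping of which simple reflections survive in the support — in particular that we never accidentally enlarge the support — is where the argument needs care, and is presumably where the authors invoke the partial conjugation / minimal-length-in-$\s$-conjugacy-class technology of He--Nie.

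Finally I would assemble these steps into a clean induction on $\ell(w\t^{-1})$: the base case $w = \o$ gives $X_\o(\t)$ a point and $\supp_\s(w) = \emptyset$, both sides zero; the inductive step peels off one $\Ad(\o)\circ\s$-orbit, applies the reduction lemma to gain a dimension $+1$ (or stays in the Coxeter-type base case where the bound is exact), and invokes the inductive hypothesis on the element of strictly smaller length and strictly smaller $\s$-support. I expect the write-up to reference Section~\ref{sec:dl-reduction} for the precise reduction statements and to be only a few lines once that machinery is in place.
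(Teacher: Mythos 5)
Your proposal follows essentially the same route as the paper's proof: induct on $\ell(w)$, use the He--Nie theorem to conjugate $w$ by length-preserving steps ($\approx_\s$) to an element $u$ admitting a genuine length-decreasing conjugation $u \mapsto s u \s(s)$, gain $+1$ in dimension from Deligne--Lusztig reduction while losing at most the single $\Ad(\o)\circ\s$-orbit of $s$ from the $\s$-support, and apply the induction hypothesis to whichever of $X_{su\s(s)}(\t)$, $X_{su}(\t)$ is nonempty. The one correction: the terminal case of the reduction is not ``$w$ a $\s$-Coxeter element'' but ``$w$ of minimal length in its $\s$-conjugacy class'', where Theorem~\ref{reduction}(iii) gives $\dim X_w(\t)=\ell(w)$ (since $\t$ is basic), and $\ell(w)\ge\sharp\{\text{$(\Ad(\o)\circ\s)$-orbits on }\supp_\s(w)\}$ holds trivially because each orbit contributes at least one letter to any reduced word.
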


\subsection{Deligne-Lusztig reduction}\label{sec:dl-reduction}

We first recall the Deligne-Lusztig reduction method.

Let $x, x' \in \tW$ and $s \in \tS$. We write $x \xrightarrow{s}_\s x'$ if $x' = s x \s(s)$ and $\ell(x') \le \ell(x)$. We write $x \to_\s x'$ if there exists a sequence $x_0, x_1, \dots, x_r$ in $\tW$ and a sequence $s_1, s_2, \dots, s_r$ in $\tS$ such that $x = x_0 \xrightarrow{s_1}_\s x_1 \xrightarrow{s_2}_\s \cdots \xrightarrow{s_r}_\s x_r = x'$. We write $x \approx_\s x'$ if $x \to_\s x'$ and $x' \to_\s x$.

\begin{theorem}~\cite{HN}\label{min}
    For each $x \in \tW$ there exists an element $y \in \tW$ which is of minimal length inside its $\s$-conjugacy class such that $x \to_\s y$;
\end{theorem}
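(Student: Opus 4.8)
\smallskip

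\noindent\textbf{Proof proposal.} The plan is to argue by induction on $\ell(x)$. The base case is when $x$ is already of minimal length in its $\s$-conjugacy class $\mathcal O$, so the whole content is the following local claim: \emph{if $x\in\tW$ is not of minimal length in $\mathcal O$, then there exists $x'\in\tW$ with $x\to_\s x'$ and $\ell(x')<\ell(x)$}. Granting this, apply the inductive hypothesis to $x'$ to obtain $y\in\mathcal O$ of minimal length with $x'\to_\s y$, and concatenate the two reduction sequences using the transitivity of $\to_\s$. Observe that for $s\in\tS$ one has $\ell(sx\s(s))-\ell(x)\in\{-2,0,2\}$, since left and right multiplication by a single simple reflection each change the length by exactly $1$; hence a length-non-increasing sequence that terminates at a strictly shorter element must strictly decrease at some step, and this is exactly what keeps the induction going.

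To prove the local claim I would follow the strategy of Geck--Pfeiffer together with its twisted and affine refinements, organized around the Newton point $\nu=\nu_{\mathcal O}$ of the class (its dominant rational cocharacter). Two cases arise. If $\nu$ is \emph{not} central in $\bG$, let $M=M_\nu$ be the standard Levi whose \'echelonnage coroots are exactly those orthogonal to $\nu$; then $M$ is a proper Levi and $\tW_M\subseteq\tW$ has strictly smaller semisimple rank. A Hodge--Newton type reduction should show that $\mathcal O\cap\tW_M$ is a single $\s$-conjugacy class of $\tW_M$ (with Newton point $\nu$, now central in $M$), that every $x\in\mathcal O$ is connected to $\mathcal O\cap\tW_M$ by a length-non-increasing chain of $\xrightarrow{s}_\s$ steps, and that the minimum of $\ell$ on $\mathcal O$ equals the minimum of $\ell$ on $\mathcal O\cap\tW_M$; one then concludes by an outer induction on the semisimple rank. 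If $\nu$ \emph{is} central in $\bG$, the class is ``essentially finite'': its minimal length elements are torsion, or of length $0$, hence can be $\s$-conjugated into a finite standard parabolic subgroup $W_J$ with $J\subsetneq\tS$, and one checks that this passage can again be done without increasing the length; one then invokes the theorem for the finite Coxeter system $(W_J,J)$ with its induced diagram twist --- Geck--Pfeiffer in the untwisted case, and its extension to twisted conjugacy in general.

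The main obstacle is the Hodge--Newton reduction in the non-central case --- equivalently, the ``straightening'' of $x$ towards a $P$-alcove element for the parabolic attached to $\nu$, performed by $\s$-conjugations that never increase the length --- together with the bookkeeping needed to see that this process is \emph{efficient}, i.e.\ that the length reachable by $\to_\s$ equals the true minimum of $\ell$ on $\mathcal O$. This is a delicate combinatorial analysis of reduced words in the semidirect product $\tW=X_*\rtimes W_0$ and of minimal-length representatives in parabolic double cosets, and is where the real work lies; the finite Coxeter group input serving as the base case is itself nontrivial but may be used as a black box. (Note that a non-constructive argument, e.g.\ via class polynomials in the Iwahori--Hecke algebra, yields only the \emph{existence} of minimal-length elements in $\mathcal O$ and not their reachability from $x$ via $\to_\s$, so the explicit chain construction above cannot be bypassed.)
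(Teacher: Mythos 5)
First, note that the paper does not prove this statement at all: it is quoted verbatim from He--Nie~\cite{HN}, so the only meaningful comparison is with the argument given there. Your outer induction on $\ell(x)$ is fine but carries no content: granting transitivity of $\to_\s$, the ``local claim'' (any non-minimal $x$ admits a length-decreasing $\to_\s$-chain) is literally equivalent to the theorem. Everything therefore rests on your two case claims, and both are asserted rather than proved. In the non-central case, the statement that every $x$ in the class can be brought by \emph{length-non-increasing} $\s$-conjugation steps into $\tW_{M_\nu}$, that the intersection with $\tW_{M_\nu}$ is a single class there, and that the minimum of $\ell$ on the class is computed inside $\tW_{M_\nu}$ (note $\ell$ and $\ell_{M_\nu}$ differ, so even the comparison of minima needs an argument) is precisely the hard Hodge--Newton/$P$-alcove-type reduction; labelling it ``should show'' defers the entire difficulty. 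In the central (basic) case the same problem recurs: a torsion element of $\tW\rtimes\langle\s\rangle$ does fix a point of the apartment, but $\s$-conjugating it into a finite standard parabolic $W_J$ \emph{without increasing length} is again a statement of essentially the same depth as the theorem, not a routine check. So what you have is a plausible reduction strategy, not a proof.

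It is also worth pointing out that this is not the route taken in~\cite{HN}. He and Nie do not reduce to the Geck--Pfeiffer theorem for finite Weyl groups as a black box; their argument is uniform and case-free, based on the partial conjugation method and a geometric analysis in the apartment: to $x$ one attaches the Newton point $\nu_x$ and the nonempty affine subspace of points $v$ with $x\s(v)=v+\nu_x$, and one shows that if the base alcove is not in an optimal position relative to this subspace, then $\s$-conjugation by a suitable simple affine reflection either decreases the length or strictly improves the position, so the process terminates at a minimal length element reachable by $\to_\s$. If you want to salvage your approach, the two deferred claims would have to be established by an argument of comparable substance (they are close to results of G\"ortz--He--Kottwitz--Reuman and He on $P$-alcove elements), at which point you would essentially be reproving the main technical content of~\cite{HN} rather than bypassing it.
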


The following theorem, which is referred to as the reduction \'a la Deligne and Lusztig, is proved in \cite[proof of Theorem 1.6]{DL} (parts~(i) and~(ii)) and \cite[Theorem 4.8]{He}, see also \cite[Corollary 2.5.3]{GH10}.
\begin{theorem} \label{reduction}
    Let $b \in \breve G$. Let $x, x' \in \tW$ such that $x \xrightarrow{s}_\s x'$ for some $s \in \tS$.
   \begin{enumerate}
       \item[(i)]
    if $\ell(x) = \ell(x')$, then $\dim X_x(b) = \dim X_{x'}(b)$;
       \item[(ii)]
    if $\ell(x) > \ell(x')$, then $\dim X_x(b) = 1 + \max \{\dim X_{x'}(b), \dim X_{s x}(b)\}$;
       \item[(iii)]
    if $x$ is of minimal length in its $\s$-conjugacy class, then $X_x(b) \neq \emptyset$ if and only if $\dot x \in [b]$, in which case, $\dim X_x(b) = \ell(x) - \<\bar{\nu}_b, 2\rho\>$, where $\bar{\nu}_b$ denotes the Newton vector of $b$.
   \end{enumerate}
\end{theorem}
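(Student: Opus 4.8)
The plan is to run the affine form of the Deligne--Lusztig reduction \cite[proof of Thm.~1.6]{DL}, by induction on $\ell(x)$, organized around the $\mathbb{P}^1$-bundle $\pi\colon \brG/\brI \longrightarrow \brG/\brP_s$, where $\brP_s := \brI \cup \brI s\brI$ is the minimal standard parahoric attached to $s \in \tS$. Since the base alcove is $\s$-stable, $\s(\brP_s) = \brP_{\s(s)}$, so the locus $\tilde Z := \{ g\brI;\ g\i b\s(g) \in \brP_s x\brP_{\s(s)} \}$ is invariant under right translation by $\brP_s$; hence it is a union of $\pi$-fibers and $\pi$ restricts to a Zariski-locally trivial $\mathbb{P}^1$-bundle $\tilde Z \longrightarrow \bar Z := \pi(\tilde Z)$, so $\dim\tilde Z = 1 + \dim\bar Z$. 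I would first record the combinatorial input: away from the degenerate case $x\i sx = \s(s)$ (in which $x' = x$ and there is nothing to prove), $\brP_s x\brP_{\s(s)}$ is the disjoint union of the four Iwahori double cosets $\brI x\brI$, $\brI sx\brI$, $\brI x\s(s)\brI$, $\brI x'\brI$, and comparing lengths shows that in cases~(i)--(ii) their lengths are $\ell(x')+2$, $\ell(x')+1$, $\ell(x')+1$, $\ell(x')$ in some order --- with $\brI x\brI$ the top cell in case~(ii), and (after arranging $sx > x$) $\brI sx\brI$ the top cell in case~(i). Thus, set-theoretically, $\tilde Z = X_x(b) \sqcup X_{sx}(b) \sqcup X_{x\s(s)}(b) \sqcup X_{x'}(b)$.

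For case~(ii) ($\ell(x') = \ell(x)-2$), $X_x(b)$ is the open stratum of $\tilde Z$. One checks that $x\s(s) \xrightarrow{s}_\s sx$ is a length-preserving step, so by the inductive hypothesis (case~(i) in smaller length) $\dim X_{x\s(s)}(b) = \dim X_{sx}(b)$; then the Deligne--Lusztig fiber-wise analysis of $\pi$ over $\bar Z$ gives $\dim X_x(b) = 1 + \dim\bar Z = 1 + \max\{\dim X_{x'}(b), \dim X_{sx}(b)\}$, together with the emptiness refinement that $X_x(b) = \emptyset$ exactly when $X_{x'}(b) = X_{sx}(b) = \emptyset$. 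For case~(i) ($\ell(x') = \ell(x)$) I would note that the relation $x \xrightarrow{s}_\s x'$ is symmetric and that exactly one of $x, x'$ has $s$ as a left descent; applying the Deligne--Lusztig construction of \cite[1.6]{DL} --- in its affine form, \cite[Thm.~4.8]{He}, \cite[Cor.~2.5.3]{GH10} --- to that element produces an isomorphism $X_x(b) \cong X_{x'}(b)$, in particular $\dim X_x(b) = \dim X_{x'}(b)$.

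For part~(iii), let $x$ be of minimal length in its $\s$-conjugacy class in $\tW$. The assertion $X_x(b) \ne \emptyset \Leftrightarrow \dot x \in [b]$ is equivalent to the statement that every element of $\brI x\brI$ is $\s$-conjugate in $\brG$ to $\dot x$: the implication ``$\Leftarrow$'' is immediate from the definition, while ``$\Rightarrow$'' follows from the minimality of $x$ by analysing the $\brI$-$\s$-conjugation orbits on $\brI x\brI$ (see \cite[Thm.~4.8]{He}, cf.~\cite[Cor.~2.5.3]{GH10}). For the dimension, one may take $b = \dot x$; the same orbit analysis exhibits $X_x(\dot x)$ as a union, indexed by cosets of $\bJ_{\dot x}(F)$, of (perfections of) classical Deligne--Lusztig varieties inside a partial flag variety of the reductive quotient of the parahoric determined by $\supp_\s(x)$, all of dimension $\ell(x) - \langle\bar\nu_b, 2\rho\rangle$.

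The crux --- and the step I expect to be the main obstacle --- is the fiber-wise analysis of $\pi$ over $\bar Z$: tracking, for a given $\mathbb{P}^1$-fiber, which of the four Iwahori cells it meets and in which dimension, and handling the degeneration loci where this generic picture changes. This is precisely what upgrades the soft bound $\dim X_x(b) \le 1 + \dim\bar Z$ to the exact formula in case~(ii) and to the isomorphism in case~(i); part~(iii) additionally relies on the explicit description of $\brI$-$\s$-orbits on minimal-length Bruhat cells that underlies the $\ell(x) - \langle\bar\nu_b, 2\rho\rangle$ count.
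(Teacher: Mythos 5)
A preliminary remark: the paper offers no proof of this theorem --- it is quoted from \cite[proof of Theorem 1.6]{DL} (parts (i), (ii)) and \cite[Theorem 4.8]{He}, cf.\ \cite[Corollary 2.5.3]{GH10} --- so the only meaningful comparison is with those sources, whose reduction framework your set-up does mirror. Measured as a proof, however, the proposal has genuine gaps. The decisive step for (i) and (ii) is exactly the one you defer: the rank-one, fibre-by-fibre analysis of the $\mathbb{P}^1$-bundle $\tilde Z\to\bar Z$ (which of the cells $\brI x\brI$, $\brI sx\brI$, $\brI x\s(s)\brI$, $\brI x'\brI$ a given fibre meets, and in what dimension), which you yourself call ``the crux'' and ``the main obstacle'' and do not carry out. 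Without it, the bundle structure only yields the soft estimate $\dim X_x(b)\le 1+\dim\bar Z$; it gives neither the exact formula in (ii), nor the nonemptiness refinement, nor the equality in (i). For (i) you instead ``apply the Deligne--Lusztig construction \dots\ \cite[Thm.~4.8]{He}, \cite[Cor.~2.5.3]{GH10}'', i.e.\ you invoke the very statements being proved, and the same happens wholesale in (iii). (A small slip that does not affect this: in the non-degenerate length-preserving case the four cosets in $\brP_s x\brP_{\s(s)}$ have lengths $\ell(x')+1,\ \ell(x'),\ \ell(x'),\ \ell(x')-1$, not $\ell(x')+2,\ \ell(x')+1,\ \ell(x')+1,\ \ell(x')$; your list is correct only in case (ii).)

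Part (iii) is the deepest part of the theorem, and your sketch both understates and partly misstates what is needed. The inclusion $\brI \dot x\brI\subseteq[\dot x]$ for $x$ of minimal length in its $\s$-conjugacy class is itself a main theorem of \cite{He}, resting on the He--Nie theory of minimal length elements \cite{HN}; it is not a routine analysis of $\brI$-$\s$-orbits on $\brI x\brI$. For the dimension, your structural description of $X_x(\dot x)$ as a disjoint union of classical Deligne--Lusztig varieties in the flag variety of ``the parahoric determined by $\supp_\s(x)$'' does not make sense in general: for minimal-length elements of non-basic classes (already for dominant translations $t^\lambda$) the group $W_{\supp_\s(x)}$ is infinite, and the correction term $-\langle\bar\nu_b,2\rho\rangle$ is precisely the effect of the non-central Newton point; proving $\dim X_x(\dot x)=\ell(x)-\langle\bar\nu_b,2\rho\rangle$ in this generality requires the Hodge--Newton/$P$-alcove reduction to a Levi and the treatment of (super)basic classes carried out in \cite{He}. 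In sum, the proposal reconstructs the reduction scaffolding of \cite{DL} and \cite{GH10} correctly, but the substantive content of all three parts is either postponed or outsourced to the theorems it is meant to establish.
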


\subsection{Proof of Proposition~\ref{supp}}\label{sec:proof-supp}
We argue by induction on the length of $w$. If $w$ is of minimal length in its $\s$-conjugacy class, by Theorem \ref{reduction},  $\dim X_w(\t) = \ell(w)$ and the statement follows. Otherwise, by Theorem \ref{min}, there exist $u \in \tW$ and $s \in \tS$ such that $w \approx_\s u$ and $s u \s(s) < u$. Thus, $\supp_\s(u) = \supp_\s(w)$ and $$\supp_\s(u) - \{(\Ad(\o) \circ \s)^i(s); i \in \BZ\} \subseteq \supp_\s(s u \s(s)).$$ Moreover, $\dim X_w(\t) = \dim X_u(\t)$ and either $X_{s u \s(s)}(\t) \neq \emptyset$ or $X_{s u}(\t) \neq \emptyset$. Let us assume that the former case occurs; the proof in the other case is basically the same. By induction hypothesis,
\begin{align*} \dim X_w (\t) &= \dim X_u (\t) \ge 1 + \dim X_{s u \s(s)}(\t ) \\ &\ge 1 +  \sharp \{\text{$(\Ad(\o) \circ \s)$-orbits on $\supp_\s(s u \s(s))$}\} \\ &\ge 1 +  \sharp \{\text{$(\Ad(\o) \circ \s)$-orbits on $\supp_\s(u)$}\} - 1 \\ &= \sharp \{\text{$(\Ad(\o) \circ \s)$-orbits on $\supp_\s(u)$}\} \\ &= \sharp \{\text{$(\Ad(\o) \circ \s)$-orbits on $\supp_\s(w)$}\}. \qedhere
\end{align*}

\subsection{A general dimension bound}
For any reductive group $\mathbf H$ over $F$, we denote by $\ssrk(\mathbf H)$ the semi-simple $F$-rank of $\mathbf H$. By \cite[\S 1.9]{Ko06}, if our group $\bG$ is quasi-simple over $F$, then $\ssrk(\bJ_\t) = \sharp \{\text{$(\Ad(\t) \circ \s)$-orbits on $\tS$}\} - 1$.

\begin{corollary}\label{cor-supp}
Let $w \in \tW$ with $\supp_\s(w) = \tS$ and $X_w(\t) \neq \emptyset$.
Then $\dim X_w(\t)> \ssrk(\bJ_\t)$.
\end{corollary}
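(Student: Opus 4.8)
The plan is to deduce this directly from Proposition~\ref{supp} together with the combinatorial description of $\ssrk(\bJ_\t)$ recalled just above; the only point requiring a small argument is that the hypothesis $X_w(\t)\neq\emptyset$ pins down the length-$0$ component of $w$. Indeed, the image of an element of $\tW$ in $\Omega\cong\tW/W_a$ (the Kottwitz invariant) is constant on $\s$-conjugacy classes and is unchanged by the elementary steps $x\xrightarrow{s}_\s x'$ of Section~\ref{sec:dl-reduction}; combining Theorem~\ref{min} and Theorem~\ref{reduction}(iii), nonemptiness of $X_w(\t)$ therefore forces $w$ and $\t$ to have the same image in $\Omega$, i.e.\ $w\in W_a\t$. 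Thus, in the notation of Proposition~\ref{supp}, we are in the case $\o=\t$.

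Granting this, I would apply Proposition~\ref{supp} with $\o=\t$ to obtain
\[
    \dim X_w(\t)\ \ge\ \sharp\{(\Ad(\t)\circ\s)\text{-orbits on }\supp_\s(w)\}.
\]
By hypothesis $\supp_\s(w)=\tS$, so the right-hand side is $\sharp\{(\Ad(\t)\circ\s)\text{-orbits on }\tS\}$, which by \cite[\S1.9]{Ko06} equals $\ssrk(\bJ_\t)+1$ (we are in the quasi-simple setting of this subsection). Hence $\dim X_w(\t)\ge\ssrk(\bJ_\t)+1>\ssrk(\bJ_\t)$, as claimed.

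I do not anticipate a real obstacle: the statement is essentially the juxtaposition of Proposition~\ref{supp} with Kottwitz's rank formula, and the decisive feature is the ``$-1$'' in the latter, which is exactly what turns the (non-strict) bound of Proposition~\ref{supp} into a strict one. The only thing to be careful about is the reduction $\o=\t$ above; if one wishes to avoid even the standing quasi-simplicity hypothesis, the same bookkeeping goes through component by component, since $\Ad(\t)\circ\s$ permutes the connected components of the affine Dynkin diagram and the condition $\supp_\s(w)=\tS$ forces at least one simple reflection of each component to appear.
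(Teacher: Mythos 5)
Your overall strategy --- Proposition~\ref{supp} combined with Kottwitz's formula $\ssrk(\bJ_\t)=\sharp\{(\Ad(\t)\circ\s)\text{-orbits on }\tS\}-1$ --- is exactly the paper's, but your preliminary reduction to $\o=\t$ contains a genuine error. The image in $\Omega\cong\tW/W_a$ is \emph{not} constant on $\s$-conjugacy classes: $\s$-conjugating $x$ by $g$ replaces the class $\bar x$ by $\bar x\cdot\bar g^{-1}\s(\bar g)$, so only the image in the $\s$-coinvariants of $\Omega$ is invariant. Concretely, in the even unramified unitary case $(\tilde A_{2m+1},\varsigma_0,\o^\vee_1)$ one has $\Omega\cong\BZ/(2m+2)$ with $\s$ acting by negation, and for a length-zero element $y$ the condition $\dot y\in[\t]$ only forces $y\equiv\t$ modulo the subgroup $2\BZ/(2m+2)\BZ$, not $y=\t$. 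So from $X_w(\t)\neq\emptyset$ you may only conclude that $w\in W_a\o$ where $\epsilon\i\o\s(\epsilon)=\t$ for some $\epsilon\in\Omega$, and Proposition~\ref{supp} must then be applied with this $\o$; the relevant automorphism is $\Ad(\o)\circ\s$, not $\Ad(\t)\circ\s$.

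The repair is short and is precisely what the paper does: since $\Ad(\epsilon)$ conjugates $\Ad(\o)\circ\s$ into $\Ad(\t)\circ\s$ as automorphisms of $\tS$, the two have the same number of orbits on $\tS$, equivalently $\ssrk(\bJ_\o)=\ssrk(\bJ_\t)$. Proposition~\ref{supp} then gives
\[
\dim X_w(\t)\ \ge\ \sharp\{(\Ad(\o)\circ\s)\text{-orbits on }\tS\}\ =\ \ssrk(\bJ_\o)+1\ =\ \ssrk(\bJ_\t)+1,
\]
exactly as in your final step. (Your closing remark about the quasi-simplicity hypothesis, which Kottwitz's formula requires, is well taken.)
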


\begin{proof} Let $\o \in \Omega$ such that $w \in W_a \o$.
    As $X_w(\t) \neq \emptyset$, there exists $\epsilon \in \Omega$ such that $\epsilon\i \o \s(\epsilon) = \t$.
This implies that $\ssrk(\bJ_\o) = \ssrk(\bJ_\t)$.
The result thus follows from Proposition~\ref{supp}.
\end{proof}

\smallskip

Now we prove the main result of this section.

\begin{theorem}\label{prop:dim-lower-bound}
Suppose that $\mu$ is non-central in every simple factor of the adjoint group $\bG_{\text{ad}}$ over $F$. Then
$$\dim X(\mu, \t)_K \ge \ssrk(\bJ_\t).$$
If moreover the equality holds, then $(\bG, \mu)$ is fully Hodge-Newton decomposable.
\end{theorem}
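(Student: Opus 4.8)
The plan is to bound $\dim X(\mu,\t)_K$ from below by the dimension of one well-chosen EKOR stratum, using Proposition~\ref{supp}, and to read off the last assertion from Corollary~\ref{cor-supp}. By standard reductions --- both quantities are additive over the $F$-simple factors of $\bG_{\ad}$ and are unchanged under central isogeny --- we may assume $\bG$ is quasi-simple over $F$; then $\s$ permutes the $\breve F$-simple components of $\tS$ transitively, every $(\Ad(\t)\circ\s)$-orbit on $\tS$ meets each component, and $\ssrk(\bJ_\t)=\sharp\{(\Ad(\t)\circ\s)\text{-orbits on }\tS\}-1$ by~\cite[\S1.9]{Ko06}; the hypothesis that $\mu$ is non-central survives. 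Since $\Adm(\mu)$ lies in the coset $W_a\t$, and $\pi$ restricts to an isomorphism $X_w(\t)\cong\pi(X_w(\t))$ for $w\in{}^K\tW$ (cf.~\cite{He:KRconj},~\cite{GH}), the decomposition $X(\mu,\t)_K=\bigsqcup_{w\in\KAdm(\mu)}\pi(X_w(\t))$ with $\KAdm(\mu)\subseteq{}^K\tW$ gives
\[
    \dim X(\mu,\t)_K=\max\{\dim X_w(\t)\ :\ w\in\KAdm(\mu),\ X_w(\t)\neq\emptyset\}.
\]

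Suppose first that $(\bG,\mu)$ is \emph{not} fully Hodge--Newton decomposable. By Theorem~\ref{fully-HN-dec}(2) there is $w\in\KAdm(\mu)$ with $X_w(\t)\neq\emptyset$ and $W_{\supp_\s(w)}$ infinite. As $\supp_\s(w)$ is $(\Ad(\t)\circ\s)$-stable and every connected proper subdiagram of an affine Dynkin diagram has finite type, an infinite $W_{\supp_\s(w)}$ forces $\supp_\s(w)$ to contain a whole $\breve F$-simple component, hence $\supp_\s(w)=\tS$ by transitivity of $\s$ on the components. Corollary~\ref{cor-supp} then gives $\dim X_w(\t)>\ssrk(\bJ_\t)$, so $\dim X(\mu,\t)_K>\ssrk(\bJ_\t)$. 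This proves the inequality (strictly) here, and, being the contrapositive of the second assertion, it shows that equality $\dim X(\mu,\t)_K=\ssrk(\bJ_\t)$ can only occur in the fully Hodge--Newton decomposable case.

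It remains to prove $\dim X(\mu,\t)_K\ge\ssrk(\bJ_\t)$ when $(\bG,\mu)$ \emph{is} fully Hodge--Newton decomposable; the two cases together then give the full statement. By Proposition~\ref{supp} and the displayed formula, it suffices to find $w\in\KAdm(\mu)$ with $X_w(\t)\neq\emptyset$ and $\sharp\{(\Ad(\t)\circ\s)\text{-orbits on }\supp_\s(w)\}\ge\ssrk(\bJ_\t)$. Here Theorem~\ref{fully-HN-dec}(2) forces $\supp_\s(w)$ to be a \emph{proper} $(\Ad(\t)\circ\s)$-stable subset of $\tS$, so $\tS-\supp_\s(w)$ is a nonempty union of orbits and the orbit count above is $\le\ssrk(\bJ_\t)$, with equality precisely when $\tS-\supp_\s(w)$ is a single orbit. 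Thus it is enough to exhibit $w\in\KCox(\mu)$ that is a twisted Coxeter element of the standard parabolic $W_{\tS-\mathcal O}$ for a suitable $(\Ad(\t)\circ\s)$-orbit $\mathcal O$ --- this parabolic being automatically finite, since $\mathcal O$ meets every $\breve F$-simple component so that $\tS-\mathcal O$ is a proper subdiagram of each affine component; granting this, $\pi(X_w(\t))$ contains a fine Deligne--Lusztig variety of dimension $\ell(w\t\i)=\sharp\{\text{orbits on }\supp_\s(w)\}=\ssrk(\bJ_\t)$, and $X_w(\t)\neq\emptyset$ by~\cite{GH}. I expect producing $w$ to be the main obstacle: the constraint $w\in{}^K\tW$ (no left descent in $K$) rules out taking an arbitrary twisted Coxeter element of $W_{\tS-\mathcal O}$. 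My approach would be to use that $\mu$ is non-central --- whence $\Adm(\mu)\neq\{\t\}$, and the translations $t^{x(\underline\mu)}\in\Adm(\mu)$ ($x\in W_0$) have length $\langle\underline\mu,2\rho\rangle$ and, for well-chosen $x$, large $\s$-support --- and, via Theorem~\ref{min} and the Deligne--Lusztig reduction of Theorem~\ref{reduction}, to descend to a minimal-length element while controlling the support with Proposition~\ref{supp}, choosing $\mathcal O$ and the reduced expression so that the initial simple reflection avoids $K$ (possible as $K\subsetneq\tS$), thereby landing in $\KAdm(\mu)$; that the outcome can be taken to be a corank-one twisted Coxeter element, and that $\dim X_w(\t)=\ell(w\t\i)$ for $w\in\Cox(\mu)$, is supplied by the methods of~\cite{GH},~\cite{GHN}. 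At worst one checks this against the short list of fully Hodge--Newton decomposable data --- but, importantly, without using the classification of Coxeter-type cases, since the present inequality is one of its inputs (Theorem~\ref{thm:characterization-coxeter-type}).
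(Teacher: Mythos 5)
Your treatment of the second assertion is fine and matches the paper: if $(\bG,\mu)$ is not fully Hodge--Newton decomposable, some $w\in\KAdm(\mu)$ with $X_w(\t)\ne\emptyset$ has $\supp_\s(w)=\tS$ (by quasi-simplicity and $\s$-stability), and Corollary~\ref{cor-supp} gives $\dim X_w(\t)>\ssrk(\bJ_\t)$. The gap is in the remaining case: you reduce the lower bound to exhibiting a twisted Coxeter element $w\in\KAdm(\mu)$ whose $\s$-support misses exactly one $(\Ad(\t)\circ\s)$-orbit, you correctly identify this as ``the main obstacle'', and then you do not produce it --- the proposed descent via Theorems~\ref{min} and~\ref{reduction} gives no control over whether the minimal-length element you land on is a corank-one twisted Coxeter element or lies in ${}^K\tW$, and falling back on ``the short list of fully Hodge--Newton decomposable data'' replaces the proof by an unexecuted case check against the classification of~\cite{GHN}. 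You are forced into this corner because you insist on deriving the lower bound from Proposition~\ref{supp}, which only counts orbits in the support; to reach $\ssrk(\bJ_\t)$ that way the support must be as large as possible, i.e.\ $w$ must be (essentially) a twisted Coxeter element.

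The paper avoids all of this with a much softer construction that needs no case distinction and no Coxeter element. Choose $\xi\in W_0\cdot\underline{\mu}$ with $t^\xi\in{}^K\tW$ (Lemma~\ref{K-min}), fix a reduced expression $t^\xi=\t s_{i_1}\cdots s_{i_k}$, and truncate: $w=\t s_{i_1}\cdots s_{i_m}$ with $m=\ssrk(\bJ_\t)$. Then $w\le t^\xi$ and $w\in{}^K\tW$ (a length-additive left factor of an element of ${}^K\tW$ stays in ${}^K\tW$), so $w\in\KAdm(\mu)$; and since $\ell(w\t\i)=m$ is strictly less than the number of $(\Ad(\t)\circ\s)$-orbits on $\tS$, the support $\supp_\s(w)$ is a proper stable subset, so $W_{\supp_\s(w)}$ is finite. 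For such $w$ one has the \emph{exact} formula $\dim X_w(\t)=\ell(w)=m$ (not merely the orbit-count lower bound of Proposition~\ref{supp}), whence $\dim X(\mu,\t)_K\ge m$ unconditionally. If you want to salvage your architecture, this truncation is the missing ingredient: it replaces the unproved existence of a corank-one twisted Coxeter element in $\KAdm(\mu)$ by an element whose only required feature is its length.
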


\begin{proof}
    We may reduce to the case where $\bG$ is quasi-simple over $F$ and that it is semisimple of adjoint type. Under this assumption, we have
    \begin{equation}
    \bG_{\breve F} = \bG_1 \times \cdots \times \bG_r,
    \end{equation}
    where each $\bG_i$ is a simple reductive group over $\breve F$, and $\s(\bG_j) = \bG_{j+1}$ for all $j$. Here we set $\bG_{r+1} = \bG_1$. Write $\tilde \BS = \tilde \BS_1 \sqcup \cdots \sqcup \tilde\BS_r$ and $\underline{\mu} = (\underline{\mu}_1, \dots, \underline{\mu}_r)$ with respect to the decomposition above. For $J \subseteq \tilde \BS$ we set $J_j = J \cap \tilde \BS_j$. We may write $\rho$ as $\rho=\rho_1+\ldots+\rho_l$, where $\rho_i$ is the half sum of positive roots corresponding to the root system associated to $\tS_i$. We also have that $\Omega=\Omega_1 \times \ldots \times \Omega_l$. We write $\t = (\t_1, \ldots, \t_l)$, where $\t_i \in \Omega_i$.

    It is easy to see that $\ell(t^{\mu_1}) = \<\mu_1, 2 \rho_1\> \ge \sharp\tS_1 \ge \ssrk(\bJ_\t)$. Let $\xi=(\xi_1, \ldots, \xi_l) \in W_0 \cdot \underline{\mu}$ such that $t^{\xi} \in {}^K \tW$. We choose a reduced expression $t^{\xi}=\t s_{i_1} \cdots s_{i_k}$ of $t^{\xi}$. Let $w=\t s_{i_1} \cdots s_{i_m}$, where $m=\ssrk(\bJ_\t)$. Then $w \le t^{\xi}$ and $w \in {}^K \tW$. Hence $w \in \KAdm(\mu)$. Since $\ell(w)=m<\sharp \{\text{$(\Ad(\t) \circ \s)$-orbits on $\tS$}\}$, the Weyl group $W_{\supp_\s(w)}$ is finite and hence $\dim X_{K, w}(\t)=\dim X_w(\t)=\ell(w)$. Thus $\dim X(\mu, \t)_K \ge \ell(w)=m$.

Now we assume that $\dim X(\mu, \t)_K = \ssrk(\bJ_\t)$. Let $w \in \KAdm(\mu)$ such that $K \cdot_\s I w I \cap [\t] \neq \emptyset$, that is, $I w I \cap [\t] \neq \emptyset$ or in other words, $X_w(\t) \neq \emptyset$. Then $ \dim X_w(\t)=\dim X_{K, w}(\t) \le \dim X(\mu, \t)_K = \ssrk(\bJ_\t)$. By Corollary~\ref{cor-supp}, we have $\supp_\s(w) \subsetneq \tS$ and hence $K \cdot_\s I w I \subseteq [\t]$. Noticing that $$K \Adm(\mu) K = \sqcup_{w \in \KAdm(\mu)} K \cdot_\s I w I,$$ we deduce that
    \begin{equation}\label{eq:fhnd}
    K \Adm(\mu) K \cap [\t] = \bigsqcup_{w \in \KAdm(\mu), \sharp W_{\supp_\s(w)} < \infty} K \cdot_\s I w I.
    \end{equation}
    By Section~\ref{def-fullyHN}, $(\bG, \mu)$ is fully Hodge-Newton decomposable.
\end{proof}

\section{Classification}\label{sec:classification}

Note that the fully Hodge-Newton decomposable cases are classified in \cite{GHN}. By further studying these cases via a case-by-case analysis, one may get a classification of the Coxeter types. However, there is a more direct approach (without using the classification of the Hodge-Newton decomposable cases). This approach classifies the cases where $\dim X(\mu, \t)_K = \ssrk(\bJ_\t)$, and in particular, by analyzing all these cases, we show that this equality implies that $(\bG, \mu, K)$ is of Coxeter type and thus we obtain a classification of the Coxeter types. This is what we will do in this section.

We may assume that $\bG$ is quasi-simple over $F$, and that it is semisimple of adjoint type (cf.~\cite[Section~3.3]{GHN}).
Under this assumption, we have a decomposition
\begin{equation}\label{eqn:decomposition}
\bG_{\breve F} = \bG_1 \times \cdots \times \bG_r,
\end{equation}
as in the proof of Theorem~\ref{prop:dim-lower-bound}.
As before, here each $\bG_i$ is a simple reductive group over $\breve F$, and $\s(\bG_j) = \bG_{j+1}$ for all $j$.  We set $\bG_{r+1} = \bG_1$. Write $\tilde \BS = \tilde \BS_1 \sqcup \cdots \sqcup \tilde\BS_r$ and $\underline{\mu} = (\underline{\mu}_1, \dots, \underline{\mu}_r)$ with respect to the decomposition above. For $J \subseteq \tilde \BS$ we set $J_j = J \cap \tilde \BS_j$.

We assume further each factor $\underline{\mu}_j$ is non-central.

Recall that $\Phi$ denotes the unique reduced root system underlying the relative root system of $\bG$ over $\breve F$ (the \emph{\'echelonnage root system}).

\subsection{Admissible triples} Let $s \in \tS$. If $s \in W_0$, set $\a_s$ to be the simple root corresponding to $s$. Otherwise, set $\a_s = -\th$, where $\th \in \Phi^+$ is the highest root of the $j$-component of the decomposition~\eqref{eqn:decomposition}, where $s\in \tS_j$; then $s = t^{\th^\vee} s_\th$.
Let $J \subseteq \tS$ such that $W_J$ is finite. We denote by $\Phi_J$ the root system spanned by $\a_s$ for $s \in J$. 

Let $p: \tW \rtimes \<\s\> \to \GL({X_*} \otimes \BR)$ be the natural projection.
\begin{lemma} \label{K-min}
    For $\l \in X_*$ we have $t^\l \in {}^K \tW$ if and only if $\<\l, \a_s\> \ge 0$ for all $s \in K$. In particular, there exists $\xi \in W_0 \cdot \underline{\mu}$ such that $t^\xi \in {}^K \tW$.
\end{lemma}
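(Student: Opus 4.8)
The plan is to reduce the first assertion to the standard length criterion for minimal coset representatives and then to a direct computation with affine roots. Recall that $t^\lambda \in {}^K\tW$ if and only if $\ell(st^\lambda) \ge \ell(t^\lambda)$ for every $s \in K$; since multiplication by a simple reflection changes the length by exactly $1$, this is equivalent to $\ell(st^\lambda) > \ell(t^\lambda)$ for all $s\in K$. Thus the whole content of the first statement is the identity
\[
    \ell(st^\lambda) > \ell(t^\lambda) \iff \langle\lambda, \a_s\rangle \ge 0
    \qquad (s\in\tS,\ \lambda\in X_*).
\]

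To prove this I would use the affine-root form of the length function: writing $a_s$ for the simple affine root attached to $s$, one has $\ell(sw) > \ell(w)$ if and only if $w^{-1}(a_s)$ is a positive affine root. Here $a_s = \a_s$ (with zero constant term) when $s\in W_0$, while $a_s = \delta - \th_j$ when $s\in\tS_j$ is not in $W_0$, where $\th_j$ is the highest root of the $j$-th component, so that $\a_s = -\th_j$ in the notation of the lemma. Since $t^{-\lambda}$ sends an affine root $a + k\delta$ to $a + (k + \langle a,\lambda\rangle)\delta$, one computes: in the first case $t^{-\lambda}(a_s) = \a_s + \langle\a_s,\lambda\rangle\delta$, which is positive iff $\langle\a_s,\lambda\rangle\ge 0$ (using $\a_s\in\Phi^+$); in the second case $t^{-\lambda}(a_s) = -\th_j + (1 - \langle\th_j,\lambda\rangle)\delta$, whose vector part lies in $\Phi^-$, so it is positive iff $1 - \langle\th_j,\lambda\rangle > 0$, i.e. iff $\langle\th_j,\lambda\rangle \le 0$, i.e. iff $\langle\lambda, \a_s\rangle = \langle\lambda, -\th_j\rangle \ge 0$. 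Both cases give the claimed equivalence, and hence the first part of the lemma.

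For the ``in particular'' statement, I want an element $\xi$ in the orbit $W_0\cdot\underline\mu$ with $\langle\xi, \a_s\rangle \ge 0$ for all $s\in K$. Recall that $W_K$ is finite, so $\{\a_s : s\in K\}$ is a simple system of the root subsystem $\Phi_K\subseteq\Phi$ (standard Coxeter-theoretic input), and its Weyl group $W(\Phi_K)$, generated by the reflections $s_{\a_s}$, is a subgroup of $W_0$. By the standard fact that every orbit of a finite Weyl group meets the dominant cone of any chosen simple system, the orbit $W(\Phi_K)\cdot\underline\mu$ --- which is contained in $W_0\cdot\underline\mu$ --- contains an element $\xi$ with $\langle\xi, \a_s\rangle\ge 0$ for all $s\in K$. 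By the first part of the lemma, $t^\xi\in{}^K\tW$, as desired.

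The main obstacle is purely bookkeeping: getting the affine-root conventions right in the length computation, in particular the sign in $\a_s = -\th_j$ for the affine nodes and the fact that an affine root $a + 0\cdot\delta$ with $a$ a finite root is positive precisely when $a\in\Phi^+$. Once the displayed identity is in hand, both halves of the lemma are immediate; the remaining ingredient --- that $\{\a_s\}_{s\in K}$ forms a simple system of $\Phi_K$ when $W_K$ is finite --- is standard and can simply be cited.
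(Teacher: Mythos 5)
Your proof is correct and follows the same route as the paper: the paper dismisses the first assertion as immediate from the definitions (your affine-root length computation is exactly the verification it has in mind, with the right signs in both cases), and for the second assertion the paper likewise observes that $\{\a_s;\ s\in K\}$ is a simple system for $\Phi_K$ with Weyl group $p(W_K)\subseteq W_0$ and takes the dominant representative of a $p(W_K)$-orbit in $W_0\cdot\underline{\mu}$.
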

\begin{proof}
The first statement follows immediately from the definitions. For the second one, notice that $\{\a_s; s \in K\}$ is the set of simple roots for $\Phi_K$ whose Weyl group is $p(W_K) \subseteq W_0$. Thus, each $p(W_K)$-orbit in $W_0 \cdot \underline{\mu}$ contains a unique cocharacter $\xi$ such that $\<\xi, \a_s\> \ge 0$ for $s \in K$, that is, $t^\xi \in {}^K \tW$ as desired.
\end{proof}

Let $\xi \in W_0 \cdot \underline{\mu}$ and let $J \subsetneq \tS$ be a maximal proper $\s$-stable subset. Let $\xi_J \in \BR \Phi_J^\vee$ be such that $\<\xi_J, \a\> = \<\xi, \a\>$ for $\a \in \Phi_J$. We denote by $\xi_J^\diamond$ the $p(\s)$-average of $\xi_J$.

\begin{definition}
    We say the triple $(\xi, J, K)$ with $K = \s(K) \subseteq J$ is \emph{admissible} if $t^\xi \in {}^K \tW$ and $\xi_J^\diamond \in \BR \Phi_K^\vee$.
\end{definition}

In this case, we define $$K_\xi = \cup_C \cup_{i \in \BZ} \s^i(C) \subseteq K,$$ where $C$ ranges over the connected components $C$ of $K$ on which the $p(\s)$-average $\xi^\diamond$ is nonzero. In other words, $K_\xi$ is the minimal $\s$-stable subset of $K$ such that $\xi_J^\diamond \in \BR \Phi_{K_\xi}^\vee$.

\begin{lemma}\label{admissible}
    Let $(\xi, J, K)$ be an admissible triple. Then there exists some $\s$-Coxeter element $c \in W_{K_\xi}$ such that \begin{align*}\ell(t^\xi c) = \ell(t^\xi) - \ell(c) = \<\underline{\mu}, 2\rho\> - \sharp \{\text{$\s$-orbits of $K_\xi$}\}. \end{align*} In particular, $t^\xi c \in {}^K \tW \cap \Adm(\lambda)$ and the Newton point of $t^\xi c$ is central.
\end{lemma}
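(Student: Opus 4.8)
The plan is to produce a reduced expression for $t^\xi$ that exhibits a $\sigma$-Coxeter element $c$ of $W_{K_\xi}$ as a right factor, so that multiplication by $c$ is length-subtractive. Recall that $\xi$ is dominant for $\Phi_K$ (since $t^\xi \in {}^K\tW$), so in particular $\<\xi, \alpha_s\> \ge 0$ for all $s \in K$. The key structural input is the admissibility hypothesis $\xi_J^\diamond \in \BR\Phi_K^\vee$, together with the definition of $K_\xi$ as the minimal $\sigma$-stable subset of $K$ carrying $\xi_J^\diamond$; this means $\xi$ is strictly dominant "in the $K_\xi$-directions'' in an averaged sense. The first step is to pick any $\sigma$-Coxeter element $c \in W_{K_\xi}$ and show that $\ell(t^\xi c) = \ell(t^\xi) - \ell(c)$, equivalently that $t^\xi \cdot s > t^\xi$ gets reversed step by step as we peel off $c$, i.e.\ that for a suitable reduced word $c = s_{j_1}\cdots s_{j_q}$ (with $q = \#\{\sigma\text{-orbits of }K_\xi\}$) each partial product $t^\xi s_{j_1}\cdots s_{j_i}$ has length $\ell(t^\xi) - i$.

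**For this** I would use the standard criterion: for $x \in \tW$ and $s \in \tS$, $\ell(xs) < \ell(x)$ iff $x(\alpha_s) < 0$ (as an affine root). For $x = t^\xi$ and $s = s_\alpha$ with $\alpha \in \Phi_K^+$, we have $t^\xi(\alpha) = \alpha - \<\xi,\alpha\>\delta$, so $\ell(t^\xi s_\alpha) < \ell(t^\xi)$ precisely when $\<\xi, \alpha\> > 0$. Since $\xi$ is $\Phi_K$-dominant, $\<\xi,\alpha\>\ge 0$ for all $\alpha \in \Phi_K^+$; and the set of $\alpha$ with $\<\xi,\alpha\> > 0$ is exactly the set of positive roots of $\Phi_{K_\xi}$ — this is where I cash in the definition of $K_\xi$ and the admissibility condition $\xi_J^\diamond \in \BR\Phi_{K_\xi}^\vee$. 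Now within $W_{K_\xi}$, the element $\xi$ restricted to $\Phi_{K_\xi}$ is regular dominant, so $t^\xi$ conjugated appropriately behaves like a strictly dominant translation: multiplying on the right by a $\sigma$-Coxeter element $c$ of $W_{K_\xi}$, which has length $q = \#\{\sigma\text{-orbits of }K_\xi\}$ and uses one simple reflection per $\sigma$-orbit, decreases the length by exactly $\ell(c)$. The cleanest way to see this is to note $\ell(t^\xi c) = \ell(t^\xi) - \ell(c)$ iff $t^\xi$ maps every positive root in the ``inversion set'' $\mathrm{Inv}(c^{-1}) \subset \Phi_{K_\xi}^+$ to a negative affine root, which holds since all such roots pair strictly positively with $\xi$.

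**Then** the middle equality $\ell(t^\xi) - \ell(c) = \<\underline\mu, 2\rho\> - \#\{\sigma\text{-orbits of }K_\xi\}$ is immediate: $\ell(t^\xi) = \<\xi, 2\rho\> = \<\underline\mu, 2\rho\>$ because $\xi$ is $W_0$-conjugate to the dominant $\underline\mu$ and length of a translation depends only on the conjugacy class, while $\ell(c) = q = \#\{\sigma\text{-orbits of }K_\xi\}$ by the defining property of a $\sigma$-Coxeter element. For the ``in particular'' clause: $t^\xi c \le t^\xi \le t^{x(\underline\mu)}$ for suitable $x \in W_0$ shows $t^\xi c \in \Adm(\mu)$; that $t^\xi c \in {}^K\tW$ follows from $t^\xi \in {}^K\tW$ and Lemma~\ref{K-min} together with the fact that $c \in W_{K_\xi} \subseteq W_K$ was peeled off without creating any length decrease across $W_K$-generators outside $K_\xi$ — more precisely $t^\xi c$ is still $\Phi_K$-dominant because $c$ only uses reflections in $K_\xi$ and $\xi$ was strictly dominant there, so after applying $c$ the pairing with simple roots of $\Phi_K$ stays $\ge 0$. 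Finally the Newton point of $t^\xi c$ is central: this element lies in $\KAdm(\mu)$ with finite $\sigma$-support contained in $K$, hence (by the Hodge--Newton / fully HN machinery, or directly because $t^\xi c$ has a reduced expression $\tau s_{i_1}\cdots s_{i_m}$ with $m$ small) its class in $B(\bG,\mu)$ must be the basic one.

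**The main obstacle** I anticipate is the careful verification that the specific $\sigma$-Coxeter element $c$ can be chosen with a reduced word all of whose left partial products are obtained by right-multiplying $t^\xi$ by reflections that strictly decrease length — i.e.\ controlling the interaction between the $\sigma$-orbit structure on $K_\xi$ and the requirement that each successive reflection acts on a root still pairing positively with the (modified) translation vector. This is essentially a statement that a strictly dominant translation admits a ``$\sigma$-Coxeter descent'' of full length $\#\{\sigma\text{-orbits}\}$; I would handle it by induction on the number of $\sigma$-orbits, at each stage picking a simple reflection $s$ in a ``source'' orbit (one not connected to the orbits already used), checking $t^\xi s < t^\xi$ via $\<\xi,\alpha_s\> > 0$, and observing that $\xi$ remains strictly dominant for the smaller root subsystem after one applies $s$ — the commutation relations between $s$ and the remaining generators ensure the inductive hypothesis applies to $t^\xi s$ in $W_{K_\xi \setminus (\sigma\text{-orbit of }s)}$, possibly after also removing the $\Ad(\t)\circ\sigma$-conjugates. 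A clean alternative, which I would state as the actual argument, is to invoke the length formula $\ell(t^\xi c) = \ell(t^\xi) - \ell(c)$ $\Leftrightarrow$ $\mathrm{Inv}(c^{-1}) \cap (\text{affine roots made negative by }t^{-\xi})^c = \emptyset$ and reduce everything to the single inequality $\<\xi,\alpha\> > 0$ for $\alpha \in \Phi_{K_\xi}^+$, which follows at once from admissibility; this sidesteps the inductive bookkeeping entirely.
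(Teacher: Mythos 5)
There are two genuine gaps. First, your key reduction --- that $\ell(t^\xi c)=\ell(t^\xi)-\ell(c)$ for \emph{any} $\s$-Coxeter element $c$ of $W_{K_\xi}$ because every $\alpha\in\Phi_{K_\xi}^+$ satisfies $\<\xi,\alpha\>>0$ --- is false. Admissibility only gives $\<\xi,\alpha_s\>\ge 0$ for $s\in K$, and $K_\xi$ is a union of $\s$-orbits of entire connected components of $K$ on which the averaged vector is nonzero; it may well contain simple reflections $s$ with $\<\xi,\alpha_s\>=0$. For instance, in type $A_2$ with $\s=\id$, $K=K_\xi=\{s_1,s_2\}$ and $\<\xi,\alpha_1\>=1$, $\<\xi,\alpha_2\>=0$, the Coxeter element $c=s_2s_1$ has $\alpha_2$ in the relevant inversion set and fails the length condition, while $c=s_1s_2$ works. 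So the existence statement really does require a careful choice of $c$ (one suitable reflection per $\s$-orbit); the paper delegates exactly this construction to \cite[Lemma 6.4, Prop.~6.7]{GHN}, and your ``clean alternative'' does not replace it. Your inductive sketch is closer to what is needed, but it is not carried out and still presupposes strict positivity at each step.

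Second, your argument for the centrality of the Newton point does not work. The element $t^\xi c$ does not in general have finite $\s$-support (its length is $\<\underline{\mu},2\rho\>-\sharp\{\text{$\s$-orbits of }K_\xi\}$, and its support is typically all of $\tS$), and full Hodge--Newton decomposability is not available at this stage --- the lemma is itself an ingredient in establishing that property. The paper instead computes directly: the Newton point is controlled by the $p(c\s)$-average $\nu$ of $\xi_J$; since $c$ is a $\s$-Coxeter element of $W_{K_\xi}$, the map $p(c\s)-\id$ is invertible on $\BR\Phi_{K_\xi}^\vee$, so $\nu$ is orthogonal to $\BR\Phi_{K_\xi}^\vee$, while $\nu-\xi_J^\diamond\in\BR\Phi_{K_\xi}^\vee$ and the admissibility hypothesis $\xi_J^\diamond\in\BR\Phi_{K_\xi}^\vee$ together force $\nu=0$. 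This is the one place where the hypothesis $\xi_J^\diamond\in\BR\Phi_K^\vee$ does real work, and your proposal never engages with it.
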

\begin{proof}
The existence of $c$ such that $\ell(t^\xi c) = \ell(t^\xi) - \ell(c)$ and hence $t^\xi c \in {}^K \tW \cap \Adm(\lambda)$ follows exactly along the same lines as~\cite[Lemma 6.4, Proposition 6.7]{GHN}. It remains to show that the Newton point of $t^\xi c$ is central. By the proof of \cite[Lemma 6.4]{GHN}, it suffices to show that the $p(c\s)$-average $\nu$ of $\xi_J$ is zero. Write $\xi_J = v' + v''$ such that $v'' \in \BR\Phi_{K_\xi}^\vee$ and $v'$ is orthogonal to $\BR\Phi_{K_\xi}^\vee$. As $c$ is a $\s$-Coxeter element of $W_{K_\xi}$, we see that $p(c\s) - \id$ is invertible on $\BR\Phi_{K_\xi}^\vee$, which means that $\nu$ equals the $p(\s)$-average of $v'$. In particular, $\nu$ is orthogonal to $\BR\Phi_{K_\xi}^\vee$. On the other hand, $\nu - \xi_J^\diamond \in \BR\Phi_{K_\xi}^\vee$. By assumption, $\xi_J^\diamond \in \BR\Phi_{K_\xi}^\vee$, which means $\nu \in \BR\Phi_{K_\xi}^\vee$ and $\nu = 0$ as desired.
\end{proof}

\begin{lemma}\label{ineq-1}
    Suppose that $\dim X(\mu, \t)_K=\ssrk(\bJ_\t)$ and that $(\xi, J, K')$ is an admissible triple such that $K' \supseteq K$. Then \begin{align*} \tag{a} \<\underline{\mu}, 2\rho\> \le \sharp \{\text{$\s$-orbits of $K_\xi'$}\} + \ssrk(\bJ_\t).\end{align*}
\end{lemma}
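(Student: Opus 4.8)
The plan is to produce a single nonempty EKOR stratum of $X(\mu,\t)_K$ of dimension exactly $\<\underline{\mu},2\rho\>-\sharp \{\text{$\s$-orbits of $K_\xi'$}\}$. Since by hypothesis $\dim X(\mu,\t)_K=\ssrk(\bJ_\t)$, the mere existence of such a stratum yields inequality~(a).

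First I would apply Lemma~\ref{admissible} to the admissible triple $(\xi,J,K')$, obtaining a $\s$-Coxeter element $c\in W_{K_\xi'}$ such that $w:=t^\xi c$ lies in ${}^{K'}\tW\cap\Adm(\mu)$, has central Newton point, and satisfies $\ell(w)=\<\underline{\mu},2\rho\>-\sharp \{\text{$\s$-orbits of $K_\xi'$}\}$. Since $K\subseteq K'$, we have ${}^{K'}\tW\subseteq{}^K\tW$, so $w\in{}^K\tW$ and therefore $w\in\KAdm(\mu)$; thus $w$ indexes an EKOR stratum $\pi(X_w(\t))$ of $X(\mu,\t)_K$.

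Next I would verify that this stratum is nonempty, i.e.\ $X_w(\t)\ne\emptyset$. Because $w\in\Adm(\mu)$ has central Newton point, its $\s$-conjugacy class is the basic element of $B(\bG,\mu)$, that is $\dot w\in[\t]$. Using Theorem~\ref{min}, choose $y$ of minimal length in $[\t]$ with $w\to_\s y$; then $X_y(\t)\ne\emptyset$ by Theorem~\ref{reduction}(iii), and traversing the reduction chain $w\to_\s y$ in reverse and applying Theorem~\ref{reduction}(i)--(ii) at each step gives $X_w(\t)\ne\emptyset$.

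Finally I would compute the dimension of the stratum. By Theorem~\ref{prop:dim-lower-bound}, the equality $\dim X(\mu,\t)_K=\ssrk(\bJ_\t)$ forces $(\bG,\mu)$ to be fully Hodge--Newton decomposable; hence, by Theorem~\ref{fully-HN-dec}(2) applied to $w\in\KAdm(\mu)$ with $X_w(\t)\ne\emptyset$, the group $W_{\supp_\s(w)}$ is finite. Then $X_w(\t)$ is a disjoint union of copies of the classical Deligne--Lusztig variety attached to $w\t\i$ (Section~\ref{subsec:bt-stratification}, taking Iwahori level), so $\dim X_w(\t)=\ell(w\t\i)=\ell(w)$; moreover, since $w\in{}^K\tW$, the projection $\pi$ restricts to an isomorphism of $X_w(\t)$ onto $\pi(X_w(\t))$ (cf.~\cite{He:KRconj}), whence $\dim\pi(X_w(\t))=\ell(w)$. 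Therefore
\[
  \ssrk(\bJ_\t)=\dim X(\mu,\t)_K\ \ge\ \dim\pi(X_w(\t))=\ell(w)=\<\underline{\mu},2\rho\>-\sharp \{\text{$\s$-orbits of $K_\xi'$}\},
\]
which is inequality~(a). The one nonroutine point is this last dimension count: it relies on $W_{\supp_\s(w)}$ being finite, so that the affine Deligne--Lusztig variety $X_w(\t)$ collapses to a classical Deligne--Lusztig variety of dimension $\ell(w\t\i)$, and on the fact that passing from Iwahori level to level $K$ does not drop the dimension because $w\in{}^K\tW$ --- both inputs being supplied by the earlier parts of the paper.
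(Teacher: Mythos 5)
Your proposal is correct and follows essentially the same route as the paper: apply Lemma~\ref{admissible} to produce $w=t^\xi c$ with $\ell(w)=\<\underline{\mu},2\rho\>-\sharp\{\text{$\s$-orbits of }K_\xi'\}$, invoke Theorem~\ref{prop:dim-lower-bound} to get full Hodge--Newton decomposability, and conclude via $\ell(w)=\dim X_w(\t)\le\dim X(\mu,\t)_K=\ssrk(\bJ_\t)$. You merely spell out the nonemptiness and the identification $\dim X_w(\t)=\ell(w)$ that the paper dismisses as an easy consequence of~\eqref{eq:fhnd}, and these details are sound.
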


\begin{proof}
    Let $c$ be as in Lemma \ref{admissible}. Then we need to show that $\ell(t^\xi c) \le \ssrk(\bJ_\t)$. But our assumption implies, by Theorem~\ref{prop:dim-lower-bound}, that $(\bG, \mu)$ is fully Hodge--Newton decomposable, and hence we have $\ell(t^\xi c) = \dim X_{t^\xi c}(\t)$ (note that this is an easy consequence of~\eqref{eq:fhnd} and does not require the use of classification results).

    Altogether we obtain
    \[
        \ell(t^\xi c) = \dim X_{t^\xi c}(\t) \le \dim X(\mu, \t)_K=\ssrk(\bJ_\t),
    \]
    as desired.
\end{proof}

Given a $\s$-stable subset $K \subseteq \tS$ with $W_K$ finite, it follows from Lemma \ref{K-min} that there always exists some admissible triple $(\xi, J, K)$.

\begin{corollary}\label{ineq-2}
    If $\dim X(\mu, \t)_K=\ssrk(\bJ_\t)$, 
    then
    \begin{align*}
        \tag{b} \<\underline{\mu}, 2\rho\> \le \ssrk(\bG) + \ssrk(\bJ_\t).
    \end{align*}
    In particular,
    \[
        \tag{c} \<\underline{\mu}, 2\rho\>  \le 2\,{\rm rank}^{\rm ss}_{\overline{F}}(\bG).
\]
\end{corollary}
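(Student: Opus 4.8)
The plan is to deduce both inequalities from Lemma~\ref{ineq-1} by applying it to an admissible triple with $K' = K$ and then estimating the orbit-count on the right-hand side in the crudest possible way. Throughout we use the standing assumptions of Section~\ref{sec:classification}: $\bG$ is quasi-simple over $F$, semisimple of adjoint type, and each $\underline{\mu}_j$ is non-central.

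First I would invoke the remark immediately preceding the corollary: since $W_K$ is finite, there exists an admissible triple of the form $(\xi, J, K)$, i.e.\ one with $K' = K$. Applying Lemma~\ref{ineq-1} to it gives
\[
    \<\underline{\mu}, 2\rho\> \;\le\; \sharp\{\text{$\s$-orbits of }K_\xi\} + \ssrk(\bJ_\t).
\]
By construction $K_\xi$ is a $\s$-stable subset of $K$, and $K \subsetneq \tS$ since a level structure is by definition a proper subset of $\tS$; hence $K_\xi \subsetneq \tS$ is $\s$-stable. The $\s$-orbits of $K_\xi$ are therefore exactly the $\s$-orbits of $\tS$ that lie inside $K_\xi$, and any $\s$-orbit of $\tS$ meeting the nonempty set $\tS - K$ fails to be one of them. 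Combined with the identity $\ssrk(\bG) = \sharp\{\text{$\s$-orbits on }\tS\} - 1$ (apply the formula recalled in Section~\ref{sec:dimension-formula}, i.e.\ \cite[\S 1.9]{Ko06}, to the trivial length-$0$ element, whose $\s$-centralizer is $\bG$ itself), this gives $\sharp\{\text{$\s$-orbits of }K_\xi\} \le \ssrk(\bG)$, which is inequality~(b).

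For~(c) I would bound each summand in~(b) separately by ${\rm rank}^{\rm ss}_{\overline{F}}(\bG)$. The relative semisimple rank never exceeds the absolute one, so $\ssrk(\bG) \le {\rm rank}^{\rm ss}_{\overline{F}}(\bG)$. For the second summand: since $\t$ is basic its Newton point is central, so $\bJ_\t$ is an inner form of $\bG$; in particular $\bJ_{\t, \overline{F}} \cong \bG_{\overline{F}}$, whence ${\rm rank}^{\rm ss}_{\overline{F}}(\bJ_\t) = {\rm rank}^{\rm ss}_{\overline{F}}(\bG)$ and a fortiori $\ssrk(\bJ_\t) \le {\rm rank}^{\rm ss}_{\overline{F}}(\bG)$. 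Adding the two estimates yields~(c).

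I do not expect a genuine obstacle: once Lemma~\ref{ineq-1} and the existence of admissible triples are in hand, this is pure bookkeeping. The points requiring care are (i) that one is allowed to take $K' = K$ in the admissible triple (exactly the remark before the corollary); (ii) the chain of proper inclusions $K_\xi \subseteq K \subsetneq \tS$, which is what produces the ``$-1$'' and hence identifies the orbit count with $\ssrk(\bG)$ rather than $\ssrk(\bG)+1$; and (iii) in the passage from~(b) to~(c), the fact that the $\s$-centralizer of a basic element is an inner form of $\bG$ and therefore has the same absolute semisimple rank.
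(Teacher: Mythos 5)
Your proposal is correct and is essentially the argument the paper intends (the corollary is left without an explicit proof there): apply Lemma~\ref{ineq-1} to an admissible triple $(\xi,J,K)$, whose existence is guaranteed by the remark preceding the corollary, and bound $\sharp\{\text{$\s$-orbits of }K_\xi\}$ by $\sharp\{\text{$\s$-orbits on }\tS\}-1=\ssrk(\bG)$ using $K_\xi\subseteq K\subsetneq\tS$. The passage to~(c) via $\ssrk(\bJ_\t)\le{\rm rank}^{\rm ss}_{\overline F}(\bJ_\t)={\rm rank}^{\rm ss}_{\overline F}(\bG)$, since $\bJ_\t$ is an inner form of $\bG$ for basic $\t$, is also the standard and intended step.
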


We can now state the following equivalent characterizations of being of Coxeter type. Note that there is an obvious notion of \emph{product of Coxeter data}. We call a Coxeter datum \emph{irreducible}, if it cannot be decomposed as a product in a non-trivial way.

\begin{theorem}\label{thm:characterization-coxeter-type}
    Consider an enhanced Tits datum  $(\bG, \mu, K)$ with corresponding enhanced Coxeter datum $(W_a, \s, \underline{\mu}, K)$. Assume that all components of $\underline{\mu}$ as in~\eqref{eqn:decomposition} are non-central. The following conditions are equivalent:
	\begin{enumerate}
		\item The enhanced Tits datum $(\bG, \mu, K)$ is of Coxeter type;
        \item We have that $\dim X(\mu, \t)_K=\ssrk(\bJ_\t)$;
	    \item For any admissible triple $(\xi, J, K')$ with $K' \supseteq K$, we have that
        \[
            \<\underline{\mu}, 2\rho\> \le \sharp \{\text{$\s$-orbits of $K_\xi'$}\} + \ssrk(\bJ_\t).
        \]
    \item The enhanced Coxeter datum $(W_a, \s, \underline{\mu}, K)$ is a product of irreducible enhanced Coxeter data, where for each factor the Coxeter datum is one of those listed in Table~\ref{table:enhanced-coxeter-data}, and the level structure $K$ contains the minimal one listed in that table. See Section~\ref{notation-autom-dynkin} for the notation.
	\end{enumerate}
\end{theorem}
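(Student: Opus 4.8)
The plan is to establish the cycle of implications $(1)\Rightarrow(2)\Rightarrow(3)\Rightarrow(4)\Rightarrow(1)$, reducing throughout to the case where $\bG$ is quasi-simple and adjoint, so that the decomposition~\eqref{eqn:decomposition} is available and the statement is really about a single $\breve F$-simple factor (up to the $\s$-cyclic action). The implication $(2)\Rightarrow(3)$ is already contained in Lemma~\ref{ineq-1}, and $(2)\Rightarrow(1)$ will come out of the case-by-case analysis, so the real content is organized as follows. First, $(1)\Rightarrow(2)$: if $(\bG,\mu,K)$ is of Coxeter type, then $X(\mu,\t)_K$ is a union of fine Deligne--Lusztig varieties $Y(w)$ attached to $\s$-Coxeter elements $w\in\KCox(\mu)$, and by the description in Section~\ref{subsec:bt-stratification} each $Y(w)$ has dimension $\ell(w\t^{-1}) = \sharp\{\text{$\Ad(\t)\circ\s$-orbits in }\supp_\s(w)\} \le \ssrk(\bJ_\t)$ (the last inequality because $\supp_\s(w)\subsetneq\tS$ by Corollary~\ref{cor-supp} applied in reverse, together with~\cite[\S1.9]{Ko06}). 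Combined with Theorem~\ref{prop:dim-lower-bound} this forces equality. Thus the only genuinely new work is $(3)\Rightarrow(4)$ and $(4)\Rightarrow(1)$.

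The heart of the proof is $(3)\Rightarrow(4)$: assuming the numerical inequality~(a) for all admissible triples $(\xi,J,K')$ with $K'\supseteq K$, classify the possible quadruples $(W_a,\s,\underline\mu,K)$. Here I would proceed type by type through the affine Dynkin diagrams, using Corollary~\ref{ineq-2}(c), namely $\<\underline\mu,2\rho\>\le 2\,{\rm rank}^{\rm ss}_{\overline F}(\bG)$, as the crude first filter: this already cuts the list of possible dominant coweights $\underline\mu$ down to a short one (essentially sums of at most two minuscule fundamental coweights, plus $2\o_1^\vee$ in rank one and a handful of exceptional non-minuscule coweights in low rank). For each surviving pair $(W_a,\s,\underline\mu)$ one then has to determine, for each candidate $\s$-stable $K$, whether inequality~(a) holds for \emph{every} admissible triple over $K$ --- the point being that choosing $\xi$ badly (so that $K_\xi$ is small, i.e., the average $\xi_J^\diamond$ has small support) forces the right-hand side of~(a) down, while the left-hand side $\<\underline\mu,2\rho\>$ is fixed; so~(a) is a genuine constraint that is violated unless $K$ is large enough to ``absorb'' the failure of Hodge--Newton decomposability. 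Explicitly computing $K_\xi$ requires understanding, for the given $\underline\mu$, which Weyl-conjugates $\xi$ satisfy $t^\xi\in{}^{K'}\tW$ (by Lemma~\ref{K-min} this is the positivity condition $\<\xi,\a_s\>\ge 0$ for $s\in K'$) and then computing the support of the $p(\s)$-average of $\xi_J$; this is where the orientation of the local Dynkin diagram, and the precise way $\s$ and $\Ad(\t_i)$ act, enters, and it is the reason the minimal $K$ depends on the enhanced Coxeter datum and not just on $(W_a,\underline\mu)$. One organizes the bookkeeping so that the \emph{minimal} $K$ for which~(a) holds for all triples is read off in each case, producing exactly Table~\ref{table:enhanced-coxeter-data}; Remark~\ref{KK'} then handles all larger $K$.

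For $(4)\Rightarrow(1)$ one goes in the opposite direction: for each entry of Table~\ref{table:enhanced-coxeter-data} with its minimal $K$, exhibit $\KAdm(\mu)_0$ explicitly and check that every element of it is a $\s$-Coxeter element, i.e., $\KCox(\mu)=\KAdm(\mu)_0$; by Remark~\ref{KK'} this then propagates to all larger $K$, and it is closed under products by the obvious compatibility of all the constructions with products of Coxeter data. In practice this is done by first checking fully Hodge--Newton decomposability (each listed $\underline\mu$ is \emph{minute} in the sense of~\cite[Def.~3.2]{GHN}, or one invokes that~$(2)\Rightarrow$ fully HN decomposable is already proved), so that $\KAdm(\mu)_0=\{w\in\Adm(\mu)\cap{}^K\tW : W_{\supp_\s(w)}\text{ finite}\}$, and then running through $\Adm(\mu)$: for each $w$ in it that survives the two conditions, verify that no $\Ad(\t)\circ\s$-orbit on $\tS$ meets a reduced expression of $w\t^{-1}$ twice. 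The Drinfeld case ((i) with $\underline\mu=\o_1^\vee$) worked out in detail in Section~\ref{example:drinfeld} is the template. \textbf{The main obstacle} is the combinatorial case analysis in $(3)\Rightarrow(4)$: there is no shortcut around computing $K_\xi$ for the extremal choices of $\xi$ in each type, and the subtlety flagged in the introduction --- that a diagram automorphism can fail to respect the \emph{oriented} local Dynkin diagram, so that genuinely distinct enhanced Coxeter data sharing the same $(W_a,\underline\mu,K)$ must be distinguished --- means this analysis has to be carried out at the level of enhanced Coxeter data with their $\s$-action, not merely at the level of $(W_a,\underline\mu)$; getting the list of \emph{minimal} $K$'s right (and verifying the surprising uniqueness of the minimal $K$ up to isomorphism) is where the care is required.
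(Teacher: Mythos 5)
Your plan matches the paper's proof essentially step for step: the paper likewise dismisses $(1)\Rightarrow(2)$ as immediate (via the dimension count for twisted Coxeter strata plus Theorem~\ref{prop:dim-lower-bound}), cites Lemma~\ref{ineq-1} for $(2)\Rightarrow(3)$, and carries out $(3)\Rightarrow(4)\Rightarrow(1)$ by exactly the case analysis you describe --- using the crude bounds (b), (c) of Corollary~\ref{ineq-2} to pin down the Coxeter datum (with type $D$ needing the full strength of~(a)), then violating inequality~(a) with explicitly chosen admissible triples to force the minimal $K$, and finally verifying $\KAdm(\mu)_0=\KCox(\mu)$ from Table~\ref{table:enhanced-coxeter-data}. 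No substantive difference in route.
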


From Theorem~\ref{prop:dim-lower-bound} we immediately get:

\begin{corollary}\label{cor:coxeter-is-fullyHN}
    If $(\bG, \mu, K)$ is of Coxeter type, then $(\bG, \mu)$ is fully Hodge--Newton decomposable.
\end{corollary}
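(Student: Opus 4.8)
The plan is to obtain the statement as a direct combination of the two main inputs already at hand: the dimension characterization of Coxeter type (Theorem~\ref{thm:characterization-coxeter-type}) and the equality case of the general lower bound (Theorem~\ref{prop:dim-lower-bound}). Concretely, if $(\bG, \mu, K)$ is of Coxeter type, then the implication $(1)\Rightarrow(2)$ of Theorem~\ref{thm:characterization-coxeter-type} gives $\dim X(\mu, \t)_K = \ssrk(\bJ_\t)$, and the ``moreover'' clause of Theorem~\ref{prop:dim-lower-bound} then yields that $(\bG, \mu)$ is fully Hodge--Newton decomposable. (Fully Hodge--Newton decomposability is of course also one of the two requirements in Definition~\ref{def:coxeter-type}, so the corollary can equally be read off from the definition; phrasing it via Theorem~\ref{prop:dim-lower-bound} simply makes transparent that it is a consequence of the dimension-theoretic characterization, and that the various equivalences are not being used circularly.)

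The only genuine bookkeeping concerns the hypotheses. Theorem~\ref{prop:dim-lower-bound} and Theorem~\ref{thm:characterization-coxeter-type} are stated under the assumption that $\mu$ is non-central in every simple factor of $\bG_{\text{ad}}$ over $F$, whereas the corollary carries no such hypothesis. I would dispose of this exactly as in the proof of Theorem~\ref{prop:dim-lower-bound} and \cite[Section~3.3]{GHN}: reduce to the case that $\bG$ is quasi-simple over $F$ and semisimple of adjoint type, and then note that either $\mu$ is central --- in which case $\Adm(\mu)$ consists of the single length-$0$ element $t^{\underline{\mu}}$, whose $\s$-support is empty, so $(\bG, \mu)$ is trivially fully Hodge--Newton decomposable by condition~(2) of Theorem~\ref{fully-HN-dec} --- or all $\breve F$-simple components of $\underline{\mu}$ are non-central, which is precisely the standing hypothesis of the two theorems above. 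Since fully Hodge--Newton decomposability is insensitive to $K$ (condition~(1) of Theorem~\ref{fully-HN-dec}) and behaves multiplicatively over the factors of the decomposition~\eqref{eqn:decomposition}, reassembling the factors finishes the argument.

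There is essentially no obstacle: all the substance lives in Theorems~\ref{prop:dim-lower-bound} and~\ref{thm:characterization-coxeter-type}, and what remains is the routine reduction to the non-central, quasi-simple adjoint case, together with the compatibility of both notions with products and with enlarging $K$.
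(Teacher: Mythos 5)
Your argument is correct and matches the paper's: the corollary is stated there as an immediate consequence of Theorem~\ref{prop:dim-lower-bound} combined with the implication $(1)\Rightarrow(2)$ of Theorem~\ref{thm:characterization-coxeter-type} (and, as you observe, full Hodge--Newton decomposability is in any case built into Definition~\ref{def:coxeter-type}, so there is no circularity). Your extra handling of the case of central $\mu$ is harmless but not required, since non-centrality is a standing hypothesis in the section where the corollary appears.
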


\begin{table}[h!]
\renewcommand{\arraystretch}{1.7}
\begin{tabular}{@{}l@{\hskip1cm}l@{}}
\makecell{\textbf{Enhanced Coxeter datum}\\[.1cm]
$(\tW, \s, \underline{\mu}, K)$} & \makecell{\\[.1cm] $\KAdm(\mu)_0$} \\
\hline
$(\tilde A_{n-1}, \id, \o_1^\vee, \emptyset)$ & $\{\t\}$ \\
\hline
$(\tilde A_{n-1}, \varrho_{n-1}, \o_1^\vee, \emptyset)$ & $\Adm(\mu)$ \\
\hline
$(\tilde A_{2m}, \varsigma_0, \o_1^\vee, \tS-\{0\})$ & $\{s_{[2m+1, i]} \t;\ m+2 \le i \le 2m+2\}$ \\[.1cm]
\hline
$ (\tilde A_{2m+1}, \varsigma_0, \o_1^\vee, \tS - \{0, m+1\})$ & \makecell{ $\{s_{[2m+2, 2m+2-i]} s_{[m+1, m+1-j]} \t$;\\[.2cm] $i, j \ge -1$ with $i+j \le m-2\}$}\\
\hline
$(\tilde A_1, \id, 2 \o_1^\vee, \emptyset)$ & $\{1, s_0, s_1\}$ \\
\hline

$(\tilde A_{n-1}, \id, \o_1^\vee + \o_{n-1}^\vee, \tS-\{0\})$ for $n \ge 3$ & $\{1\} \sqcup \{s_{[n, i]} s_{[j+1, 1]}\i;\ 0 \le j+1 < i \le n\}$ \\
\hline
$(\tilde A_{n-1} \times \tilde A_{n-1}, {}^1 \varsigma_0, (\o_1^\vee, \o_{n-1}^\vee), \sqcup_{i=1}^2(\tS_i-\{0\}))$ & $\{(s_{[n, i]}, s_{[j, 0]}\i)\t;\ 0 \le j+1 < i \le n+1\}$ \\
\hline
$(\tilde A_3, \id, \o_2^\vee, \{1, 2\})$ & $\{\t, s_0 \t, s_3 \t\}$ \\
\hline
$(\tilde A_3, \varsigma_0, \o_2^\vee, \tS-\{0\})$ & $\{\t, s_0 \t, s_0 s_1 \t, s_0 s_3 \t\}$ \\
\hline
$(\tilde B_n, \id, \o_1^\vee, \tS-\{0, n\})$ & $\{\t s_{[i, 1]} \i s_{[n, j]};\ 0 \le i \le j-2 \le n-1\}$ \\[.1cm]
\hline
$(\tilde B_n, \Ad(\t_1), \o_1^\vee, \tS-\{n\})$ & \makecell{$\{\t, s_n \t, \dots, s_n s_{n-1} \cdots s_2 \t,$\\[.1cm] $s_n s_{n-1} \cdots s_2 s_1 \t, s_n s_{n-1} \cdots s_2 s_0 \t\}$}\\
\hline
$(\tilde C_n, \id, \o_1^\vee, \tS-\{0, n\})$ & $\{s_{[i, 0]} \i s_{[n, j]};\ -1 \le i \le j-2 \le n-1\}$ \\[.1cm]
\hline
$(\tilde C_2, \id, \o_2^\vee, \{0\})$ & $\{\t, s_1 \t, s_2 \t\}$ \\
\hline
$(\tilde C_2, \Ad(\t_2), \o_2^\vee, \{0, 2\})$ & $\{\t, s_1 \t, s_1 s_2 \t, s_1 s_0 \t\}$ \\
\hline
$(\tilde D_n, \id, \o_1^\vee, \tS-\{0, n\})$ & $\{\t s_{[i, 1]} \i s_{[n-1, j]};\ 0 \le i \le j-2 \le n-2\}$ \\[.1cm]
\hline
$(\tilde D_n, \varsigma_0, \o_1^\vee, \tS-\{0\})$ & \makecell{$\{\t, \t s_1, \cdots, \t s_1 s_2 \cdots s_{n-2}$,\\[.1cm] $\t s_1 s_2 \cdots s_{n-2} s_{n-1}, \t s_1 s_2 \cdots s_{n-2} s_n\}$} \\
\hline
\end{tabular}

\bigskip
\caption{The irreducible enhanced Coxeter data of Coxeter type (with the minimal level structure), up to isomorphism.\\
Notation: In type $\tilde A_{n-1}$, by convention we set $s_n=s_0$. We use the labeling of the affine Dynkin diagram as in~\cite{Bour}.
Set $s_{[a, b]}= s_a s_{a-1} \cdots s_b$ if $a \ge b$, and $s_{[a,b]} = 1$ otherwise.
}
\label{table:enhanced-coxeter-data}
\end{table}

\subsection{Strategy}
In Table~\ref{table:enhanced-coxeter-data}, we list the minimal irreducible enhanced Coxeter data (up to isomorphism) satisfying condition (3) of Theorem \ref{thm:characterization-coxeter-type} together with the set $\KAdm(\mu)_0$. It is easy to see that in all these cases, $\KAdm(\mu)_0=\KCox(\mu)$. Therefore, we also have ${}^{K'}\!\Adm(\mu)_0={}^{K'}\Cox(\mu)$ for all $K' \supset K$. This shows $(4) \Rightarrow (1)$. Note that $(1) \Rightarrow (2)$ is obvious and $(2) \Rightarrow (3)$ follows from Lemma \ref{ineq-1}. It remains to show that $(3) \Rightarrow (1)$.

Note that the condition (3), although a bit technical, is the most elementary one among the three conditions and only involves the root system. In the rest of the section, we will analyze the condition (3), and give a classification of the irreducible enhanced Coxeter data that satisfy this condition, and finally show that those cases are of Coxeter type. This finishes the direction $(3) \Rightarrow (1)$.

Our strategy is as follows.

In step (I), we show that the condition (3) implies that the Coxeter datum $(W_a, \s, \underline{\mu})$ is one of those listed in Table~\ref{tab:1}.
%
%
This is done by the inequalities (b) and (c) in Corollary \ref{ineq-2} in most of the cases. The only exception is in Type $D$, where in some cases we have to use the full strength of condition (3).

Note that the cases in this table are the fully Hodge-Newton decomposable cases. We will then further analyze these cases and give a complete classification.

In step (II), we show that the condition (3) implies that for the Coxeter datum in the Table above, the parahoric subgroup $K$ must be as specified in Theorem~\ref{thm:characterization-coxeter-type}/Table~\ref{table:enhanced-coxeter-data}.


\subsection{Step (I): The Coxeter datum $(\tW, \s, \underline{\mu})$}
Recall the decomposition~\eqref{eqn:decomposition}. We will argue on the type of the irreducible affine Dynkin diagram $\tilde \BS_j$, which does not depend on $j$. For $i \in \tilde\BS_j$ we denote by $\o_{i, j}^\vee$ the corresponding fundamental coweight in $\bG_j$. If $r = 1$, we write $\o_i^\vee = \o_{i, 1}^\vee$ for simplicity.

\subsubsection{Exceptional types} We use the inequality (c) to exclude all the exceptional types.

Type $\tilde E_6$: $\<\underline{\mu}, 2 \rho\> \ge \<\o^\vee_{1, j}, 2 \rho\>=16>2 \times 6$.

Type $\tilde E_7$: $\<\underline{\mu}, 2 \rho\> \ge \<\o^\vee_{7, j}, 2 \rho\>=27>2 \times 7$.

Type $\tilde E_8$: $\<\underline{\mu}, 2 \rho\> \ge \<\o^\vee_{8, j}, 2 \rho\>=58>2 \times 8$.

Type $\tilde F_4$: $\<\underline{\mu}, 2 \rho\> \ge \<\o^\vee_{4, j}, 2 \rho\>=16>2 \times 4$.

Type $\tilde G_2$: $\<\underline{\mu}, 2 \rho\> \ge \<\o^\vee_{2, j}, 2 \rho\>=6>2 \times 2$.

\

Now we come to the classical groups.

\subsubsection{Type $\tilde A_{n-1}$} By applying a suitable automorphism, we may assume that $K_i \subset \tS_i - \{0\}$. By (b) we deduce that (up to isomorphism) one of following cases occurs:

(1) $r = 1$ and $\underline{\mu} \in \{\o_k^\vee;  1 \le k \le n-1\}$;

(2) $r = 1$ and $\underline{\mu} \in \{2\o_1^\vee, \o_1^\vee + \o_{n-1}^\vee, 2\o_{n-1}^\vee\}$;

(3) $r = 2$ and $\underline{\mu} = \o_{1, 1}^\vee + \o_{n-1, 2}^\vee$.

In the last two cases, we have by (b) that $\ssrk(\bG) = \ssrk(\bJ_\t) = n-1$, which means that $\underline{\mu} = \o_1^\vee + \o_{n-1}^\vee$ (which equals $2\o^\vee_1$ if $n=2$) and $\s = \id$ or $\underline{\mu} = \o_{1, 1}^\vee + \o_{n-1, 2}^\vee$ and $\s = {}^1\varsigma_0$.

Now we assume $r = 1$. Suppose $\underline{\mu} = \o_i^\vee$ for some $1 \le i \le n-1$. Notice that $\ssrk(\bJ_1) \le \frac{n}{2} $ (resp. $\ssrk(\bJ_\t) \le \frac{n}{2}$) if $\s \neq \id$ (resp. $\Ad(\t) \circ \s \neq \id$). So either $\underline{\mu} \in \{\o_1^\vee, \o_{n-1}^\vee\}$ or $\underline{\mu}=\o^\vee_2$ with $n=4$.

Suppose $\underline{\mu} = \o_1^\vee$ and $\s=\Ad(\t_k)$ for $0 \le k \neq n-1$. Then by (b) we have $$n-1 = \<\underline{\mu}, 2\rho\> \le \ssrk(\bG)+\ssrk(\bJ_\t) = \gcd(n, k) -1 + \gcd(n, k+1) - 1,$$ which implies $k = n-1$ or $k = 0$. Otherwise, we deduce that (up to isomorphism) $\s = \varsigma_0$.

Suppose $\underline{\mu} = \o^\vee_2$ and $n = 4$. We have (up to isomorphism) $\s = \id$, or $\s = \varsigma_0$, or $\s = \Ad(\t_1)$, or $\s = \Ad(\t_1) \circ \s_0$. The last case does not occur since (b) fails. 

\subsubsection{Type $\tilde B_n$ for $n \ge 3$} Here $\<\o^\vee_{i, j}, 2 \rho\>=i (2n-i)$. Therefore $\<\underline{\mu}, 2 \rho\> \le 2n$ implies that $r = 1$ and $\underline{\mu}=\o^\vee_1$. In this case $\s=\id$ or $\s=\Ad(\t_1)$.

\subsubsection{Type $\tilde C_n$ for $n \ge 2$} Here $$\<\o^\vee_{i, j}, 2 \rho\>=\begin{cases} i (2n-i+1), & \text{ if } i \le n-1; \\ \frac{n(n+1)}{2}, & \text{ if } i=n.\end{cases}$$ Therefore $\<\underline{\mu}, 2 \rho\> \le 2n$ implies that $r = 1$ and either $\underline{\mu}=\o^\vee_1$ or $\underline{\mu}=\o^\vee_n$ with $n \le 3$.

If $\underline{\mu}=\o^\vee_1$, then $\<\underline{\mu}, 2 \rho\>=2n$ and hence $\ssrk(\bJ_\t)=n$. Therefore $\s=id$.

For $n=2$ and $\underline{\mu}=\o^\vee_2$, we have $\s=\id$ or $\s=\Ad(\t_2)$.

For $n=3$ and $\underline{\mu}=\o^\vee_3$, we have $\<\underline{\mu}, 2 \rho\>=6$ and hence $\ssrk(\bJ_\t)=3$. Therefore, $\s=\Ad(\t_3)$ and $\<\underline{\mu}, 2\rho\> = 6 > 4 = \ssrk(\bG) + \ssrk(\bJ_\t)$, contradicting (b).

\subsubsection{Type $\tilde D_n$ for $n \ge 4$} Here $$\<\o^\vee_{i, j}, 2 \rho\>=\begin{cases} i (2n-i-1), & \text{ if } i \le n-2; \\ \frac{n(n-1)}{2}, & \text{ if } i=n-1 \text{ or } n.\end{cases}$$ Therefore $\<\underline{\mu}, 2 \rho\> \le 2n$ implies that $r = 1$ and (up to isomorphism) either $\underline{\mu}=\o^\vee_1$ or $\underline{\mu}=\o^\vee_n$ with $n=5$.

If $n=5$ and $\underline{\mu}=\o^\vee_5$, we have $\<\underline{\mu}, 2 \rho\>=10$ and hence $\ssrk(\bJ_\t)=5$. Therefore $\s=\Ad(\t_4)$. But $\<\underline{\mu}, 2 \rho\> = 10 > 6 = \ssrk(\bG) + \ssrk(\bJ_\t)$, contradicting (b).

If $\underline{\mu}=\o^\vee_1$, then $\<\underline{\mu}, 2 \rho\>=2(n-1)$ and $\ssrk(\bJ_\t) \ge n-2$. Thus we have $\s=\id$, $\s=\s_0$ or $\s=\Ad(\t_1)$. Suppose $\s=\Ad(\t_1)$ and $i \notin K$ for some $1 \le i \le n-1$. Let $J = \tS - \{i, \s(i)\}$. Let $\xi = \underline{\mu}$ if $i = 1$ and $\xi = s_i \cdots s_2 s_1(\underline{\mu})$ if $2 \le i \le n-1$. Then $J_\xi = \emptyset$ if $i \in \{1, n-1\}$ and $J_\xi = \{i+1, \dots, n-1, n\}$ otherwise. Hence (a) fails for the admissible triple $(\xi, J, J)$.

\smallskip

In Table~\ref{tab:1} we list the remaining cases together with the $\s$-orbits and the $\Ad(\t) \circ \s$-orbits on $\tS$. This information will be used in the case-by-case analysis in the remainder of this section.

\bigskip
\begin{table}[h!]
\renewcommand{\arraystretch}{1.7}
\begin{tabular}{@{}l@{\hskip1cm}l@{\hskip1cm}l@{}}
\textbf{Types} & \textbf{$\s$-orbits} & \textbf{$\Ad(\t) \circ \s$-orbits}
\\
\hline
$(\tilde A_{n-1}, \id, \o^\vee_1)$ & $\{i\}, \quad 0 \le i \le n-1$ & $\tS$
\\
\hline
$(\tilde A_{n-1}, \varrho_{n-1}, \o^\vee_1)$ & $\tS$ & $\{i\}, \quad 0 \le i \le n-1$
\\
\hline
$(\tilde A_{2 m}, \varsigma_0, \o^\vee_1)$ & \makecell{$\{0\}, \{i, 2m+1-i\}$,\\ $1 \le i \le m$} & \makecell{$\{m+1\}, \{i, 2m+2-i\}$\\ $1 \le i \le m$\\
{\footnotesize (We set $s_{2m+1}=s_0$.)}}
\\
\hline
$(\tilde A_{2 m+1}, \varsigma_0, \o^\vee_1)$ &
\makecell{$\{0\}$, $\{m+1\}$,\\ $\{i, 2m+2-i\}$\\ $1 \le i \le m$} &
\makecell{$\{i, 2m+3-i\}$,\\ $1 \le i \le m+1$\\
{\footnotesize (We set $s_{2m+2}=s_0$.)}}
\\
\hline
$(\tilde A_{2m+1}, \varrho_{n-1} \circ \varsigma_0, \o_1^\vee)$ &
\makecell{$\{i, 2m+3-i\}$,\\ $1 \le i \le m$\\
{\footnotesize (We set $s_{2m+2}=s_0$.)}}&
\makecell{$\{0\}$, $\{m+1\}$,\\ $\{i, 2m+2-i\}$\\ $1 \le i \le m$}
\\
\hline
\makecell{$(\tilde A_{n-1}, \id, \o^\vee_1+\o^\vee_{n-1})$\\for $n \ge 3$} & $\{i\}, \quad 0 \le i \le n-1$ & $\{i\}, \quad 0 \le i \le n-1$
\\
\hline
$(\tilde A_{n-1} \times \tilde A_{n-1}, {}^1\varsigma_0, (\o^\vee_1, \o^\vee_{n-1}))$ & \makecell{ $\{i\} \sqcup \{i\} \subseteq \tS_1 \sqcup \tS_2$\\for $0 \le i \le n-1$} & \makecell{ $\{i\} \sqcup \{i-1\} \subseteq \tS_1 \sqcup \tS_2$ \\ for $0 \le i \le n-1$}
\\
\hline
$(\tilde A_1, \id, 2 \o^\vee_1)$ & $\{0\}, \{1\}$ & $\{0\}, \{1\}$
\\
\hline
$(\tilde A_3, \id, \o^\vee_2)$ & $\{i\}, \quad 0 \le i \le 3$ & $\{0, 2\}$, $\{1, 3\}$
\\
\hline
$(\tilde A_3, \varsigma_0, \o^\vee_2)$ & $\{0\}$, $\{2\}$, $\{1, 3\}$ & $\{0, 2\}$, $\{1\}$, $\{3\}$
\\
\hline
$(\tilde A_3, \varrho_1, \o_2^\vee)$ & $\tS$ & $\tS$
\\
\hline
$(\tilde B_n, \id, \o^\vee_1)$ & $\{i\}, \quad 0 \le i \le n$ & $\{0, 1\}, \{i\}, \quad 2 \le i \le n$
\\
\hline
$(\tilde B_n, \Ad(\t_1), \o^\vee_1)$ & \makecell{$\{0, 1\}, \{i\}$\\ $2 \le i \le n$} & $\{i\}, \quad 0 \le i \le n$
\\
\hline
$(\tilde C_n, \id, \o^\vee_1)$ & $\{i\}, \quad 0 \le i \le n$ &  $\{i\}, \quad 0 \le i \le n$
\\
\hline
$(\tilde C_2, \id, \o^\vee_2)$ & $\{0\}, \{1\}, \{2\}$ & $\{0, 2\}, \{1\}$
\\
\hline
$(\tilde C_2, \Ad(\t_2), \o^\vee_2)$ & $\{0, 2\}, \{1\}$ & $\{0\}, \{1\}, \{2\}$
\\
\hline
$(\tilde D_n, \id, \o^\vee_1)$ & $\{i\}, \quad 0 \le i \le n$ & \makecell{$\{0, 1\}$, $\{n-1, n\}$,\\  $\{i\}$, $2 \le i \le n-2$}
\\
\hline
$(\tilde D_n, \varsigma_0, \o^\vee_1)$ & \makecell{$\{n-1, n\}$, $\{i\}$\\ $0 \le i \le n-2$} & $\{0, 1\}, \{i\}, \quad 2 \le i \le n$
\\
\hline
\end{tabular}

\bigskip
\caption{}
\label{tab:1}
\end{table}

\subsection{Step (II): Exclude certain $K$}

\subsubsection{$(W_a, \s, \underline{\mu}) = (\tilde A_{2m}, \varsigma_0, \o_1^\vee)$} If $K \neq \tS-\{0\}$, then $K \subseteq J = \tS - \{i, 2m+1-i\}$ for some $1 \le i \le m$. Let $\xi=s_i s_{i-1} \cdots s_1(\underline{\mu})=(0, \cdots, 0, 1, 0, \cdots, 0)$ with the $(i+1)$-th entry equal to $1$. Then $J_\xi=\{i+1, i+2, \ldots, 2m-i\}$. Hence the inequality (a) fails for the admissible triple $(\xi, J, J)$.

\subsubsection{$(W_a, \s, \underline{\mu}) = (\tilde A_{2m+1}, \varsigma_0, \o_1^\vee)$} If $\tS-\{0, m+1\} \nsubseteq K$, then $K \subset J = \tS - \{i, 2m+2-i\}$ for some $1 \le i \le m$. Let $\xi=s_i s_{i-1} \cdots s_1(\underline{\mu})=(0, \cdots, 0, 1, 0, \cdots, 0)$ with the $(i+1)$-th entry equal to $1$. Then $J_\xi=\{i+1, i+2, \ldots, 2m+1-i\}$. Hence the inequality (a) fails for the admissible triple $(\xi, J, J)$.

\subsubsection{$(W_a, \s, \underline{\mu}) = (\tilde A_{2m+1}, \varrho_{n-1} \circ \varsigma_0, \o_1^\vee)$} After applying a suitable inner diagram automorphism, we may assume that $K \subseteq J = \tilde \BS - \{i, 2m+1-i\}$ for some $1 \le i \le m$. Let $\xi = s_i s_{i-1} \cdots s_1(\underline\mu) = (0, \dots, 0, 1, 0 \dots, 0)$ otherwise, where the $(i+1)$-th entry equals $1$. Then $J_\xi =\{i+1, i+2, \cdots, 2m-i\}$ otherwise. Hence the inequality (a) fails for the admissible triple $(\xi, J, J)$.

\subsubsection{$(W_a, \s, \underline{\mu}) = (\tilde A_{n-1}, \id, \o_1^\vee + \o_{n-1}^\vee)$ for $n \ge 3$}

If $|K|<n-1$, then after applying the diagram automorphism $\varsigma_0$, we may assume that $K \subseteq K':= \tS - \{0, i\}$ for some $1 \le i \le n-2$. Let $J=\tS-\{0\}$ and $\xi=s_i s_{i-1} \cdots s_1(\underline{\mu})=(0, \cdots, 0, 1, 0, \cdots, 0, -1)$ with the $(i+1)$-th entry equal to $1$ and the $n$-th entry equal to $-1$. Then $K'_\xi=\{i+1, i+2, \ldots, n-1\}$. Hence the inequality (a) fails for the admissible triple $(\xi, J, K')$.

\subsubsection{$(W_a, \s, \underline{\mu}) = (\tilde A_{n-1} \times \tilde A_{n-1}, {}^1\varsigma_0, \o_{1, 1}^\vee + \o_{n-1, 2}^\vee)$}

If $|K_1| < n-1$, then after applying a suitable inner diagram automorphism, we may assume that $K_1 \subseteq \tS_1 - \{0, i\}$ for some $1 \le i \le n-1$. Let $J = \s(J)$ such that $J_1 = \tS_1 -\{0\}$ and $\xi=(\xi_1, \xi_2)$, where $\xi_2 = (0, \dots, 0, -1)$ and $\xi_2 = (0, \cdots, 0, 1, 0, \cdots, 0)$ with the $(i+1)$-th entry equal to $1$. Then $J_\xi = \{i+1, i+2, \ldots, n-1\}$. Hence the inequality (a) fails for the admissible triple $(\xi, J, J)$.

\subsubsection{$(W_a, \s, \underline{\mu}) = (\tilde A_3, \id, \o_2^\vee)$} Suppose $K$ does not contain two consecutive vertices in the affine Dynkin diagram. Then up to a suitable inner diagram automorphism, we may assume that $K \subset K' := \{1, 3\}$. Let $J = \tS - \{0\}$ and $\xi = s_2(\underline{\mu}) = (1, 0, 1, 0)$. Then the inequality (a) fails for the admissible triple $(\xi, J, K')$.

\subsubsection{$(W_a, \s, \underline{\mu}) = (\tilde A_3, \varsigma_0, \o_2^\vee)$}

Suppose $K \subseteq K' := \{1, 3\}$. Let $J = \tS - \{0\}$ and $\xi = s_2(\underline{\mu}) = (1, 0, 1, 0)$. Then the inequality (a) fails for the admissible triple  $(\xi, J, K')$.

Suppose $K \subseteq J := \{0, 2\}$. Let $\xi = s_1 s_2(\underline{\mu}) = (0, 1, 1, 0)$. Then $J_\xi = \emptyset$ and hence the inequality (a) fails for the admissible triple  $(\xi, J, J)$.

\subsubsection{$(W_a, \s, \underline{\mu}) = (\tilde A_3, \Ad(\t_1), \o_2^\vee)$}

The semisimple rank of $\bJ_\t$ is zero and the inequality (b) fails.

\subsubsection{$(W_a, \s, \underline{\mu}) = (\tilde B_n, \id, \o_1^\vee)$}

Suppose $K \subseteq J := \tS - \{i\}$ with $2 \le i \le n-1$. Let $\xi = s_i s_{i-1} \dots s_1(\underline{\mu}) = (0, \dots, 0, 1, 0, \dots, 0)$ with the $(i+1)$-th entry being $1$. Then $J_\xi = \{i+1, i+2, \dots, n\}$ and hence the inequality (a) fails for the admissible triple $(\xi, J, J)$.

\subsubsection{$(W_a, \s, \underline{\mu}) = (\tilde B_n, \Ad(\t_1), \o_1^\vee)$}

Suppose $K \subseteq J := \tS - \{i\}$ with $2 \le i \le n-1$. Let $\xi = s_i s_{i-1} \dots s_1(\underline{\mu}) = (0, \dots, 0, 1, 0, \dots, 0)$ with the $(i+1)$-th entry being $1$. Then $J_\xi = \{i+1, i+2, \dots, n\}$ and hence the inequality (a) fails for the admissible triple $(\xi, J, J)$.

Suppose $K \subseteq J := \tS - \{0, 1\}$. Then $J_{\underline{\mu}} = \emptyset$ and hence the inequality (a) fails for the admissible triple $(\underline{\mu}, J, J)$.

\subsubsection{$(W_a, \s, \underline{\mu}) = (\tilde C_n, \id, \o_1^\vee)$}

If $K \subseteq J := \tS - \{i\}$ for $1 \le i \le n-1$. Let $\xi = s_i s_{i-1} \dots s_1(\underline{\mu}) = (0, \dots, 0, 1, 0, \dots, 0)$ with the $(i+1)$-th entry being $1$. Then $J_\xi = \{i+1, i+2, \dots, n\}$ and hence the inequality (a) fails for the admissible triple $(\xi, J, J)$.

\subsubsection{$(W_a, \s, \underline{\mu}) = (\tilde C_2, \id, \o_2^\vee)$}

Suppose $K \subseteq K' := \{1\}$. Let $J = \{1, 2\}$ and $\xi = s_2(\underline{\mu})$. Then the inequality (a) fails for the admissible triple $(\xi, J, K')$.

\subsubsection{$(W_a, \s, \underline{\mu}) = (\tilde C_2, \Ad(\t_2), \o_2^\vee)$}
Suppose $K \subseteq J := \{1\}$. Then $J_{\underline{\mu}} = \emptyset$ and hence the inequality (a) fails for the admissible triple $(\underline{\mu}, J, J)$.

\subsubsection{$(W_a, \s, \underline{\mu}) = (\tilde D_n, \id, \o_1^\vee)$}

Suppose $K \subseteq J := \tS - \{i\}$ for $2 \le i \le n-1$. Let $\xi = s_i s_{i-1} \dots s_1(\underline{\mu}) = (0, \dots, 0, 1, 0, \dots, 0)$ with the $(i+1)$-th entry being $1$. Then $J_\xi = \{i+1, i+2, \dots, n\}$ and hence the inequality (a) fails for the admissible triple $(\underline{\mu}, J, J)$.

Suppose $K \subseteq K' := \tS - \{0, 1\}$. Let $J = \tS - \{0\}$ and $\xi = s_1(\underline{\mu})$. Then the inequality (a) fails for the admissible triple $(\underline{\mu}, J, K')$.

\subsubsection{$(W_a, \s, \underline{\mu}) = (\tilde D_n, \varsigma_0, \o_1^\vee)$}

Suppose $K \subseteq J := \tS - \{i\}$ for $2 \le i \le n-2$. Let $\xi = s_i s_{i-1} \dots s_1(\underline{\mu}) = (0, \dots, 0, 1, 0, \dots, 0)$ with the $(i+1)$-th entry being $1$. Then $J_\xi = \{i+1, i+2, \dots, n\}$ and hence the inequality (a) fails for the admissible triple $(\underline{\mu}, J, J)$.

Suppose $K \subseteq K' := \tS - \{0, 1\}$. Let $J = \tS - \{0\}$ and $\xi = s_1(\underline{\mu})$. Then the inequality (a) fails for the admissible triple $(\underline{\mu}, J, K')$.

Suppose $K \subseteq J := \tS - \{n-1, n\}$. Let $\xi = (s_{n-1} \cdots s_1(\underline{\mu}) = (0, \dots, 0, 1)$. Then $J_\xi = \emptyset$ and hence the inequality (a) fails for the admissible triple $(\xi, J, J)$.

\subsection{Step (III): Final verification}

To finish the proof of Theorem~\ref{thm:characterization-coxeter-type}, it only remains to check in each case that the set $\KAdm(\mu)_0$ is the set given in Table~\ref{table:enhanced-coxeter-data}. From the explicit description, we see that these cases are of Coxeter type. We omit the explicit computations.

\section{Consequences for RZ spaces}\label{sec:rz-spaces}


In this section, we explain consequences of our results for Rapoport--Zink
spaces. Also compare the paper~\cite{Wang4} by H.~Wang for applications to
Shimura varieties.

\subsection{Definitions}

We consider the situation where $F = \mathbb Q_p$,
and where the pair $(\bG, \mu)$ corresponds to a Rapoport--Zink space. As before, we consider the basic case, i.e., we denote by $b$ the basic $\s$-conjugacy class in $B(\bG, \mu)$.

Since there are different constructions of RZ spaces in the PEL case and the more general case of Hodge type, we will axiomatize the properties that we require, rather than fixing one of the constructions.

In all cases, the RZ space $\mathcal M(\bG, \mu, b)_K$ is a formal scheme over $\breve O$, the ring of integers of $\breve F$. We denote by $\kk$ the residue class field of $\breve F$, and by a subscript $-_{\kk}$ indicate the base change to $\kk$.

Note that the results in the previous sections are group-theoretic in nature and hence concern parahoric level structures, but the known constructions of RZ spaces work for stabilizers of facets in the Bruhat--Tits building. See the discussion in Section~\ref{parahoric-vs-stabilizer}. To take this possible difference into account right from the beginning, we change notation as follows: We denote by $\Pstab$ the stabilizers of facets of the base alcove, and by $\Pcirc$ the corresponding parahoric.

The setup of the theory entails that $\Pstab$ and $\Pcirc$ are always defined over $F$, i.e., fixed by $\s$.

We denote by $\Gr_{\Pcirc}$ the partial affine flag variety for $\Pcirc$ (i.e., with $\kk$-valued points $\breve{G}/\Pcirc$), and similarly by $\Gr_{\Pstab}$ the ``partial affine flag variety'' for $\Pstab$ with $\kk$-valued points $\breve{G}/\Pstab$. Let $\pi\colon \Gr_{\Pcirc}\to \Gr_{\Pstab}$ denote the projection.

Since $\Pcirc \subseteq \Pstab$ is a normal subgroup, the (finite) quotient group $\Pstab/\Pcirc$ acts on $\Gr_{\Pcirc}$ on the right by $g\Pcirc \cdot p = gp\Pcirc$, $p\in \Pstab/\Pcirc$, and $\Gr_P$ is the quotient by this action. Let $\breve{\kappa}\colon \brG\to \pi_0(\Gr_{\Pcirc}) = \pi_1(\bG)_{\Gamma_0}$ be the Kottwitz homomorphism. Since $\Pcirc$ is the kernel of $\breve{\kappa}_{|\Pstab}$, and correspondingly, the parahoric group scheme corresponding to $\Pcirc$ is the connected component of the ``stabilizer group scheme'' corresponding to $\Pstab$, the restriction of $\pi$ to any connected component of $\Gr_{\Pcirc}$ is an isomorphism onto a connected component of $\Gr_{\Pstab}$; cf.~\cite[Thm.~1.4, App.~Prop.~3]{Pappas-Rapoport:Twisted}. In other words, $\pi$ identifies those connected components which are mapped to each other by $P$.

In this way, we obtain a perfect scheme $\Gr_{\Pstab}$ with $\kk$-valued points $\brG/\Pstab$ and such that the projection $\pi\colon \Gr_{\Pcirc}\to \Gr_{\Pstab}$ is an isomorphism when restricted to a connected component of $\Gr_{\Pcirc}$.

We write
\[
    X_{\Pstab} := X(\mu, b)_{\Pstab} = \{ g\in \Gr_{\Pstab};\ g^{-1}b\sigma(g) \in P \Adm(\mu) P\},
\]
which also inherits the structure of a perfect scheme.

Similarly as in~\cite{GHN}, we consider the following condition:

\bigskip
($\diamondsuit$)
For facet stabilizers $\Pstab\subset \Pstab'$, we have a projection $\mathcal M(\bG, \mu,
b)_\Pstab\to \mathcal M(\bG, \mu, b)_{\Pstab'}$, and there are isomorphisms
\[
    \mathcal M(\bG, \mu, b)_{\Pstab, \kk}^{p^{-\infty}} \cong X(\mu, b)_\Pstab
\]
of perfect schemes, compatible with the projections for inclusions $\Pstab\subset \Pstab'$.
\bigskip

The second condition that we need to impose is the following compatibility
between the RZ spaces for levels $\Pstab_i$ and the RZ space attached to
the intersection $\Pstab:=\bigcap_i \Pstab_i$.
It follows from property~($\diamondsuit$) that the morphism
$\mathcal M(\bG, \mu, b)_{K, \kk}^{\rm red}\to\prod_i \mathcal M(\bG, \mu, b)_{K_i, \kk}^{\rm red}$ is a homeomorphism onto a closed subscheme of its target. We will impose the following stronger statement as our second axiom:

\bigskip
($\clubsuit$) The natural morphism $\mathcal M(\bG, \mu, b)_{\Pstab, \kk}^{\rm red}\to \prod_i \mathcal M(\bG, \mu, b)_{\Pstab_i, \kk}^{\rm red}$ is a closed immersion of $\kk$-schemes.
\bigskip

\subsection{The PEL case}
For most RZ spaces of PEL type, it is known that the assumptions ($\diamondsuit$) and ($\clubsuit$) are satisfied.

Consider an RZ space attached to a PEL datum as in~\cite[Ch.~3]{Rapoport-Zink},
see also~\cite{Hartwig} and~\cite{Rapoport-Viehmann} for summaries and further
discussions. Let $\bG$ and $\mu$ be the group and cocharacter attached to it. In
this context, the level structure is given by a polarized chain of lattices
(i.e., we use a lattice model for the relevant Bruhat--Tits building). Denote by
$\Pstab \subset \breve G$ the stabilizer of this fixed standard chain.

Denote by $\mathcal M^{\rm naive}_\Pstab$ the corresponding RZ space
(\cite[Def.~3.21]{Rapoport-Zink}) over $O_{\breve E}$, the ring of integers of
the completion of the maximal unramified extension of the local reflex field
$E$. It depends on the choice of a ``framing object'' $\mathbf X$ (a $p$-divisible group with additional structure corresponding to the group $\mathbf G$) and parameterizes pairs $((X_\Lambda), (\rho_\Lambda))$, where $(X_\Lambda)$ is a chain of isogenies of $p$-divisible groups (over a scheme $S$ on which $p$ is locally nilpotent) indexed by the fixed lattice chain, and $\rho_\Lambda$ is a quasi-isogeny between $X_\Lambda$ and $\mathbf X$ over the closed subscheme $V(p)\subseteq S$. These data are required to satisfy certain compatibilities, see~\cite[Def.~3.12]{Rapoport-Zink}.

It is clear that property ($\diamondsuit$) cannot in general be expected to
hold for the ``naive'' RZ spaces, because their special fiber in general
comprises strata not reflected in the set $\Adm(\mu)$.  Therefore we pass to the
corresponding ``flat RZ space''.

As shown in~\cite{Rapoport-Zink}, the space $\mathcal M^{\rm naive}_\Pstab$
admits a \emph{local model diagram}
\[
    \mathcal M^{\rm naive}_\Pstab \longleftarrow \widetilde{\mathcal M^{\rm naive}_\Pstab} \longrightarrow M^{{\rm naive}, \wedge}_\Pstab,
\]
where $M^{\rm naive}_\Pstab$ denotes the local model
of~\cite[Def.~3.27]{Rapoport-Zink} and $-^\wedge$ denotes the $p$-adic
completion. See~\cite[Ch.~3]{Rapoport-Zink}, in particular Sections~3.26--3.35.
Denote by $M^{\rm flat}_\Pstab$ the flat closure inside $M^{\rm naive}_\Pstab$ of its generic fiber.

Then $\mathcal M_\Pstab$ is defined by pulling back and pushing forward the inclusion $M^{\rm flat}_\Pstab \subseteq M^{\rm naive}_\Pstab$ along the local model diagram (after passing to the completion along the special fiber), i.e., we have a diagram
\[
    \xymatrix{
        \mathcal M_\Pstab \ar[d]& \ar[l]\widetilde{\mathcal M}_\Pstab \ar[r]\ar[d] & M^{{\rm loc}, \wedge}_\Pstab\ar[d]\\
        \mathcal M^{\rm naive}_\Pstab & \ar[l]\widetilde{\mathcal M^{\rm naive}_\Pstab} \ar[r] & M^{{\rm naive}, \wedge}_\Pstab
}
\]
where the vertical arrows are closed immersions and both squares are cartesian. Then $\mathcal M_\Pstab$ is flat over $O_{\breve E}$.

Forgetting the endomorphism and polarization structure, we obtain a closed embedding into the corresponding RZ space for the general linear group and the same lattice chain, now considered without additional structure. We will denote this space by $\mathcal M'_{\Pstab'}$ (imitating the notation of~\cite{Hamacher-Kim}), in particular $P'$ is the stabilizer of our lattice chain inside the general linear group $G' = GL(\mathbf N)(\brQp)$ of automorphisms of the rational Dieudonn\'e module $\mathbf N$ of $\mathbf X$.

By Dieudonn\'e theory, we obtain a commutative diagram of inclusions
\begin{equation}\label{eqn:embedding-GL}
    \xymatrix{
        \mathcal M_\Pstab(\kk) \ar[r] & \mathcal M^{\rm naive}_\Pstab(\kk) \ar[r]\ar[d] & \mathcal M'_{\Pstab'}(\kk) \ar[d]\\
                                      & \brG/\Pstab \ar[r] & G'/\Pstab'
    }
\end{equation}
Here the right vertical map maps a point $(X_\bullet, \rho)\in \mathcal M'_{\Pstab'}$, where $X_\bullet$ is a chain of isogenies of $p$-divisible groups indexed by the fixed periodic lattice chain, and $\rho$ is a quasi-isogeny with the framing object $\mathbb X$, to $g\Pstab'$ where $g$ maps the fixed (partial) standard lattice chain to the chain of Dieudonn\'e modules of $X_\bullet$ (considered inside the rational Dieudonn\'e module of $\mathbb X$ via $\rho$). Since in the $GL_n$ case the vertical map is induced by an isomorphism of perfect schemes onto its image (cf.~\cite[Prop.~3.11]{Zhu} which easily generalizes to general parahoric level structure in that case), the same is true for this map.

The map $\mathcal M^{\rm naive}_\Pstab(\kk)\to \mathcal M'_{\Pstab'}(\kk)$ which forgets the additional structure is an inclusion, because the additional structure is uniquely determined (by that structure on $\mathbf X$ and the quasi-isogenies $\rho_\Lambda$), if it exists. Cf.~\cite[Proof of Thm.~3.25]{Rapoport-Zink}.

The Frobenius morphism on $\mathbf N$ is given by $b\s$ for some $b\in \brG$.

Consider a point $(\mathcal F_\Lambda)_\Lambda\in M^{\rm naive}(\kk)$. By definition, each $\mathcal F_\Lambda$ is a subspace of $\Lambda\otimes_{\mathbb Z_p}\kk$ (with further properties which we do not state here; in comparison to~\cite[Def.~3.27]{Rapoport-Zink} we switch from quotients to subspaces). Equivalently, we can record this data as a lattice $\tilde{\mathcal F}$ lying between $\Lambda$ and $p\Lambda$. This defines an inclusion $M^{\rm naive}(\kk) \to \brG/P$, and the action of $\Pstab$ on $\brG/\Pstab$ on the left preserves the subset $M^{\rm naive}(\kk)$.

By the definition of the local model diagram we obtain a commutative diagram
\[
    \xymatrix{
        \mathcal M_P^{\rm naive}(\kk) \ar[r]\ar[d] & \Pstab\backslash M^{\rm naive}(\kk)\ar[d]\\
        \brG/P \ar[r] & \Pstab \backslash \brG / \Pstab
    }
\]
where the lower horizontal map maps $g\Pstab$ to the double coset of $g^{-1}b\s(g)$. Likewise, $\Pstab$ acts on $M_\Pstab^{\rm loc}(\kk)\subseteq M^{\rm naive}(\kk)$ since this action comes from an action of the (smooth) stabilizer group scheme associated with $P$ on the $O_{\breve E}$-scheme $M^{\rm naive}$. We write $\mathcal A(\mu)_\Pstab = \Pstab\backslash M^{\rm loc}(\kk) \subset \Pstab\backslash \brG / \Pstab$.
Similarly, we write $\Adm(\mu)_\Pstab = \Pstab\backslash \Pstab\Adm(\mu)\Pstab/\Pstab$.


\begin{proposition}\label{prop:diamond-for-PEL}
    If $\mathcal A(\mu)_\Pstab = \Adm(\mu)_\Pstab$, then assumption ($\diamondsuit$) is satisfied. More precisely, the inclusion $\mathcal M_P(\kk) \subset \breve G/\Pstab$ is induced by an isomorphism
    \[
        \mathcal M_{\Pstab, \kk}^{p^{-\infty}} \cong X(\mu, b)_\Pstab
    \]
    of perfect schemes, and these isomorphisms are compatible with the projections for passing to sub-lattice chains.
\end{proposition}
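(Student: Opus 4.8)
The plan is to reduce the statement to an equality of $\kk$-valued points inside $\brG/\Pstab$ and then to read off that equality from the local model diagram together with the hypothesis $\mathcal A(\mu)_\Pstab=\Adm(\mu)_\Pstab$. By the discussion preceding the proposition, the inclusion $\mathcal M^{\rm naive}_\Pstab(\kk)\subseteq\brG/\Pstab$ of~\eqref{eqn:embedding-GL} is induced by an immersion of perfect schemes; since $\mathcal M_\Pstab\subseteq\mathcal M^{\rm naive}_\Pstab$ is a closed immersion, the same holds for $\mathcal M_\Pstab$, so $\mathcal M_{\Pstab,\kk}^{p^{-\infty}}$ is isomorphic to a locally closed subscheme $Z\subseteq\Gr_\Pstab$ with $Z(\kk)=\mathcal M_\Pstab(\kk)$, and $Z$ is automatically reduced because $\Gr_\Pstab$ is a perfect scheme. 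On the other hand $X(\mu,b)_\Pstab\subseteq\Gr_\Pstab$ is by definition the reduced locally closed subscheme with $\kk$-points $\{g\Pstab;\ g^{-1}b\s(g)\in\Pstab\Adm(\mu)\Pstab\}$. Since $\kk$ is algebraically closed and $\Gr_\Pstab$ is (perfectly) locally of finite type, a reduced locally closed subscheme of $\Gr_\Pstab$ is determined by its set of $\kk$-points; hence it is enough to prove
\[
    \mathcal M_\Pstab(\kk)=\{\,g\Pstab\in\brG/\Pstab;\ g^{-1}b\s(g)\in\Pstab\Adm(\mu)\Pstab\,\},
\]
after which $Z=X(\mu,b)_\Pstab$ and the desired isomorphism is the one induced by the inclusion $\mathcal M_\Pstab(\kk)\subseteq\brG/\Pstab$.

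To prove this equality of $\kk$-points I would run through the local model diagram. By construction $\mathcal M_\Pstab$ arises from $\mathcal M^{\rm naive}_\Pstab$ by pulling back the closed immersion $M^{\rm loc}_\Pstab\subseteq M^{\rm naive}_\Pstab$ along the (completed) local model diagram, so $\mathcal M_\Pstab(\kk)$ is the preimage of $\mathcal A(\mu)_\Pstab=\Pstab\backslash M^{\rm loc}(\kk)$ under the map $\mathcal M^{\rm naive}_\Pstab(\kk)\to\Pstab\backslash M^{\rm naive}(\kk)$ of the local model diagram. The commutative diagram recalled just before the proposition identifies that map, after passing to $\brG/\Pstab$, with $g\Pstab\mapsto\Pstab g^{-1}b\s(g)\Pstab$. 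Hence a $\kk$-point over $g\Pstab$ lies in $\mathcal M_\Pstab(\kk)$ if and only if $\Pstab g^{-1}b\s(g)\Pstab\in\mathcal A(\mu)_\Pstab$, equivalently (by hypothesis) $g^{-1}b\s(g)\in\Pstab\Adm(\mu)\Pstab$. For the reverse inclusion one must check that every $g\Pstab$ with $g^{-1}b\s(g)\in\Pstab\Adm(\mu)\Pstab$ lifts to $\mathcal M^{\rm naive}_\Pstab(\kk)$: as $\mu$ is minuscule, such a lattice chain is a chain of Dieudonn\'e modules inside the rational Dieudonn\'e module $\mathbf N$ of $\mathbf X$, and as $g\in\brG$ it preserves the polarization form up to a unit scalar and the $\mathcal O_B$-action, so the chain carries the required polarization and endomorphism structure; this yields a $\kk$-point of $\mathcal M^{\rm naive}_\Pstab$ over $g\Pstab$, which by the previous step lies in $\mathcal M_\Pstab(\kk)$. (The additional structure is unique whenever it exists, cf.~\cite[Proof of Thm.~3.25]{Rapoport-Zink}, so no ambiguity enters.)

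For the compatibility with the projections: given an inclusion $\Pstab\subseteq\Pstab'$ of facet stabilizers arising from forgetting part of the lattice chain, both the projection $\mathcal M(\bG,\mu,b)_\Pstab\to\mathcal M(\bG,\mu,b)_{\Pstab'}$ and the projection $X(\mu,b)_\Pstab\to X(\mu,b)_{\Pstab'}$ induced by $\pi\colon\Gr_\Pstab\to\Gr_{\Pstab'}$ are, on $\kk$-points, compatible with the inclusions into $\brG/\Pstab$ and $\brG/\Pstab'$ used above; since the isomorphisms just constructed are exactly those induced by these inclusions, the square relating the two isomorphisms to the two projections commutes.

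The step I expect to be the main obstacle is the middle one: matching, on $\kk$-points, the \emph{geometric} defining condition of $\mathcal M_\Pstab$ --- that the Hodge filtration of the universal $p$-divisible group lie in the flat local model $M^{\rm loc}_\Pstab$ --- with the \emph{group-theoretic} affine Deligne--Lusztig condition $g^{-1}b\s(g)\in\Pstab\Adm(\mu)\Pstab$. This rests on two inputs that need care. First, the compatibility of the local model diagram with crystalline Dieudonn\'e theory, namely that $\mathcal M^{\rm naive}_\Pstab(\kk)\to\Pstab\backslash M^{\rm naive}(\kk)$ records the relative position of the Hodge filtration in the Dieudonn\'e lattice chain and is hence computed by $g\mapsto\Pstab g^{-1}b\s(g)\Pstab$ --- this is precisely the commutative diagram recalled before the proposition, proved by unwinding the two functors involved. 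Second, the hypothesis $\mathcal A(\mu)_\Pstab=\Adm(\mu)_\Pstab$ itself, which is what converts the flat-closure definition of $M^{\rm loc}_\Pstab$ into the combinatorial description by the $\mu$-admissible set. Granting these, the remaining arguments are formal.
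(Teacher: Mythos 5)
Your overall architecture is the same as the paper's: reduce to an equality of $\kk$-points (legitimate because the embedding into the ambient $GL$-situation shows the inclusion comes from a morphism of perfect schemes), obtain $\mathcal M_\Pstab(\kk)\subseteq X(\mu,b)_\Pstab$ from the local model diagram together with the hypothesis $\mathcal A(\mu)_\Pstab=\Adm(\mu)_\Pstab$, and then prove the reverse inclusion by lifting a point of $X(\mu,b)_\Pstab$ to the naive RZ space. The forward inclusion and the compatibility with the projections are fine as you present them.

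The gap is in the reverse inclusion, and it sits exactly in the sentence ``as $\mu$ is minuscule, such a lattice chain is a chain of Dieudonn\'e modules.'' The condition $g^{-1}b\s(g)\in\Pstab\Adm(\mu)\Pstab$ is phrased in the Iwahori--Weyl group of $\bG$; to conclude that each lattice $g\Lambda$ of the chain satisfies $p\,g\Lambda\subseteq F(g\Lambda)\subseteq g\Lambda$ you need to know that the image of $\Adm(\mu)_\Pstab$ in $\Pstab'\backslash G'/\Pstab'$ lands inside the admissible set $\Adm(\mu')_{\Pstab'}$ of the ambient general linear group. This compatibility of admissible sets is not formal (it compares Bruhat closures in two different Iwahori--Weyl groups), and it is precisely the step the paper works to establish: it is deduced from the chain of inclusions $M^{\rm loc}_\Pstab(\kk)\subseteq M^{\rm naive}_\Pstab(\kk)\subseteq M^{GL}_{\Pstab'}(\kk)$ together with a \emph{second} application of the hypothesis $\mathcal A(\mu)_\Pstab=\Adm(\mu)_\Pstab$ and the known equality of the two sets for $GL_n$. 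In addition, membership in $\mathcal M^{\rm naive}_\Pstab(\kk)$ requires the Kottwitz determinant condition, which you never verify; the paper disposes of it by observing that it can be checked on the local model and holds on $M^{\rm naive}$, hence a fortiori on $M^{\rm loc}$, whose $\kk$-points are $\mathcal A(\mu)_\Pstab=\Adm(\mu)_\Pstab$. So the proposal is repairable, but the one assertion you wave through is where the hypothesis is used again and where the real content of the paper's proof lies; your closing paragraph identifies the local-model-to-Dieudonn\'e compatibility as the delicate input but misses that the group-changing comparison of admissible sets is the other one.
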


\begin{proof}
    Since we know (from the embedding into a $GL$ situation) that the map is
    induced from a morphism of perfect schemes, it is enough to check the claim
    on $\kk$-valued points.

    The above discussion shows, together with our assumption $\mathcal A(\mu)_\Pstab = \Adm(\mu)_\Pstab$ and the definition of $M^{\rm loc}_\Pstab$, that we have a diagram
    \[
        \xymatrix{
            \mathcal M_\Pstab(\kk) \ar[r]\ar[d] & \Pstab\backslash M^{\rm loc}(\kk) \ar[r]^{=}\ar[d] & \Adm(\mu)_\Pstab\ar[d] \\
    \mathcal M^{\rm naive}_\Pstab(\kk) \ar[r] & \Pstab\backslash M^{\rm naive}(\kk) \ar[r] & \Pstab\backslash\brG/\Pstab
}
    \]
    in which both squares are cartesian. Viewing $\mathcal M^{\rm naive}_\Pstab(\kk)\subset \brG/\Pstab$ as before, the lower horizontal map is given by $g\Pstab\mapsto \Pstab g^{-1}b\s(g)\Pstab$. So we see that $\mathcal M_\Pstab(\kk) \subseteq X(\mu, b)_\Pstab$, and that it is enough to show that $X(\mu, b)_\Pstab\subseteq \mathcal M^{\rm naive}_\Pstab(\kk)$ in order to complete the proof.

    For this inclusion, note that the square in the diagram~\eqref{eqn:embedding-GL}, while not cartesian in general, is close to being cartesian. More precisely, $\mathcal M^{\rm naive}_\Pstab(\kk)$ is defined inside the intersection $\mathcal M'_{\Pstab'}(\kk)\cap \brG/\Pstab$ by imposing the Kottwitz determinant condition. Since this condition can be checked on the local model, and since it is satisfied by definition on $M^{\rm naive}$, and a fortiori on $M^{\rm loc}$, it is enough to show that $X(\mu, b)_\Pstab\subseteq \mathcal M'_{\Pstab'}$.

Denote by $\mu'$ the composition of $\mu$ with the inclusion $\mathbf G\to GL(\mathbf N)$. Since the identification of $\mathcal M'_{\Pstab'}(\kk)$ with the corresponding generalized affine Deligne-Lusztig variety $X^{GL}(\mu', b)_{\Pstab'}$ for the general linear group is easy to check, we see that is is enough to show that $X_P$ embeds into $X^{GL}(\mu', b)_{\Pstab'}$ under the embedding $\breve G/\Pstab\subseteq G'/\Pstab'$. This follows once we can prove that $\Adm(\mu)_\Pstab$ embeds into the admissible set $\Adm(\mu')_{\Pstab'}\subset \Pstab'\backslash G'/\Pstab'$.

    This compatibility of admissible sets follows from the inclusions $M^{\rm loc}_\Pstab(\kk) \subseteq M^{\rm naive}_\Pstab(\kk) \subseteq M^{GL}_{\Pstab'}(\kk)$, where $M^{GL}_{\Pstab'}$ denotes the local model for the general linear group, using once more the assumption $\mathcal A(\mu)_\Pstab = \Adm(\mu)_\Pstab$ and the fact that for the general linear group the corresponding equality is known, as well.

    It is clear that everything above is compatible with the projections arising
    from forgetting some of the lattices in our chain.
\end{proof}



\begin{remark}\label{rmk:axioms-for-PEL}
    The condition $\mathcal A(\mu)_\Pstab = \Adm(\mu)_\Pstab$ is known to hold in many cases. Note that almost the same condition is posed as Axiom~3.2 in~\cite{HR:axioms}; the only difference is that in our setting we can (and need to) be a little bit more precise as to how this identification arises.
    \begin{enumerate}
        \item
            Assume that $p$ is odd, $\bG/\mathbb Q_p$ is connected and splits over
            a tamely ramified extension and that the stabilizer $\Pstab$ of our
            lattice chain is a parahoric subgroup. Then by the work of Pappas
            and Zhu~\cite[Thm.~1.1, Thm.~1.2]{Pappas-Zhu}, in many individual
            cases, its predecessors), the special fiber of $M^{\rm flat}_\Pstab$
            is the union of Schubert varieties (in an equal characteristic
            affine Grassmannian) indexed by the admissible set
            $\Adm(\mu)_\Pstab$. See also loc.~cit., Section~8.2.
        \item
            The condition has been checked in many individual cases, including
            cases where the stabilizer $\Pstab$ is not parahoric. Specifically,
            see Smithling's papers~\cite{Smithling:ram-unit-odd},
            \cite{Smithling:ram-unit-even} for ramified unitary groups, and
            ~\cite{Smithling:orth} for split even orthogonal groups.
    \end{enumerate}
\end{remark}

\begin{proposition}\label{prop:club-for-PEL}
    For RZ spaces  $\mathcal M_\Pstab$ of PEL type the projections in condition {\rm ($\diamondsuit$)} exist and condition ($\clubsuit$) is satisfied.
\end{proposition}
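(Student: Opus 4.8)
The plan is to argue moduli-theoretically, using the forgetful morphisms between the Rapoport--Zink spaces attached to a polarized multichain of lattices and to a sub-multichain. Recall that the level structure $\Pstab$ for a PEL-type RZ space is the stabilizer of such a multichain $\mathcal L$, that an inclusion $\Pstab\subseteq\Pstab'$ of facet stabilizers corresponds to an inclusion $\mathcal L'\subseteq\mathcal L$, and that $\Pstab=\bigcap_i\Pstab_i$ corresponds to the multichain generated by $\bigcup_i\mathcal L_i$. By \cite[\S3]{Rapoport-Zink}, restricting an $\mathcal L$-set of $p$-divisible groups (together with its framing quasi-isogeny) to a sub-multichain defines a proper morphism $\mathcal M^{\rm naive}_{\Pstab}\to\mathcal M^{\rm naive}_{\Pstab'}$, compatibly for chains of inclusions, and the same construction applies to the naive local models.

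To obtain the projections in ($\diamondsuit$), I would first note that the forgetful morphism of naive local models $M^{\rm naive}_{\Pstab}\to M^{\rm naive}_{\Pstab'}$ restricts to an isomorphism on generic fibres: over the local reflex field the transition maps in a lattice multichain become isomorphisms, so the naive local model of $\mathcal L$ has the same generic fibre as that of $\mathcal L'$. Since $M^{\rm flat}_{\Pstab}$ is $O_{\breve E}$-flat, its generic fibre is schematically dense in it; as the forgetful morphism maps this generic fibre isomorphically onto the generic fibre of $M^{\rm flat}_{\Pstab'}$, it carries $M^{\rm flat}_{\Pstab}$ into $M^{\rm flat}_{\Pstab'}$. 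Thus the forgetful morphism is compatible with the two local model diagrams after passing to the flat local models, and therefore descends to a morphism $\mathcal M_{\Pstab}\to\mathcal M_{\Pstab'}$ of the flat RZ spaces, compatibly for chains of inclusions; in particular this produces the morphisms $\mathcal M_{\Pstab}\to\mathcal M_{\Pstab_i}$ entering the statement of ($\clubsuit$).

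For ($\clubsuit$) I would proceed in three steps. First, the natural morphism $\mathcal M^{\rm naive}_{\Pstab}\to\prod_i\mathcal M^{\rm naive}_{\Pstab_i}$ is a closed immersion: it is proper, since each of its components is proper and the target is separated, and it is a monomorphism, since an $\mathcal L$-set of $p$-divisible groups together with its framing quasi-isogeny is determined by the tuple of its restrictions to the $\mathcal L_i$ (and likewise for morphisms of such data); and a proper monomorphism is a closed immersion. Second, $\mathcal M_{\Pstab}\hookrightarrow\mathcal M^{\rm naive}_{\Pstab}$ and each $\mathcal M_{\Pstab_i}\hookrightarrow\mathcal M^{\rm naive}_{\Pstab_i}$ are closed immersions, compatibly with the forgetful maps by the previous paragraph, so the morphism $\mathcal M_{\Pstab}\to\prod_i\mathcal M_{\Pstab_i}$ becomes a closed immersion after composing with the closed immersion $\prod_i\mathcal M_{\Pstab_i}\hookrightarrow\prod_i\mathcal M^{\rm naive}_{\Pstab_i}$ and is therefore itself a closed immersion (cancellation along a monomorphism). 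Third, base change along $O_{\breve E}\to\kk$ preserves closed immersions, and since $\kk$ is algebraically closed the product $\prod_i\mathcal M^{\rm red}_{\Pstab_i,\kk}$ of the reduced special fibres is itself reduced and hence coincides with $(\prod_i\mathcal M_{\Pstab_i,\kk})^{\rm red}$; passing to reduced subschemes then yields that $\mathcal M^{\rm red}_{\Pstab,\kk}\to\prod_i\mathcal M^{\rm red}_{\Pstab_i,\kk}$ is a closed immersion, which is ($\clubsuit$).

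The main obstacle is the moduli-theoretic input in the first step: one must check that an $\mathcal L$-set of $p$-divisible groups for the generated multichain is genuinely reconstructed from its restrictions to the $\mathcal L_i$ --- equivalently, that the $p$-divisible groups and isogenies attached to the ``new'' lattices of $\mathcal L$, those arising only as sums and intersections of lattices in the $\mathcal L_i$, are canonically determined by the data over the $\mathcal L_i$. This is a property of the Rapoport--Zink formalism of polarized multichains and $\mathcal L$-sets \cite[\S3]{Rapoport-Zink}; granting it, everything else is formal, using only that proper monomorphisms are closed immersions, cancellation of closed immersions, and the behaviour of reduction under products over an algebraically closed field. As an alternative to working with that formalism directly, one can instead use the embedding \eqref{eqn:embedding-GL} into the corresponding space for $GL$, where multichains of lattices correspond to points of an affine flag variety and the reconstruction statement is transparent, and then descend along the closed immersion into the $GL$-space.
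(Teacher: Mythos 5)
Your argument is correct and follows essentially the same route as the paper: the map to the product of naive RZ spaces is a monomorphism by the moduli description, properness upgrades it to a closed immersion, and the flat RZ spaces inherit this as closed (formal) subschemes of the naive ones. The only cosmetic differences are that the paper deduces properness from the properness of the irreducible components of the source (\cite[Prop.~2.32]{Rapoport-Zink}) together with the source being locally of finite type, rather than from properness of the individual forgetful morphisms, and that it does not spell out your (correct) local-model/generic-fibre argument for why the projections respect the flat closures.
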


\begin{proof}
    It is clear that for $P\subseteq P'$ there is a projection morphism between the corresponding RZ spaces.
Now, for the ``naive'' versions the definition in terms of chains of $p$-divisible groups shows immediately that the morphism in question is a monomorphism. Since the irreducible components of the source are proper, cf.~\cite[Prop.~2.32]{Rapoport-Zink}, and the source is locally of finite type over $\kk$ (\cite[Thm.~3.25]{Rapoport-Zink}), it follows that the morphism is a closed immersion. The ``flat'' RZ spaces are closed formal subschemes of the naive RZ spaces, so the above property continues to hold.
\end{proof}

\subsection{RZ spaces of Hodge type}

In the work of Kim~\cite{Kim} and Hamacher and Kim~\cite{Hamacher-Kim} where RZ
spaces for data of Hodge type are constructed, the bijection $\mathcal M(\bG, \mu,
b)_{\Pstab}(\kk) \cong X(\mu, b)_\Pstab(\kk)$ on $\kk$-valued points is an
essential feature of the construction, see~\cite[Prop.~4.3.5]{Hamacher-Kim}.
Zhu (\cite[Prop.~3.11]{Zhu}) proved that this set-theoretical equality implies
the above isomorphism of perfect schemes, using results of Gabber and Lau on
Dieudonn\'e theory over perfect rings to handle the case $\bG=GL_n$, and then
embedding the general situation into a suitable $GL_n$-situation. While
in~\cite{Zhu} it was assumed that $K$ is hyperspecial, the only reason for this
assumption is that at the time of writing RZ spaces of Hodge type had been
constructed only in this special situation; the paper~\cite{Hamacher-Kim}
appeared only later.

Note however that the previous paragraph concerns only the situation for a fixed
level $\Pstab$. Because of the way RZ spaces are defined in the Hodge type
situation, it is not clear that these bijections are compatible
with the projection maps attached to a pair $\Pstab\subset \Pstab'$ of parahoric
subgroups. In fact, the definition relies on embedding the situation into an RZ
space of Siegel type (i.e., associated with a group of symplectic similitudes
and hyperspecial level structure). However, it is not evident whether the result
is independent of the choice of embedding, and it does not seem clear whether
such embeddings can be chosen in a compatible way given $\Pstab\subset \Pstab'$.

\subsection{The weak Bruhat-Tits stratification}
\label{subsec:weak-bt-rz}

Next we discuss the question of defining a (weak) Bruhat--Tits stratification on Rapoport--Zink spaces. Hence, we now restrict to the fully Hodge--Newton decomposable case. Then we have the weak Bruhat--Tits stratification on $X(\mu, b)_{\Pcirc}$ (Section~\ref{subsec:bt-stratification}).

Since property ($\diamondsuit$) involves $\Pstab$ instead of $\Pcirc$, we first need to discuss the question of defining a weak BT stratification on $X(\mu, b)_\Pstab$. To this end, we impose the following additional assumption:

\bigskip
($\heartsuit$)\qquad The projection $X_{\Pcirc} \to X_\Pstab$ is surjective.
\bigskip

See Section~\ref{parahoric-vs-stabilizer} for two sufficient criteria for this
assumption. Of course, it is trivially satisfied if $\Pstab=\Pcirc$. It is also
satisfied in those cases that have been studied in detail on the Shimura variety
side (attached to ramified unitary groups). We do not know of an example where
this property fails.

Let us denote by $X_{\Pcirc, c}$ the ``component'' indexed by
$c\in\pi_0(\Gr_{\Pcirc})$, i.e., $X_{\Pcirc, c} = X_{\Pcirc}\cap {\breve
\kappa}^{-1}(c)$. As explained in Section~\ref{parahoric-vs-stabilizer},
$X_{\Pcirc, c} = \emptyset$ unless $c\in \pi_0(\Gr_{\Pcirc})^\sigma$.

For each $c\in \pi_0(\Gr_{\Pcirc})^\sigma$, the projection $X_{\Pcirc}\to
X_{\Pstab}$ restricts to an isomorphism
\[
    \gamma_c\colon X_{\Pcirc, c}  \isoarrow X_{\Pstab, \pi(c)}.
\]

Here we denote by $\pi(c)$ the connected component of $\Gr_{\Pstab}$ which is
the image of $c$, and by $X_{\Pstab, \pi(c)}$ its intersection with
$X_{\Pstab}$. Then $\pi^{-1}(X_{\Pstab, \pi(c)})$ is the union of the
$X_{\Pcirc, c'}$ where $c'$ ranges over the $\Pstab/\Pcirc$-orbit of $c$ in
$\pi_0(\Gr_{\Pcirc})$.

To establish that $\gamma_c$ above is an isomorphism, recall that each connected
component of $\Gr_{\Pcirc}$ maps isomorphically onto a connected component of
$\Gr_{\Pstab}$. Therefore the only question is the surjectivity, which follows
from ($\heartsuit$).

Since all BT strata in $X_{\Pcirc}$ are connected, each stratum lies in one of
the components $X_{\Pcirc, c}$. Using the isomorphisms $\gamma_c$, we obtain
a Bruhat--Tits stratification on $X_{\Pstab}$ which is independent of the choice
of $c$, as the following lemma shows:

\begin{lemma}
    Let $c, c'\in \pi_0(\Gr_{\Pcirc})^\sigma$ such that $c'c^{-1}\in \Pstab/\Pcirc$. Let $j Y(w) \subseteq X_{\Pcirc, c'}$ be a BT stratum. Then
    \[
        \gamma_{c}^{-1}(\gamma_{c'}(jY(w))) \subseteq X_{\Pcirc, c}
    \]
    is a BT stratum.
\end{lemma}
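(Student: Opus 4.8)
The plan is to realize the isomorphism $\gamma_c^{-1}\circ\gamma_{c'}$ concretely as a \emph{right translation} on $\Gr_{\Pcirc}$, to see that this right translation preserves $X_{\Pcirc}$ and permutes its EKOR strata, and then to descend from EKOR strata to Bruhat--Tits strata using that the latter are the connected components of the former.

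First I would note that $c'c^{-1}\in\Pstab/\Pcirc$ means exactly that $c$ and $c'$ have the same image under $\pi_0(\Gr_{\Pcirc})\to\pi_0(\Gr_{\Pstab})$, i.e.\ $\pi(c)=\pi(c')$, so $\phi:=\gamma_c^{-1}\circ\gamma_{c'}\colon X_{\Pcirc,c'}\isoarrow X_{\Pcirc,c}$ is a well-defined isomorphism, and on $\kk$-points it is given by $g\Pcirc\mapsto g\dot p\Pcirc$ for any lift $\dot p\in\Pstab$ of $c(c')^{-1}$ (since then $\gamma_{c'}(g\Pcirc)=g\Pstab=g\dot p\Pstab=\gamma_c(g\dot p\Pcirc)$, and $g\dot p\Pcirc$ lies in the component $c$). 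Since $\brI$ and $\dot p^{-1}\brI\dot p$ are both Iwahori subgroups contained in $\Pcirc=\brK$ (the latter because $\dot p$ normalizes $\brK$), they are $\brK$-conjugate; replacing $\dot p$ by $\dot p k$ for a suitable $k\in\brK$ — which changes neither $\phi$ nor the class of $\dot p$ in $\Pstab/\Pcirc$ — I may assume $\dot p=\dot\omega$ normalizes $\brI$ as well. Its image $\omega\in\Omega$ then fixes the base alcove and, normalizing $W_K$, stabilizes the subset $K\subset\tS$; moreover $\omega\in\Omega^\s$ (because $c(c')^{-1}$ is $\s$-fixed), whence $\dot\omega^{-1}\s(\dot\omega)\in\brI$ (it normalizes $\brI$ and is trivial in $\Omega$). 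Thus $\phi$ is the restriction of right translation $R_{\dot\omega}$, an automorphism of the perfect scheme $\Gr_{\Pcirc}$ that commutes with the left $\bJ(F)$-action.

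Next I would record the combinatorics. As $\omega$ has length $0$, the map $\Ad(\dot\omega)$ is a length-preserving automorphism of $\tW$ that preserves the Bruhat order, normalizes $W_K$, commutes with $\s$, and sends $t^{x\underline\mu}$ to $t^{\bar\omega(x)\underline\mu}$; hence it permutes $\Adm(\mu)$, preserves ${}^K\tW$, and (being compatible with $\Ad(\t)\circ\s$ and a bijection of $\tS$) preserves $\supp_\s$ up to a diagram automorphism, so that it permutes $\KAdm(\mu)_0$. Since $\dot\omega$ normalizes both $\brI$ and $\brK$, $R_{\dot\omega}$ is compatible with the projection $\brG/\brI\to\brG/\brK$, and a short manipulation of the defining equation (using $\dot\omega^{-1}\s(\dot\omega)\in\brI$) shows $R_{\dot\omega}(X_w(b))=X_{w'}(b)$ with $w':=\Ad(\dot\omega^{-1})(w)\in\KAdm(\mu)_0$. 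Projecting to level $\brK$, $R_{\dot\omega}$ carries the EKOR stratum $X_{K,w}(b)$ isomorphically onto $X_{K,w'}(b)$; as $w\mapsto w'$ permutes $\KAdm(\mu)_0$, this already gives $R_{\dot\omega}(X_{\Pcirc})=X_{\Pcirc}$ and that $R_{\dot\omega}$ permutes the EKOR strata.

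Finally I would descend to the finer strata: inside $X_{K,w}(b)=\bigsqcup_j jY(w)$ the pieces $jY(w)$ are open, closed, and connected, hence are exactly its connected components, and likewise for $w'$; so the isomorphism $R_{\dot\omega}\colon X_{K,w}(b)\isoarrow X_{K,w'}(b)$ maps connected components to connected components, giving $R_{\dot\omega}(Y(w))=j_1Y(w')$ for some $j_1\in\bJ(F)$. Combined with the $\bJ(F)$-equivariance of $R_{\dot\omega}$ this yields $\gamma_c^{-1}(\gamma_{c'}(jY(w)))=R_{\dot\omega}(jY(w))=(jj_1)Y(w')$, a Bruhat--Tits stratum, lying in $X_{\Pcirc,c}$ since $\phi$ maps $X_{\Pcirc,c'}$ there. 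The step I expect to take the most care is the middle one: checking that a lift of $c(c')^{-1}$ can be chosen to normalize the Iwahori (equivalently, that $\Pstab/\Pcirc$ is realized by length-$0$ elements normalizing $W_K$), and establishing the transformation rule $R_{\dot\omega}(X_w(b))=X_{\Ad(\dot\omega^{-1})(w)}(b)$ together with the stability of $\KAdm(\mu)_0$ — and, if one prefers to argue with $Y(w)$ directly rather than via connected components, also of the parahoric data $\mathcal P^\flat_w,\mathcal P_w$ — under $\Ad(\dot\omega)$; all of this is routine but tied to the combinatorics of the Iwahori--Weyl group.
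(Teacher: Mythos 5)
Your proof is correct and follows essentially the same route as the paper: both identify $\gamma_c^{-1}\circ\gamma_{c'}$ as right translation by a $\sigma$-fixed length-zero representative of $c(c')^{-1}$ coming from $\Pstab$, and observe that this conjugates the EKOR datum $w$ into another element of $\KAdm(\mu)_0$. You fill in two points the paper leaves implicit — that the representative can be adjusted within $\brK$ to normalize $\brI$, and the final descent from EKOR strata to BT strata via connected components — but the underlying argument is the same.
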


\begin{proof}
    The map $\gamma_{c}^{-1}\circ \gamma_ {c'}$ is given by $g\Pcirc\mapsto g(c')^{-1}c\Pcirc$.
    We may represent $(c')^{-1}c$ by (a representative of) a length $0$ element in $\tW$ which comes from $\Pstab$. To simplify the notation, we change notation and denote this element by $c$. Then we have $\s(c) = c$ inside $\tW$ (passing to a different representative, if necessary we could achieve the same property on the level of elements of $\breve G$).

    If $g\in jY(w)$, then $g^{-1}\t\s(g)\in \Pcirc \cdot_{\sigma} (\brI w\t \brI)$. But then
    \[
        c^{-1} g^{-1} \t\s(g)\s(c) \in c^{-1}\Pcirc \cdot_\s(\brI w\t \brI) \s(c)
        = \Pcirc \cdot_\s (\brI c^{-1}w\t \s(c) \brI).
    \]
    Now $c^{-1}w\t \s(c) = c^{-1} w\t c$ is again an EKOR element for $\Pcirc$ (i.e., it lies in ${}^K\tW\cap \Adm(\mu)$, where $K\subset\tS$ denotes the set of simple affine reflections generating $\Pcirc$).

    Since the element $c^{-1} w\t c$ is independent of $g$, the lemma follows.
\end{proof}

We hence get a well-defined ``Bruhat--Tits stratification'' on $X_{\Pstab}$ and the projection $X_{\Pcirc}\to X_{\Pstab}$ is compatible with the stratifications:

\begin{corollary}
    In the fully Hodge--Newton decomposable case, the isomorphisms $X_c \to \pi(X_c)$, for a connected component $c$ of $\Gr_{\Pcirc}$, define a weak Bruhat--Tits stratification on $X_P$ (i.e., the stratification is independent of the choices of $c$).
\end{corollary}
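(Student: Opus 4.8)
The plan is to \emph{define} the strata of $X_\Pstab$ to be the images of the strata of $X_\Pcirc$ under the component-wise isomorphisms $\gamma_c$, and then to verify that this is well-defined, i.e.\ independent of the auxiliary component $c$. Concretely, recall from Section~\ref{subsec:bt-stratification} that each BT stratum $jY(w)$ of $X_\Pcirc$ is connected, hence is contained in a unique component $X_{\Pcirc, c}$ for some $c\in\pi_0(\Gr_\Pcirc)^\sigma$; one then declares $\gamma_c(jY(w))\subseteq X_{\Pstab, \pi(c)}$ to be a stratum of $X_\Pstab$. The first thing to check is that this collection really is a decomposition of $X_\Pstab$. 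Surjectivity of the projection $X_\Pcirc\to X_\Pstab$ (assumption~($\heartsuit$)), together with the fact that the $X_{\Pcirc, c}$ with $c\in\pi_0(\Gr_\Pcirc)^\sigma$ cover $X_\Pcirc$, shows that the proposed strata cover $X_\Pstab$; here one uses that $\pi_0(\Gr_\Pstab)$ is the quotient of $\pi_0(\Gr_\Pcirc)$ by $\Pstab/\Pcirc$, so every component of $\Gr_\Pstab$ meeting $X_\Pstab$ is the image of a $\sigma$-fixed component of $\Gr_\Pcirc$. For disjointness, fix a component $\pi(c)$ of $\Gr_\Pstab$ and one preimage $c$: since $\gamma_c$ is an isomorphism it carries the pairwise disjoint BT strata of $X_{\Pcirc, c}$ to pairwise disjoint subsets of $X_{\Pstab, \pi(c)}$, while strata lying in distinct components of $\Gr_\Pstab$ are trivially disjoint.

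The key point, independence of the choice of the preimage $c$, is exactly what the lemma preceding this corollary provides: if $c, c'\in\pi_0(\Gr_\Pcirc)^\sigma$ have the same image in $\pi_0(\Gr_\Pstab)$, i.e.\ $c'c^{-1}\in\Pstab/\Pcirc$, then $\gamma_c^{-1}\circ\gamma_{c'}$ is an automorphism of $X_{\Pcirc, c}$ carrying BT strata bijectively to BT strata, so the partition of $X_{\Pstab, \pi(c)}$ produced via $\gamma_c$ coincides with the one produced via $\gamma_{c'}$. It then remains to observe that the resulting decomposition of $X_\Pstab$ deserves the name ``weak Bruhat--Tits stratification'': since each $\gamma_c$ is an isomorphism of perfect schemes, it transports the description of each stratum of $X_\Pcirc$ as (the perfection of) a classical Deligne--Lusztig variety, together with the closure relations among strata recalled in Section~\ref{subsec:bt-stratification}, to the corresponding strata of $X_\Pstab$; and compatibility of the stratification with the projection $X_\Pcirc\to X_\Pstab$ holds by construction.

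I do not expect a genuine obstacle here: the substantive content has already been isolated in the lemma above and in assumption~($\heartsuit$), and the remaining work is the purely formal bookkeeping of the $\Pstab/\Pcirc$-action on $\pi_0(\Gr_\Pcirc)$ and of how connected components of the two affine flag varieties match up under $\pi$. If anything is delicate, it is simply making sure that ``stratum'' is being used consistently on both sides (as a locally closed subset together with its identification with a fine Deligne--Lusztig variety), so that the transport of structure along $\gamma_c$ is unambiguous.
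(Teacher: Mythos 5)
Your proof is correct and follows essentially the same route as the paper, which states the corollary as an immediate consequence of the preceding lemma (independence of the choice of $c$) together with the surjectivity supplied by ($\heartsuit$) and the fact that each connected BT stratum lies in a single component $X_{\Pcirc,c}$ with $c\in\pi_0(\Gr_{\Pcirc})^\sigma$. The additional bookkeeping you spell out (covering, disjointness, transport of the Deligne--Lusztig description and closure relations along the isomorphisms $\gamma_c$) is exactly what the paper leaves implicit.
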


Consider an inclusion $\Pstab\subset \Pprimestab$ of facet stabilizers and the corresponding inclusion $\Pcirc \subset \Pprimecirc$. We obtain a commutative diagram
\[
    \xymatrix{
        X_{\Pcirc} \ar[r]\ar[d] & X_{\Pprimecirc}\ar[d]\\
        X_{\Pstab} \ar[r] & X_{\Pprimestab}.
    }
\]
The sets of connected components of $\Gr_{\Pcirc}$ and $\Gr_{\Pprimecirc}$ coincide. Fixing a connected component $c$ of $\Gr_{\Pcirc}$ and denoting by $c'$ its image in $\Gr_{\Pprimecirc}$, we can restrict the above diagram to $c$ and $c'$, and obtain a diagram
\[
    \xymatrix{
        X_{\Pcirc, c} \ar[r]\ar[d] & X_{\Pprimecirc, c'}\ar[d]\\
        X_{\Pstab, \pi(c)} \ar[r] & X_{\Pprimestab, \pi'(c')}
    }
\]
(with $\pi'\colon \Gr_{\Pprimecirc}\to \Gr_{\Pprimestab}$ the projection) where the vertical morphisms are isomorphisms. Since the upper horizontal map maps each BT stratum in $X_c$ isomorphically onto a BT stratum in $X_{c'}$, the same holds true for the lower horizontal map.

We also record the following easy lemma:

\begin{lemma}
    Let $\mathbf f$, $\mathbf f_i$, $i\in I$, be facets of the base alcove (viewed as simplices, i.e., as subsets of the set of vertices of the base alcove). Denote by $\Pstab$, $\Pstab_i$ the facet stabilizers, and by $\Pcirc$, $\Pcirc_i$ the corresponding parahoric subgroups. The following are equivalent:
    \begin{enumerate}
        \item $\mathbf f = \bigcup_i \mathbf f_i$,
        \item $\Pstab = \bigcap_i \Pstab_i$,
        \item $\Pcirc = \bigcap_i \Pcirc_i$.
    \end{enumerate}
\end{lemma}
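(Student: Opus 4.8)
The plan is to translate all three conditions into statements about subsets of $\tS$, using the standard dictionary between facets of the base alcove, such subsets, and standard parahoric (resp.\ facet-stabilizer) subgroups. First I would fix the dictionary: writing a facet $\mathbf f$ of the base alcove as its set of vertices, it corresponds to $K_{\mathbf f}:=\{s\in\tS:\ v_s\notin\mathbf f\}$, where $v_s$ denotes the vertex of the base alcove not lying on the wall fixed by $s$. Under this correspondence $\Pcirc_{\mathbf f}$ is the standard parahoric $\Pcirc_{K_{\mathbf f}}=\coprod_{w\in W_{K_{\mathbf f}}}\brI w\brI$, while $\Pstab_{\mathbf f}$, the stabilizer of $\mathbf f$, equals the intersection $\bigcap_{v\in\mathbf f}\Pstab_{\{v\}}$ of the stabilizers of its vertices. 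The assignment $\mathbf f\mapsto K_{\mathbf f}$ is an inclusion-reversing bijection from facets of the base alcove onto subsets $K\subsetneq\tS$ with $W_K$ finite, and by De Morgan it carries $\bigcup_i\mathbf f_i$ to $\bigcap_i K_{\mathbf f_i}$; hence $(1)$ is equivalent to $K_{\mathbf f}=\bigcap_i K_{\mathbf f_i}$.

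The group-theoretic content is then the Coxeter-theoretic identity $\bigcap_i W_{K_i}=W_{\bigcap_i K_i}$, valid for any family of subsets $K_i\subseteq\tS$ (cf.~\cite{Bour}). Combined with the disjointness in the Bruhat decomposition $\brG=\coprod_{w\in\tW}\brI w\brI$ it gives, purely set-theoretically, $\bigcap_i\Pcirc_{K_i}=\coprod_{w\in\bigcap_i W_{K_i}}\brI w\brI=\Pcirc_{\bigcap_i K_i}$. Since $K\mapsto\Pcirc_K$ is injective on subsets with $W_K$ finite, condition $(3)$ holds exactly when $K_{\mathbf f}=\bigcap_i K_{\mathbf f_i}$, i.e.\ exactly when $(1)$ holds. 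For $(2)$ one rewrites $\Pstab_{\mathbf f}=\bigcap_{v\in\mathbf f}\Pstab_{\{v\}}=\bigcap_{s\notin K_{\mathbf f}}\Pstab_{\tS\setminus\{s\}}$, so that $\bigcap_i\Pstab_{\mathbf f_i}=\bigcap_{s\notin\bigcap_i K_{\mathbf f_i}}\Pstab_{\tS\setminus\{s\}}=\Pstab_{\bigcup_i\mathbf f_i}$; thus $(2)$ amounts to $\Pstab_{\mathbf f}=\Pstab_{\bigcup_i\mathbf f_i}$, which by injectivity of $\mathbf f\mapsto\Pstab_{\mathbf f}$ is again $(1)$. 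Chaining the equivalences gives $(1)\Leftrightarrow(2)\Leftrightarrow(3)$.

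The step that needs the most care is the assertion that $\Pstab$ (and not only $\Pcirc$) remembers the facet — i.e.\ injectivity of $\mathbf f\mapsto\Pstab_{\mathbf f}$ on faces of the base alcove — together with its compatibility with passing to subfacets; this is where one must keep track of the precise relation between $\Pstab$ and $\Pcirc$ recalled in Section~\ref{parahoric-vs-stabilizer}. Injectivity is easy once set up correctly: if $\mathbf g\subsetneq\mathbf f$ then there is $s\in K_{\mathbf g}\setminus K_{\mathbf f}$, and any representative of $s$ lies in $\Pcirc_{\mathbf g}\subseteq\Pstab_{\mathbf g}$ but moves the vertex $v_s\in\mathbf f$, hence does not lie in $\Pstab_{\{v_s\}}\supseteq\Pstab_{\mathbf f}$; so $\Pstab_{\mathbf g}\supsetneq\Pstab_{\mathbf f}$ and $\mathbf f\mapsto\Pstab_{\mathbf f}$ is strictly monotone along the face poset. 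Everything else is routine bookkeeping with the Bruhat decomposition.
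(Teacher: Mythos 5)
Your proof is correct and takes essentially the same route as the paper, which disposes of the lemma in two lines: $(1)\Leftrightarrow(2)$ is declared clear, and $(1)\Leftrightarrow(3)$ holds ``since a parahoric subgroup is determined by the set of affine simple reflections which it contains'' --- exactly the dictionary $\mathbf f\leftrightarrow K_{\mathbf f}$ and the identity $\bigcap_i W_{K_i}=W_{\bigcap_i K_i}$ that you spell out. Your extra care with $(2)$ --- in particular reading $\Pstab_{\mathbf f}$ as the simultaneous stabilizer of all vertices of $\mathbf f$ (the pointwise fixer, as in Haines--Rapoport) rather than the setwise stabilizer, which is the only reading under which $(1)\Rightarrow(2)$ actually holds --- is a worthwhile clarification of a point the paper passes over.
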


\begin{proof}
    Clearly, (1) and (2) are equivalent. Furthermore, (3) and (1) are equivalent since a parahoric subgroup is determined by the set of affine simple reflections which it contains.
\end{proof}

Now we can invoke property ($\diamondsuit$) and obtain that in the fully Hodge--Newton decomposable case and under the above assumptions, the spaces $\mathcal M(\bG, \mu, b)_{\Pstab, \kk}^{p^{-\infty}}$ carry a weak BT stratification which is compatible with the weak BT stratification on $X_P$ via ($\diamondsuit$). For an inclusion $\Pstab\subset \Pprimestab$, the corresponding map of RZ spaces maps each BT stratum in the source isomorphically (in the sense of perfect schemes) onto a BT stratum in the target.

Since the morphism $\mathcal M(\bG, \mu, b)_{K, \kk}^{p^{-\infty}} \to \mathcal
M(\bG, \mu, b)_{K, \kk}^{\rm red}$ is a homeomorphism, we likewise get

\begin{corollary}
    In the fully Hodge--Newton decomposable case and under the above assumptions, the spaces $\mathcal M(\bG, \mu, b)_{\Pstab, \kk}$ carry a \emph{weak Bruhat--Tits stratification} into locally closed reduced subschemes which is compatible with the weak BT stratification on $X_P$ via passing to perfections and the isomorphism {\rm ($\diamondsuit$)}. For an inclusion $\Pstab\subset \Pprimestab$, the corresponding map of RZ spaces maps each BT stratum in the source homeomorphically onto a BT stratum in the target.

The perfection of each stratum is isomorphic to the perfection of a classical Deligne--Lusztig variety.
\end{corollary}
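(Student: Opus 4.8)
The plan is to derive this corollary from the immediately preceding one by transporting everything along the homeomorphism $\mathcal M(\bG, \mu, b)_{\Pstab, \kk}^{p^{-\infty}}\to \mathcal M(\bG, \mu, b)_{\Pstab, \kk}^{\rm red}$ recorded just above. The preceding corollary already supplies the perfect scheme $\mathcal M(\bG, \mu, b)_{\Pstab, \kk}^{p^{-\infty}}$ with a weak Bruhat--Tits stratification, compatible with the one on $X_{\Pstab}$ via ($\diamondsuit$), all of whose strata are perfections of classical Deligne--Lusztig varieties. First I would observe that, since the morphism in question is a homeomorphism of underlying spaces and the BT strata on the $p^{-\infty}$-side form a partition into locally closed subsets, their images form a partition of $\mathcal M(\bG, \mu, b)_{\Pstab, \kk}^{\rm red}$ into locally closed subsets; I then \emph{define} the BT strata of $\mathcal M(\bG, \mu, b)_{\Pstab, \kk}$ to be these subsets endowed with their reduced subscheme structure, which is manifestly a partition into locally closed reduced subschemes.

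Next I would prove the statement about perfections of strata. The key fact is that the perfection of a $\kk$-scheme depends only on its reduction, so $\mathcal M(\bG, \mu, b)_{\Pstab, \kk}^{p^{-\infty}} = \bigl(\mathcal M(\bG, \mu, b)_{\Pstab, \kk}^{\rm red}\bigr)^{p^{-\infty}}$, and the preimage of a BT stratum $Z$ (a locally closed reduced subscheme of $\mathcal M(\bG, \mu, b)_{\Pstab, \kk}^{\rm red}$) under this morphism is a perfect scheme mapping to $Z$ by a universal homeomorphism, hence is canonically the perfection $Z^{p^{-\infty}}$. By construction this preimage is the corresponding BT stratum on the $p^{-\infty}$-side, which by the preceding corollary is isomorphic to the perfection of a classical Deligne--Lusztig variety; thus $Z^{p^{-\infty}}$ is isomorphic to the perfection of a classical Deligne--Lusztig variety, as claimed.

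For the compatibility with an inclusion $\Pstab\subset\Pprimestab$, I would use functoriality of perfection and of reduction: the square with top row $\mathcal M(\bG, \mu, b)_{\Pstab, \kk}^{p^{-\infty}}\to \mathcal M(\bG, \mu, b)_{\Pprimestab, \kk}^{p^{-\infty}}$, bottom row the corresponding map of reduced special fibres, and vertical arrows the perfection homeomorphisms, commutes; the top arrow carries each BT stratum isomorphically (in the sense of perfect schemes) onto a BT stratum by the preceding corollary; hence the bottom arrow carries each BT stratum homeomorphically onto a BT stratum. Compatibility with the weak BT stratification on $X_{\Pstab}$ via ($\diamondsuit$) and passing to perfections is then immediate from the way the strata on the reduced side were defined.

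The only genuinely delicate point — and the one I would treat carefully — is the interchange of ``take the reduced locally closed subscheme'' with ``pass to the perfection'': one needs that the perfection of the reduced stratum on the special fibre recovers, up to canonical isomorphism, the given stratum on the $p^{-\infty}$-side, rather than merely mapping homeomorphically to its image. This follows from the insensitivity of perfection to nilpotents together with its compatibility with locally closed immersions (perfection of a locally closed subscheme is the corresponding locally closed subscheme of the perfection), so no serious obstacle arises; the corollary is essentially a formal consequence of its predecessor.
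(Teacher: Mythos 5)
Your proposal is correct and follows essentially the same route as the paper, which simply observes that the perfection morphism $\mathcal M(\bG, \mu, b)_{\Pstab, \kk}^{p^{-\infty}} \to \mathcal M(\bG, \mu, b)_{\Pstab, \kk}^{\rm red}$ is a homeomorphism and transports the stratification of the preceding corollary along it. Your extra care about perfection being insensitive to nilpotents and compatible with locally closed immersions just makes explicit what the paper leaves implicit.
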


The goal of this section is to investigate the following property (under the assumption of full Hodge--Newton decomposability):

\medskip
\textbf{Property} $BT_\Pstab$: In the (weak) Bruhat--Tits stratification of
$\mathcal M(\bG, \mu, b)_{\Pstab, \kk}^{\rm red}$, each stratum is isomorphic to
the corresponding classical Deligne--Lusztig variety, without passing to the perfection. The closure of each stratum is isomorphic to the closure of this classical Deligne--Lusztig variety in the corresponding finite-dimensional partial flag variety.

\medskip\noindent
Our result will be that if ($BT_{\Pstab_i}$) holds for all $i$, then ($BT_\Pstab$) also holds for $\Pstab=\cap_i \Pstab_i$. Note that we need to use in the proof that we already know the result after perfection for level $\Pstab$.

For most maximal parahoric level structures $\Pstab$, the property ($BT_P$) has been established by now, and this result will allow us to deduce the same property for many non-maximal level structures, and in particular for most level structures of Coxeter type. See Section~\ref{sec:individual-cases} for a discussion of the individual cases.

\begin{proposition}\label{prop:bt-for-intersection}
    Let $(\bG, \mu)$ be fully Hodge--Newton decomposable. Assume that properties
    {\rm ($\diamondsuit$)}, {\rm ($\clubsuit$)} and {\rm ($\heartsuit$)} hold.
    As above, let $\Pstab_i$ be facet stabilizers (of facets of the base alcove which are fixed by $\s$) such that
    $BT_{\Pstab_i}$ holds for all $i$. Then $BT_\Pstab$ holds for $\Pstab := \bigcap_i \Pstab_i$.
\end{proposition}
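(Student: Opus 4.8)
The plan is to fix a single weak Bruhat--Tits stratum $S$ of $\mathcal M(\bG, \mu, b)_{\Pstab, \kk}^{\rm red}$, write $w$ for the element of the Iwahori--Weyl group indexing it and $\mathcal P_w^\flat$ for the parahoric it generates (this is intrinsic to $w$ and does not change with the level), and show that the scheme-theoretic closure $\overline S$ is isomorphic, as a scheme, to the closure $\overline{Y_S}$ of the corresponding classical Deligne--Lusztig variety $Y_S$ inside its ambient finite-dimensional partial flag variety $\mathcal F_S$ for the maximal reductive quotient $\overline{\mathcal P_w^\flat}$ of the special fibre of the parahoric group scheme of $\mathcal P_w^\flat$. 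Since $S$ is open in $\overline S$ and $Y_S$ is open in $\overline{Y_S}$, intersecting the isomorphism $\overline S\cong\overline{Y_S}$ with the appropriate open subscheme will then give $S\cong Y_S$, and running this over all strata yields $BT_\Pstab$. Note first that, using $(\diamondsuit)$ and $(\heartsuit)$ for the level $\Pstab$, the space $\mathcal M(\bG, \mu, b)_{\Pstab, \kk}^{\rm red}$ does carry a weak Bruhat--Tits stratification (Section~\ref{subsec:weak-bt-rz}), so the statement makes sense.

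Next I would record what the hypotheses give for this fixed $S$. By the compatibility of the weak Bruhat--Tits stratifications with the projection morphisms (Section~\ref{subsec:weak-bt-rz}), for each $i$ the projection $p_i\colon \mathcal M(\bG, \mu, b)_{\Pstab, \kk}^{\rm red}\to \mathcal M(\bG, \mu, b)_{\Pstab_i, \kk}^{\rm red}$ carries $S$ homeomorphically onto a weak Bruhat--Tits stratum $S_i$; via $(\diamondsuit)$ for the levels $\Pstab$ and $\Pstab_i$ together with the description in Section~\ref{subsec:bt-stratification}, the perfection $(\overline S)^{p^{-\infty}}$ is canonically identified with that of $\overline{Y_S}$ and $(\overline{S_i})^{p^{-\infty}}$ with that of the closure $\overline{Y_{S_i}}$ of the classical Deligne--Lusztig variety at level $\Pstab_i$, and these identifications intertwine the $p_i$ with the morphisms $q_i\colon\mathcal F_S\to\mathcal F_{S_i}$ of partial flag varieties induced by the partial affine flag variety projections. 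Hypothesis $BT_{\Pstab_i}$ provides an isomorphism of schemes $\overline{S_i}\cong\overline{Y_{S_i}}$ (whose perfection is the identification just described). Finally, $(\clubsuit)$ gives a closed immersion $\mathcal M(\bG, \mu, b)_{\Pstab, \kk}^{\rm red}\hookrightarrow\prod_i\mathcal M(\bG, \mu, b)_{\Pstab_i, \kk}^{\rm red}$; restricting it to the closed subscheme $\overline S$ (which lands in $\prod_i\overline{S_i}$ by continuity) and composing with the isomorphisms from $BT_{\Pstab_i}$ produces a locally closed immersion $\overline S\hookrightarrow\prod_i\overline{Y_{S_i}}$.

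Now I would produce a second, ``classical'' embedding of $\overline{Y_S}$ into the same target. From the lemma of Section~\ref{subsec:weak-bt-rz} the equality $\Pstab=\bigcap_i\Pstab_i$ is equivalent to $\Pcirc=\bigcap_i\Pcirc_i$ for the corresponding parahorics; intersecting with $\mathcal P_w^\flat$ and passing to the reductive quotient $\overline{\mathcal P_w^\flat}$ shows that the parabolic subgroups defining the $\mathcal F_{S_i}$ intersect in the one defining $\mathcal F_S$, so that $q:=\prod_i q_i\colon\mathcal F_S\to\prod_i\mathcal F_{S_i}$ is a locally closed immersion carrying $\overline{Y_S}$ into the closed subscheme $\prod_i\overline{Y_{S_i}}$. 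Thus $\overline S$ and $\overline{Y_S}$ both sit inside $\prod_i\overline{Y_{S_i}}$ as locally closed subschemes, and both are reduced (isomorphic images of reduced schemes under immersions). I would then argue that they have the same underlying topological space: passage to perfections is a homeomorphism on underlying spaces, and by the compatibilities collected in the previous paragraph the two immersions become, after perfection, one and the same map, namely the one induced on $(\overline S)^{p^{-\infty}}$ (the closure of a stratum in $X(\mu, b)_\Pstab$) by the product of the projections $X(\mu, b)_\Pstab\to X(\mu, b)_{\Pstab_i}$. Since a reduced locally closed subscheme of a scheme is determined by its underlying subset, this forces $\overline S=\overline{Y_S}$ as subschemes of $\prod_i\overline{Y_{S_i}}$, hence $\overline S\cong\overline{Y_S}$. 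Intersecting with the open subscheme $\prod_i Y_{S_i}$, whose preimage in $\overline S$ is exactly $S$ (any other stratum of $\overline S$ is mapped by some $p_i$ into $\overline{S_i}\smallsetminus S_i$, by the injectivity in $(\clubsuit)$), gives $S\cong Y_S$.

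The step I expect to be the main obstacle is the bookkeeping behind the ``classical'' side: one must verify that enlarging the parahoric level genuinely does not change the big parahoric $\mathcal P_w^\flat$, that $\Pstab=\bigcap_i\Pstab_i$ really descends to the required intersection statement for the parabolic subgroups of $\overline{\mathcal P_w^\flat}$ (so $q$ is a locally closed immersion), and, most delicately, that the morphisms $q_i$ match the $(\clubsuit)$-embedding after perfection --- that is, that the isomorphisms $(\diamondsuit)$ at the various levels intertwine the Rapoport--Zink projections with the partial affine flag variety projections in exactly the way used above, and that the isomorphisms furnished by $BT_{\Pstab_i}$ are compatible with these identifications on perfections. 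Once this is settled, the concluding comparison of two reduced locally closed subschemes with the same support is routine, and so is the descent of $BT_\Pstab$ from closures of strata to the strata themselves.
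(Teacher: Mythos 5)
Your argument is correct and follows essentially the same route as the paper's proof: embed the stratum (or its closure) via ($\clubsuit$) into the product over the levels $\Pstab_i$, use $BT_{\Pstab_i}$ to identify the factors with classical Deligne--Lusztig varieties, exhibit the classical object inside the same product, and conclude by comparing two reduced locally closed subschemes whose underlying topological spaces agree after perfection, where the group-theoretic identifications are available. The only (cosmetic) differences are that you treat closures first and deduce the open strata by restricting to an open, and that you are more explicit about the classical embedding $\prod_i q_i$ and the intersection statement for the parabolics, whereas the paper argues for the open strata (where the classical embedding is just a diagonal) and disposes of the closures with a one-line remark.
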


\begin{proof}
    To shorten the notation, we write $\mathcal M_\Pstab$ for $\mathcal M(\bG, \mu,
    b)_{\Pstab, \kk}^{\rm red}$.
    The assumption implies that $\mathcal M_\Pstab$ has a Bruhat--Tits stratification as explained above.
    Consider a BT stratum $S\subset \mathcal M_\Pstab^{p^{-\infty}}$ (i.e., $S$ is
    isomorphic to the perfection of a classical Deligne--Lusztig variety $X$). We denote by $Y$ the reduced locally closed subscheme inside $\mathcal M_\Pstab$ corresponding to $S$.

    For each $i$, $S$ maps isomorphically onto a BT stratum $S_i \subset \mathcal M_{\Pstab}^{p^{-\infty}}$. So $S_i$ is the perfection of the same DL variety $X$. By the assumption $BT_{\Pstab_i}$, the stratum $Y_i \subset \mathcal M_{\Pstab_i}$ corresponding to $S_i$ is isomorphic to $X$.

Now consider the closed embedding $\mathcal M_\Pstab \to \prod_i \mathcal
M_{\Pstab_i}$ given by Property~($\clubsuit$). Restricting to $Y$, we obtain a closed embedding $Y\to \prod_i Y_i$. Passing to perfections we obtain an embedding $S\to \prod_i S_i$.

    Then the reduced closed subscheme of $\prod_i Y_i$ with underlying space $S$
    is isomorphic to the Deligne--Lusztig variety $X$: Both are reduced locally
    closed subschemes, so it is enough to check that they have the same
    underlying topological space. But this can be checked on the perfections,
    where we know the result from the group-theoretic situation.

    This implies that the reduced subscheme $Y$ of $\mathcal M_{\Pstab}$ with same
    topological space as $S$ is isomorphic to $X$, as desired. Exactly the same reasoning works when we replace each of $S$, $Y$, $S_i$, $Y_i$ with its closure.
\end{proof}

\subsection{Stabilizers versus parahoric subgroups}\label{parahoric-vs-stabilizer}

It remains to discuss in more detail in which cases the assumption ($\heartsuit$) is satisfied, i.e., when the map $X_{\Pcirc}\to X_{\Pstab}$ is surjective. Let us briefly recall the setup.

Let $\Pstab\subset \breve G$ be the stabilizer of a (poly-)simplex in the Bruhat--Tits building. The intersection $\Pcirc$ of $\Pstab$ with the kernel of the Kottwitz homomorphism $\breve{\kappa}\colon \breve G\to \pi_0(L\breve{G}) = \pi_1(\bG)_{\Gamma_0}$ is a parahoric subgroup (and all parahoric subgroups arise in this way), but in general, $\Pcirc \subset \Pstab$ is a proper normal subgroup.

\begin{remark}
This phenomenon occurs in several cases:
\begin{enumerate}
    \item[(i)]
        The case of ramified unitary groups has been discussed in detail by Pappas and Rapoport in~\cite{Pappas-Rapoport:Twisted} Section~4 and \cite[\S\S1.2, 1.3]{Pappas-Rapoport:LM3}.
    \item[(ii)]
        For even special orthogonal groups (with hyperspecial level structure), see Yu's notes~\cite[Example~2.2.3.1]{Yu:Lectures-buildings}.
    \item[(iii)]
        Applying restriction of scalars along an unramified extension to the above examples, one can construct further examples, and in particular obtains examples where the action of $\s$ on $\Pstab/\Pcirc$ is not trivial.
\end{enumerate}
If $\bG$ is semi-simple and simply connected, or more generally if $\pi_0(L\breve{G})$ is torsion-free, then the finite subgroup $\Pstab/\Pcirc \subseteq \pi_0(L\breve{G})$ must be trivial, so that we have $\Pcirc = \Pstab$.
\end{remark}

In the sequel, we assume that $\Pcirc$ is a standard parahoric subgroup, or in other words, that $\Pstab$ is the stabilizer of a face of the base alcove.
As mentioned above, the group-theoretic results obtained in this paper concern a priori the ``parahoric setting'', but RZ spaces have been defined, so far, in the ``stabilizer setting''. See also Remark~5.4 in the paper~\cite{Hamacher-Kim} by Hamacher and Kim.

Throughout the following discussion, we fix $\mu$ and $b\in B(\bG, \mu)$, and we write $X_{\Pcirc} = X(\mu, b)_{\Pcirc}$, and similarly write
\[
    X_{\Pstab} := X(\mu, b)_{\Pstab} = \{ g\in \Gr_{\Pstab};\ g^{-1}b\sigma(g) \in P \Adm(\mu) P\}.
\]
We have
\[
    X_{\Pcirc}\subseteq \pi^{-1}(X_{\Pstab}).
\]

\begin{lemma}[{cf.~\cite[Rmk.~5.4]{Hamacher-Kim}}]
    \begin{enumerate}
        \item[(i)]
            If $v \in W_a$, $\omega \in \Omega$ with $v\omega \in P$, then $\omega\in P$. (Here we do not distinguish between elements of $\tW$ and representatives of these elements in $\breve G$.)
        \item[(ii)]
            For $\omega \in \Omega$, we have $\omega \Adm(\mu) \omega^{-1} = \Adm(\mu)$.
        \item[(iii)]
            We have $\Pstab \Adm(\mu) \Pstab = \Pcirc \Adm(\mu) \Pstab$.
    \end{enumerate}
\end{lemma}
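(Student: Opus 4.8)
The plan is to reduce all three statements to the Iwahori--Bruhat decomposition of the facet stabilizer $\Pstab$. Let $\mathbf f$ be the facet of the base alcove whose stabilizer in $\brG$ is $\Pstab$, let $K\subseteq\tS$ be the set of simple affine reflections fixing $\mathbf f$, so that $\Pcirc=\brI W_K\brI$ is the standard parahoric of type $K$, and set $\Omega_{\mathbf f}=\{\omega\in\Omega;\ \omega\mathbf f=\mathbf f\}$. First I would recall the following standard facts from Bruhat--Tits theory (see~\cite{Tits:Corvallis}, \cite[\S2]{GHR}): the stabilizer of $\mathbf f$ inside $\tW$ equals $W_{\mathbf f}:=W_K\rtimes\Omega_{\mathbf f}$; since every $\omega\in\Omega$ normalizes $\brI$, one has $\Pstab=\brI W_{\mathbf f}\brI=\bigsqcup_{\omega\in\Omega_{\mathbf f}}\omega\Pcirc=\bigsqcup_{\omega\in\Omega_{\mathbf f}}\Pcirc\omega$; and consequently $\Omega\cap\Pstab=\Omega_{\mathbf f}$.

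For~(i), I would argue as follows. If $v\omega\in\Pstab$ with $v\in W_a$ and $\omega\in\Omega$, then the $\brI$-double coset containing $v\omega$ is contained in $\Pstab$, so by the decomposition above $v\omega\in W_{\mathbf f}$ as an element of $\tW$. Applying the projection $\tW\twoheadrightarrow\tW/W_a=\Omega$, which kills $v$ and $W_K$ and restricts to the identity on $\Omega$, yields $\omega\in\Omega_{\mathbf f}\subseteq\Pstab$. The one point requiring care here is the free passage between a $\tW$-element and its representatives in $\brG$; this is harmless because representatives differ by elements of $T(\breve F)_1\subseteq\brI\subseteq\Pcirc$ and $\Pstab$ is precisely the union of the $\brI$-double cosets indexed by $W_{\mathbf f}$.

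For~(ii), I would note that conjugation by $\omega\in\Omega$ is a length-preserving automorphism $\Ad(\omega)$ of $\tW$ permuting $\tS$, hence preserves the Bruhat order; and writing $\omega=t^\nu v$ with $v\in W_0$ in the splitting $\tW=X_*\rtimes W_0$, one has $\Ad(\omega)(t^\lambda)=t^{v\lambda}$, so $\Ad(\omega)$ stabilizes the set $\{t^\lambda;\ \lambda\in W_0\cdot\underline\mu\}$. Since $\Adm(\mu)$ is built solely from the Bruhat order and this set of translations, $\Ad(\omega)(\Adm(\mu))=\Adm(\mu)$, i.e.\ $\omega\Adm(\mu)\omega^{-1}=\Adm(\mu)$, equivalently $\omega\Adm(\mu)=\Adm(\mu)\omega$.

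For~(iii), the inclusion ``$\supseteq$'' is clear since $\Pcirc\subseteq\Pstab$. For ``$\subseteq$'' I would use $\Pstab=\bigcup_{\omega\in\Omega_{\mathbf f}}\Pcirc\omega$, part~(ii), and $\omega\Pstab=\Pstab$ for $\omega\in\Omega_{\mathbf f}\subseteq\Pstab$, to compute
\[
    \Pstab\Adm(\mu)\Pstab=\bigcup_{\omega\in\Omega_{\mathbf f}}\Pcirc\,\omega\Adm(\mu)\,\Pstab=\bigcup_{\omega\in\Omega_{\mathbf f}}\Pcirc\,\Adm(\mu)\,\omega\Pstab=\Pcirc\Adm(\mu)\Pstab .
\]
So the genuine content sits in the bookkeeping around the decomposition of $\Pstab$ used in~(i); parts~(ii) and~(iii) are then purely formal, and I do not expect any real obstacle.
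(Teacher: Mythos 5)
Your argument is correct, and for parts (ii) and (iii) it is essentially identical to the paper's: same observation that $\Ad(\omega)$ preserves the Bruhat order and the set of translations $t^\lambda$, $\lambda\in W_0\cdot\underline{\mu}$, and the same computation for (iii) using the coset decomposition of $\Pstab$ over $\Pcirc$ together with (ii). The only real divergence is in (i). The paper proves it directly in two lines: if $\omega\mathbf f\ne\mathbf f$, then since $\omega$ preserves the base alcove, $\omega\mathbf f$ is a face of a different \emph{type} than $\mathbf f$, and since $W_a$ preserves types, $v\omega\mathbf f=\mathbf f$ is impossible. You instead cite the structural facts $W_{\mathbf f}=W_K\rtimes\Omega_{\mathbf f}$ and $\Pstab=\brI W_{\mathbf f}\brI$ and project to $\Omega=\tW/W_a$. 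That is a legitimate citation (it is in the Haines--Rapoport appendix to \cite{Pappas-Rapoport:Twisted}), but be aware that the standard proof of the cited decomposition $W_{\mathbf f}=W_K\rtimes\Omega_{\mathbf f}$ is exactly the type-preservation argument above, so your version of (i) relocates the content into the reference rather than eliminating it; the paper's route is the more self-contained of the two, while yours has the minor advantage of making explicit the double-coset bookkeeping ($\Pstab=\bigsqcup_{\omega\in\Omega_{\mathbf f}}\Pcirc\omega$, $\Omega\cap\Pstab=\Omega_{\mathbf f}$) that is implicitly used again in (iii).
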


\begin{proof}
    To prove part (1), say that $\Pstab$ is the stabilizer of the face $\mathbf f$ of the base alcove. Under the assumption that $v\omega$ stabilizes $\mathbf f$, we need to show that the same is true for $\omega$. Assume that $\omega\mathbf f \ne \mathbf f$. Since $\omega$ preserves the base alcove, this means that $\omega\mathbf f$ is a face of a type different from the type of $\mathbf f$. Because the action of $W_a$ preserves the type of faces, it is then impossible that $v\omega\mathbf f = \mathbf f$.

    For (2), note that conjugation preserves the orbit $W_0(\mu)$ and conjugation by length $0$ elements preserves the Bruhat order.

    Now we prove part (3). By part (1), we can write $P = \bigcup_p \Pcirc p$, where $p$ runs through a system of representatives of $\Pstab/\Pcirc$ given by (representatives of) length $0$ elements in $\tW$. Since $\Pcirc\subset \Pstab$ is normal, the claimed statement follows from (2).
\end{proof}

Now for $g\Pcirc\in\Gr_{\Pcirc}$, we have
\[
    g\Pcirc \in X_{\Pcirc} \quad \Longleftrightarrow \quad g^{-1}b\sigma(g) \in \Pcirc \Adm(\mu) \Pcirc,
\]
and
\[
    g\Pcirc \in \pi^{-1}(X_{\Pstab}) \quad \Longleftrightarrow \quad g^{-1}b\sigma(g) \in \Pstab \Adm(\mu) \Pstab.
\]
The lemma shows that $\Pstab \Adm(\mu) \Pstab = \Pcirc \Adm(\mu) \Pstab$, and denoting by $\breve{G}_1$ the kernel of the Kottwitz homomorphism $\breve{\kappa}$, we obtain that
\[
    \Pcirc \Adm(\mu) \Pcirc = \Pstab \Adm(\mu) \Pstab \cap b\breve{G}_1.
\]
Hence an element $g\Pcirc \in \pi^{-1}(X_{\Pstab})$ lies in $X_{\Pcirc}$ if and only if $\breve{\kappa}(g^{-1}b\sigma(g)) = \breve{\kappa}(b)$, or equivalently $\breve{\kappa}(g) = \sigma(\breve{\kappa}(g))$ (since $\pi_0(\Gr_{\Pcirc})$ is abelian). In other words,
\[
    X_{\Pcirc} = \pi^{-1}(X_{\Pstab}) \cap \breve{\kappa}^{-1}(\pi_0(\Gr_{\Pcirc})^\sigma).
\]

\medskip
Choosing a (set-theoretic) section of the projection $\pi_0(\Gr_{\Pcirc}) \to \pi_0(\Gr_{\Pstab}) = \pi_0(\Gr_{\Pcirc})/(\Pstab/\Pcirc)$, we obtain a section $\iota\colon \Gr_{\Pstab} \to \Gr_{\Pcirc}$ of $\pi$, and then can identify
\[
    \Gr_{\Pcirc} = \bigsqcup_{p\in\Pstab/\Pcirc} \iota(\Gr_{\Pstab})p,
\]
i.e., $\Gr_{\Pcirc}$ is isomorphic to a disjoint union of copies of $\Gr_{\Pstab}$.

\begin{proposition}\label{prop:product-decomposition}
    Assume that we can decompose $\pi_0(\Gr_{\Pcirc})$ as a product $\Pstab/\Pcirc \times C$ for some subgroup $C$, such that $\s$ preserves this product decomposition, and fix such an identification $\pi_0(\Gr_{\Pcirc}) = \Pstab/\Pcirc \times C$.

    This choice defines a section $\iota\colon \Gr_{\Pstab}\to \Gr_{\Pcirc}$ of $\pi$, and an identification
    \[
        \Gr_{\Pcirc} = \bigsqcup_{p\in \Pstab/\Pcirc} \iota(\Gr_{\Pstab})p.
    \]

    With respect to this decomposition, the space $X_{\Pcirc}$ is a disjoint union of copies of $X_{\Pstab}$. More precisely,
    \[
        X_{\Pcirc} = \bigsqcup_{p\in (\Pstab/\Pcirc)^\sigma} \iota(X_{\Pstab})p.
    \]
    In particular, in this situation assumption {\rm ($\heartsuit$)} is satisfied.
\end{proposition}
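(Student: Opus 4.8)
The plan is to combine the identity
\[
    X_{\Pcirc} = \pi^{-1}(X_{\Pstab}) \cap \breve\kappa^{-1}(\pi_0(\Gr_{\Pcirc})^\sigma)
\]
established just above with the decomposition $\Gr_{\Pcirc} = \bigsqcup_{\bar p\in\Pstab/\Pcirc}\iota(\Gr_{\Pstab})\bar p$ from the discussion preceding the proposition. The section $\iota$ attached to the product decomposition $\pi_0(\Gr_{\Pcirc}) = \Pstab/\Pcirc\times C$ is the one for which $\iota(\Gr_{\Pstab})$ is the union of the connected components of $\Gr_{\Pcirc}$ whose $\breve\kappa$-value lies in $\{e\}\times C$: for each component $Z$ of $\Gr_{\Pstab}$, lying over some $c\in C\cong\pi_0(\Gr_{\Pstab})$, there is a unique component $\tilde Z$ of $\Gr_{\Pcirc}$ over $(e,c)$, and $\iota|_Z := (\pi|_{\tilde Z})^{-1}$ is an isomorphism. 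Right translation by a representative of $\bar p\in\Pstab/\Pcirc$ shifts the $\Pstab/\Pcirc$-coordinate of $\breve\kappa$ by $\bar p$, which matches the general decomposition recalled above; compatibility with the perfect-scheme structures is automatic since $\iota$ is an isomorphism onto a union of connected components.

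Now I would observe that $\pi^{-1}(X_{\Pstab})$ is stable under the right $\Pstab/\Pcirc$-action and meets $\iota(\Gr_{\Pstab})$ exactly in $\iota(X_{\Pstab})$ (since $\pi$ is an isomorphism there), whence $\pi^{-1}(X_{\Pstab}) = \bigsqcup_{\bar p}\iota(X_{\Pstab})\bar p$. The crucial input is that $X_{\Pstab}$ is supported on the $\sigma$-fixed connected components of $\Gr_{\Pstab}$: the same Kottwitz-map argument as for $X_{\Pcirc}$ — if $g\Pstab\in X_{\Pstab}$ then $g^{-1}b\sigma(g)\in\Pstab\Adm(\mu)\Pstab$ forces $(\sigma-1)\breve\kappa_{\Pstab}(g) = 0$, using that $b$ is basic so that $\breve\kappa_{\Pstab}(b)$ equals the common $\breve\kappa_{\Pstab}$-value on $\Adm(\mu)$ — gives $\breve\kappa(\iota(X_{\Pstab}))\subseteq\{e\}\times C^\sigma$. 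Since $\sigma$ preserves the product, $\pi_0(\Gr_{\Pcirc})^\sigma = (\Pstab/\Pcirc)^\sigma\times C^\sigma$, so a point of $\iota(X_{\Pstab})\bar p$ has $\breve\kappa$-value in $\{\bar p\}\times C^\sigma$, which lies in $\pi_0(\Gr_{\Pcirc})^\sigma$ exactly when $\bar p\in(\Pstab/\Pcirc)^\sigma$. Hence intersecting $\pi^{-1}(X_{\Pstab})$ with $\breve\kappa^{-1}(\pi_0(\Gr_{\Pcirc})^\sigma)$ deletes precisely the summands with $\bar p\notin(\Pstab/\Pcirc)^\sigma$ and leaves the others intact, giving $X_{\Pcirc} = \bigsqcup_{\bar p\in(\Pstab/\Pcirc)^\sigma}\iota(X_{\Pstab})\bar p$. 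In particular $\iota(X_{\Pstab})\subseteq X_{\Pcirc}$ (the summand $\bar p = e$) and $\pi\circ\iota = \id$, so $\pi\colon X_{\Pcirc}\to X_{\Pstab}$ is surjective, i.e.\ ($\heartsuit$) holds.

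I do not expect a genuinely difficult step. The one place requiring care is the final bookkeeping — showing that each translate $\iota(X_{\Pstab})\bar p$ lies wholly inside, or wholly outside, $\breve\kappa^{-1}(\pi_0(\Gr_{\Pcirc})^\sigma)$ according only to whether $\bar p$ is $\sigma$-fixed — and this is exactly the point at which the hypothesis that $\sigma$ respects the product decomposition $\Pstab/\Pcirc\times C$ enters, through the resulting splitting $\pi_0(\Gr_{\Pcirc})^\sigma = (\Pstab/\Pcirc)^\sigma\times C^\sigma$.
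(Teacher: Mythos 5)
Your proposal is correct and follows essentially the same route as the paper: both arguments feed the product decomposition into the identity $X_{\Pcirc} = \pi^{-1}(X_{\Pstab}) \cap \breve{\kappa}^{-1}(\pi_0(\Gr_{\Pcirc})^\sigma)$ established just before the proposition, observe that on $\pi^{-1}(X_{\Pstab})$ the $C$-coordinate of $\breve\kappa$ is automatically $\sigma$-fixed, and conclude that membership in $X_{\Pcirc}$ is governed solely by $\sigma$-fixedness of the $\Pstab/\Pcirc$-coordinate $p$. Your bookkeeping of the section $\iota$ and the translates $\iota(X_{\Pstab})p$ matches the paper's, just spelled out in slightly more detail.
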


\begin{proof}
    Fix a decomposition $\pi_0(\Gr_{\Pcirc})=\Pstab/\Pcirc \times C$. This gives us a section $\pi_0(\Gr_{\Pstab}) \cong C \to \pi_0(\Gr_{\Pcirc})$ to which we apply the above discussion. We then have an isomorphism $\bigsqcup_{c\in C} (\Gr_{\Pcirc})_c \to \Gr_{\Pstab}$. Here $(\Gr_{\Pcirc})_c = \breve{\kappa}^{-1}(c)$ denotes the connected component corresponding to $c$. We obtain a section $\iota\colon \Gr_{\Pstab}\to \Gr_{\Pcirc}$ of $\pi$, which in turn gives us the identification
    \[
        \Gr_{\Pcirc} = \bigsqcup_{p\in \Pstab/\Pcirc} \iota(\Gr_{\Pstab})p.
    \]

    Now fix $p\in \Pstab/\Pcirc$ and let $g\Pcirc\in \pi^{-1}(X_P) \cap \iota(\Gr_{\Pstab})p$. We have $\breve{\kappa}(g) = (p, c) \in \Pstab/\Pcirc \times C$ for some $c\in C$. Then $g\Pcirc\in \pi^{-1}(X_P)$ gives us $\breve{\kappa}(g^{-1}\sigma(g))\in \Pstab/\Pcirc$, i.e., $c = \sigma(c)$. Thus the condition $\breve{\kappa}(g) = \sigma(\breve{\kappa}(g))$ is equivalent to $p=\sigma(p)$.
\end{proof}

Let us also investigate when we have equality $X_{\Pcirc} = \pi^{-1}(X_{\Pstab})$.

\begin{lemma}
    The following conditions are equivalent:
    \begin{enumerate}
        \item
            For all $x\in \pi_0(\Gr_{\Pcirc})$ with $x - \sigma(x) \in \Pstab/\Pcirc$, we have $x = \sigma(x)$.
        \item
            We have that $\s$ fixes all elements of $\Pstab/\Pcirc$, and that the sequence
            \[
                0 \to \Pstab/\Pcirc \to (\pi_0(\Gr_{\Pcirc}))^\s \to C^\s \to 0
            \]
            is exact, where $C$ denotes the quotient of $\pi_0(\Gr_{\Pcirc})$ by $\Pstab/\Pcirc$.
    \end{enumerate}
\end{lemma}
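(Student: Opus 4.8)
The plan is to rephrase both conditions in terms of the endomorphism $1-\s$ of the abelian group $A := \pi_0(\Gr_{\Pcirc})$ together with the short exact sequence of $\s$-modules
\[
    0 \to H \to A \to C \to 0, \qquad H := \Pstab/\Pcirc,
\]
written additively throughout; recall that $H$ is $\s$-stable because both $\Pstab$ and $\Pcirc$ are fixed by $\s$. Condition~(1) says exactly that $(1-\s)^{-1}(H) \subseteq A^\s$, and since the reverse inclusion is automatic (as $(1-\s)$ kills $A^\s$), this is equivalent to $(1-\s)(A) \cap H = 0$.

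The first step will be the elementary observation that, because the $\s$-invariants functor is left exact, the sequence $0 \to H^\s \to A^\s \to C^\s$ is exact; moreover an element $a \in A^\s$ maps to $0$ in $C^\s$ iff it already maps to $0$ in $C$ iff $a \in H$, so unconditionally $\ker(A^\s \to C^\s) = H^\s$. Hence exactness of $0 \to H \to A^\s \to C^\s \to 0$ amounts precisely to the conjunction of (i) $H^\s = H$ and (ii) surjectivity of $A^\s \to C^\s$, which is just what condition~(2) asserts. With this reformulation in hand, the rest is a short diagram chase: for (1)$\Rightarrow$(2), the element $h - \s(h)$ lies in $H \cap (1-\s)(A)$ for every $h \in H$ and so vanishes, giving~(i); and lifting a class $\bar c \in C^\s$ to $a \in A$, the difference $a - \s(a)$ lies in $H \cap (1-\s)(A) = 0$, so $a \in A^\s$ lifts $\bar c$, giving~(ii). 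For (2)$\Rightarrow$(1), write an element of $H \cap (1-\s)(A)$ as $h = a - \s(a)$; its image $\bar c \in C$ is $\s$-fixed, so by~(ii) we may replace $a$ by a $\s$-fixed lift of $\bar c$ without changing $h$, after which $h = a - \s(a) = 0$, using also~(i) to handle the correction term lying in $H$.

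I do not expect a genuine obstacle; the argument is purely formal once the sequence $0 \to H \to A \to C \to 0$ is set up. The only subtlety, and the point to state carefully, is the \emph{unconditional} identification $\ker(A^\s \to C^\s) = H^\s$ rather than $H$: this is precisely why condition~(2) must include, as a separate clause, the hypothesis that $\s$ acts trivially on $H$.
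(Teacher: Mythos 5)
Your proof is correct and is exactly the ``purely group-theoretic reformulation'' that the paper leaves as a routine verification (its proof consists of a single remark that the claim uses only that $\Pstab/\Pcirc$ is a $\s$-invariant subgroup of the abelian group $\pi_0(\Gr_{\Pcirc})$). Your reduction of (1) to $(1-\s)(A)\cap H=0$, the unconditional identification $\ker(A^\s\to C^\s)=H^\s$, and the two diagram chases all check out.
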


\begin{proof}
    This is a purely group-theoretic reformulation which uses only that $\Pstab/\Pcirc$ is a $\s$-invariant subgroup of the abelian group $\pi_0(\Gr_{\Pcirc})$.
\end{proof}

Note that the conditions in the lemma are satisfied for example in the following cases:
\begin{enumerate}
    \item[(a)]
        The group $\pi_0(\Gr_{\Pcirc})$ is a direct product $\pi_0(\Gr_{\Pcirc}) = C\times \Pstab/\Pcirc$ for some subgroup $C$, the operation of $\sigma$ preserves this product decomposition, and all elements of $\Pstab/\Pcirc$ are fixed by $\sigma$.
    \item[(b)]
        The action of $\sigma$ on $\pi_0(\Gr_{\Pcirc})$ is trivial. (For instance, this holds if $\bG$ splits over a totally ramified extension of $F$.)
\end{enumerate}

\begin{proposition}\label{prop:cartesian}
    If the equivalent conditions of the previous lemma are satisfied, then the diagram
    \[
        \xymatrix{ X_{\Pcirc} \ar[r]\ar[d] & \Gr_{\Pcirc} \ar[d]^{\pi}\\
            X_{\Pstab}\ar[r] & \Gr_{\Pstab}
        }
    \]
    is cartesian, i.e., $X_{\Pcirc} = \pi^{-1}(X_{\Pstab})$.
    In particular, in this situation assumption {\rm ($\heartsuit$)} is satisfied.
\end{proposition}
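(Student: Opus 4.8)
The plan is to read the statement directly off the analysis carried out just before the preceding lemma. Recall that there one always has $X_{\Pcirc}\subseteq \pi^{-1}(X_{\Pstab})$, together with the identity $\Pstab\Adm(\mu)\Pstab = \Pcirc\Adm(\mu)\Pstab$ and the description
\[
    X_{\Pcirc} = \pi^{-1}(X_{\Pstab}) \cap \breve{\kappa}^{-1}\bigl(\pi_0(\Gr_{\Pcirc})^\sigma\bigr).
\]
Concretely, for $g\Pcirc\in\Gr_{\Pcirc}$ with $g^{-1}b\sigma(g)\in\Pstab\Adm(\mu)\Pstab$ one has $g\Pcirc\in X_{\Pcirc}$ if and only if $\breve{\kappa}(g) = \sigma(\breve{\kappa}(g))$. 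Thus it suffices to prove that this last condition is automatic whenever $g\Pcirc\in\pi^{-1}(X_{\Pstab})$.

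First I would translate the membership $g\Pcirc\in\pi^{-1}(X_{\Pstab})$, i.e.\ $g^{-1}b\sigma(g)\in\Pstab\Adm(\mu)\Pstab=\Pcirc\Adm(\mu)\Pstab$, into a statement in $\pi_0(\Gr_{\Pcirc})$ by applying $\breve{\kappa}$: since $\breve{\kappa}$ kills $\Pcirc$, is $\sigma$-equivariant, takes $\Pstab$ into (the image of) $\Pstab/\Pcirc$, and is constant equal to $\breve{\kappa}(b)$ on $\Adm(\mu)$ (this last point being implicit in the identity $\Pcirc\Adm(\mu)\Pcirc = \Pstab\Adm(\mu)\Pstab\cap b\breve{G}_1$ recorded above), one obtains
\[
    \sigma(\breve{\kappa}(g)) - \breve{\kappa}(g) \in \Pstab/\Pcirc,
\]
where $\Pstab/\Pcirc$ is identified with its image in the abelian group $\pi_0(\Gr_{\Pcirc})$. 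The second step is to apply condition~(1) of the preceding lemma with $x=\breve{\kappa}(g)$ (legitimate since $\Pstab/\Pcirc$ is a subgroup, so also $x-\sigma(x)\in\Pstab/\Pcirc$): it gives exactly $\breve{\kappa}(g)=\sigma(\breve{\kappa}(g))$. Hence $\pi^{-1}(X_{\Pstab})\subseteq X_{\Pcirc}$, which combined with the reverse inclusion yields $X_{\Pcirc}=\pi^{-1}(X_{\Pstab})$, i.e.\ the square is cartesian.

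For assumption~($\heartsuit$) it then remains to note that $\pi\colon\Gr_{\Pcirc}\to\Gr_{\Pstab}$ is surjective (being the quotient map by the finite group $\Pstab/\Pcirc$), so its restriction $X_{\Pcirc}=\pi^{-1}(X_{\Pstab})\to X_{\Pstab}$ is surjective as well. I do not anticipate a genuine obstacle here: the argument is entirely formal once the description of $X_{\Pcirc}$ inside $\pi^{-1}(X_{\Pstab})$ and the identity $\Pstab\Adm(\mu)\Pstab=\Pcirc\Adm(\mu)\Pstab$ are in hand; the only point requiring a little care is the additive bookkeeping in $\pi_0(\Gr_{\Pcirc})$ and keeping straight the identification of $\Pstab/\Pcirc$ with a subgroup of it.
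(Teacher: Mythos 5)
Your proposal is correct and follows the paper's own argument: both reduce to showing $\breve{\kappa}(g)=\sigma(\breve{\kappa}(g))$ for $g\Pcirc\in\pi^{-1}(X_{\Pstab})$, observe that such $g$ satisfy $\breve{\kappa}(g)-\sigma(\breve{\kappa}(g))\in \Pstab/\Pcirc$, and then invoke condition~(1) of the preceding lemma. Your write-up merely makes explicit the bookkeeping that the paper leaves to the "above discussion."
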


\begin{proof}
    As the above discussion shows, we need to show that $\breve{\kappa}(g) = \sigma(\breve{\kappa}(g))$ for all $g\Pcirc\in \pi^{-1}(X_{\Pstab})$. But those $g$ satisfy $\breve{\kappa}(g) - \sigma(\breve{\kappa}(g))\in \Pstab/\Pcirc$, so this claim follows immediately from (1) in the above lemma.
\end{proof}

\section{Known results, new results and open cases}\label{sec:individual-cases}

As in the previous section, we assume that $(\bG, \mu)$ comes from a Rapoport--Zink space.

\subsection{Discussion of individual cases}\label{subsec:known-new-open}~\\

Table~\ref{table:known-new-open} lists the cases that come from Shimura
varieties and where $\mu$ is non-central in each $\overline{F}$-factor of $\bG$.
As we stated before, it has been checked in many cases that the strata of the
Bruhat--Tits stratification are classical Deligne--Lusztig varieties (also
before passing to perfections), and that the closures of strata are isomorphic to the closures of these classical Deligne--Lusztig varieties in the corresponding finite-dimensional partial flag varieties. The strategy in the papers cited below consists
of the following steps: Define a set-theoretic bijection in terms of Dieudonn\'e
theory, extend it to a morphism of schemes using Zink's display theory, and
check that one obtains an identification of schemes using Zariski's main theorem
and the normality of Deligne--Lusztig varieties. In all cases, the
stratification is given by viewing the Bruhat--Tits building of the group $\bJ$
in terms of lattices (``vertex lattices''), and defining the strata by
considering relative positions of Dieudonn\'e modules and such vertex lattices.
This means that Dieudonn\'e theory gives a suitable way to set up the
identifications ($\diamondsuit$), and that then the BT stratification as defined
in this paper coincides with the stratifications defined in the papers mentioned
below.

\begin{table}
	\renewcommand{\arraystretch}{1.7}
    \begin{tabularx}{\textwidth}{llc}
        $(\bG, \mu)$ & maximal case(s)\qquad\qquad\qquad\qquad\phantom{.} & minimal case \\
		\hline
        \makecell{\textbf{Linear, unitary groups} (families)}\\\hline
    $(A_n, \o^\vee_1, \emptyset)$ &  \multicolumn{2}{c}{\knownresult all cases: $\dim X(\mu, \t)_K = 0$} \\
    $({}^d A_{d-1}, \o^\vee_1, \emptyset)$ & \multicolumn{2}{c}{\knownresult all cases: \cite{Drinfeld}, \cite[Thm.~3.72]{Rapoport-Zink}} \\
    $({}^2 A'_{2m}, \o^\vee_1, \BS)$ & \knownresult \cite{Vollaard-Wedhorn} & ($=$ maximal) \\
    $({}^2 A'_{2m+1}, \o^\vee_1, \tilde \BS-\{s_0, s_{m+1}\})$ & \knownresult \cite{Vollaard-Wedhorn} & \newresult, see~\ref{even-unram-unitary}  \\
    \makecell{$(A_{n-1} \times A_{n-1}, (\o^\vee_1, \o^\vee_{n-1}), {}^1 \varsigma_0, \BS \sqcup \BS)$\\for $n\ge 2$} & \knownresult \cite{Stamm}, \cite{Helm-Tian-Xiao}, \cite{Tian-Xiao}, \cite{Tian-Xiao-2}
                                                                                                                       & ($=$ maximal) \\[-.2cm]
\makecell{$(\Res_{E/F}(A_{n-1}), (\o^\vee_1,  \o^\vee_{n-1}), \BS)$\\for $n \ge 3$, $E/F$ ramified quadratic} & \opencase
    & ($=$ maximal) \\[-.2cm]
    $(B$-$C_n, \o^\vee_1, \tilde \BS-\{s_0, s_n\})$ & \knownresult $\tS-\{ s_n\}$: \cite{Rapoport-Terstiege-Wilson}, $\tS-\{ s_0\}$: \cite{Wu} & \newresult, see~\ref{ramified-unitary-even}\\
    $(C$-$BC_n, \o^\vee_1, \tilde \BS-\{s_0, s_n\})$ & \knownresult $\tS-\{ s_n\}$: \cite{Rapoport-Terstiege-Wilson}, $\tS-\{ s_0\}$: \cite{Wu} & \newresult, see~\ref{ramified-unitary-odd} \\
    $({}^2 B$-$C_n, \o^\vee_1, \tilde \BS-\{s_n\})$ & \knownresult \cite{Rapoport-Terstiege-Wilson} & ($=$ maximal)\\
\hline
          \makecell{\textbf{Orthogonal groups} (families)}\\\hline
    $(B_n, \o^\vee_1, \tilde \BS-\{s_0, s_n\})$ & \knownresult $\tS-\{s_0\}$: \cite{howard-pappas2}, \opencase $\tS-\{s_n\}$ & \opencase\\
    $({}^2 B_n, \o^\vee_1, \tilde \BS-\{s_n\})$ & \opencase & \opencase \\
    $(C$-$B_n, \o^\vee_1, \tilde \BS-\{s_0, s_n\})$ & \opencase & \opencase \\
        $(D_n, \o^\vee_1, \tilde \BS-\{s_0, s_n\})$  & \knownresult \cite{howard-pappas2} & \newresultA, see \ref{split-orthogonal} \\
        $({}^2 D_n, \o^\vee_1, \tilde \BS-\{s_n\})$ & \opencase & \opencase \\\hline
        \textbf{Exceptional cases}\\\hline
    \makecell{$(\Res_{E/F}(A_{1}), (\o^\vee_1,  \o^\vee_{1}), \emptyset)$\\$E/F$ ramified quadratic} & \knownresult \cite{Bachmat-Goren} & \opencase \\[-.2cm]
        $(A_3, \o^\vee_2, \{s_1, s_2\})$ & \knownresult \cite{Fox} & \newresult, see~\ref{split-U22} \\
    $({}^2 A_3', \o^\vee_2, \BS)$ & \knownresult \cite{howard-pappas} & ($=$ maximal) \\
    $(C_2, \o^\vee_2, \{s_0\})$ & \knownresult \cite{Kaiser} & \knownresult \cite{goertz-yu-1} \\
    $(C$-$B_2, \o^\vee_2, \{s_0\})$ & \opencase & \opencase \\
    $(C$-$BC_2, \o^\vee_2, \{s_0\})$ & \opencase & \opencase \\
    $(C$-$BC_2, \o^\vee_2, \{s_2\})$ & \opencase & \opencase \\
    $({}^2 C_2, \o^\vee_2, \{s_0, s_2\})$ & \knownresult \cite{Oki}, \cite{Wang1} & ($=$ maximal)\\
        $({}^2 C$-$B_2, \o^\vee_2, \{s_0, s_2\})$ & \opencase & \opencase \\
        \makecell{$(\Res_{E/F}(C$-$BC_1), (\o^\vee_1, \o^\vee_1), \emptyset)$\\$E/F$ ramified quadratic} & \opencase & \opencase \\\hline
        \multicolumn{2}{l}{\textbf{Cases not arising from a Shimura variety}}\\\hline
		$(C_n, \o^\vee_1, \tilde \BS-\{s_0, s_n\})$ \\
	\end{tabularx}
    \caption{}\label{table:known-new-open}
\end{table}

The meaning of the symbols used in the table is as follows:

\vspace{.5cm}
\begin{tabular}{ll}
    \knownresult & a known case,\\[.3cm]
    \opencase &  \makecell{an open case,}\\[.3cm]
    \newresult & \makecell{a case newly settled by this paper (Prop.~\ref{prop:bt-for-intersection}),}\\[.3cm]
    \newresultA & \makecell{a case newly settled by this paper (Prop.~\ref{prop:bt-for-intersection}), up to the verification of the\\assumptions ($\diamondsuit$), ($\clubsuit$), ($\heartsuit$) for the involved RZ spaces.}
\end{tabular}

Below we give further remarks on some of the cases.

\subsubsection{$(\tilde{A}_{n-1}, \id, \omega_1^\vee, \emptyset)$ --- Harris--Taylor type}\label{HT-type}

The automorphism $\tau$ acts by rotation $$0\mapsto 1\mapsto 2\mapsto \dots \mapsto n-1\mapsto 0$$ on the affine Dynkin diagram. We have $\dim X(\mu, \tau)_K= 0$ for all $K\subsetneq \tS$, so the set-theoretic bijection defining the Bruhat--Tits stratification is automatically an isomorphism before passing to the perfection. This case arises from unitary Shimura varieties attached to groups that split over $\mathbb Q_p$. Cf.~the book~\cite{HT} by Harris and Taylor.

\subsubsection{$(\tilde{A}_{n-1}, \varrho_{n-1}, \omega_1^\vee, \emptyset)$ --- Drinfeld case}\label{example:drinfeld}

The automorphism $\tau$ is the same as in Section~\ref{HT-type}, so the composition $\Ad(\tau)\circ\sigma$ is the identity. The only rational level structure is $K=\emptyset$. We have $\dim X(\mu, \tau)= n-1$, and in this case, the set $B(\bG, \mu)$ has only one element: The basic locus equals the whole Shimura variety. The description given by Drinfeld (\cite{Drinfeld}, see also~\cite[Thm.~3.72]{Rapoport-Zink} and the subsequent discussion) of the RZ space as a formal scheme (that can be constructed by gluing pieces indexed by simplices in the Bruhat--Tits building of the corresponding group $\bJ$ (split of type $A_{n-1}$)) shows in particular that property $BT_\emptyset$ holds.

The maximal elements in $\Adm(\mu)$ are
\[
    \tau s_{n-1} s_{n-2} \cdots s_1,\quad \tau s_{n-2} s_{n-3} \cdots s_0,\quad \ldots,\quad \tau s_0 s_{-1} \cdots s_{-(n-2)}.
\]
    The automorphism $\Ad(\t)\circ \s$ is the identity map on $\tS$, and in particular the $\s$-support of an element is simply the support of $w\t\i$. From this description one sees that each element $w\in \Adm(\mu)$ is determined by its $\s$-support $\supp_\s(w)$, and that for all $K$ the order $\le_{K,\s}$ (see Section~\ref{subsec:bt-stratification}) coincides with the Bruhat order on ${}^K\tW$.

The individual strata of the BT stratification are isomorphic to classical Deligne--Lusztig varieties in products of general linear groups. In fact, for each $w$ the ambient group is the reductive group with Dynkin diagram given by $\supp_\s(w)$ (i.e., all other vertices and all edges that involve one of those are discarded) with Frobenius action given by $\Ad(\t)\circ \s = \id$.

For each $w$, the index set for the strata of type $w$ is given by $\bJ(F)/(\bJ(F)\cap \mathcal P_w)$, where $\mathcal P_w$ is the standard parahoric subgroup generated by $\supp_\s(w) \sqcup I(K, w, \s)$, and where $I(K, w, \s)$ is the subset of $K$ comprising all those vertices which are not connected to $\supp_\s(w)$, cf.~Lemma~\ref{descr-IKws}.

\subsubsection{$(\tilde{A}_{2m}, \varsigma_0, \omega_1^\vee, \tS - \{ s_0\})$ --- odd unramified unitary group case}

The automorphism $\tau$ is the same as in Section~\ref{HT-type}, so the composition $\Ad(\tau)\circ\sigma$ acts as the reflection
\[
    1 \leftrightarrow 0,\quad 2 \leftrightarrow 2m,\quad \ldots
\]
The only level structure of Coxeter type, $K=\tS - \{ s_0\}$, is hyperspecial, and it was shown in~\cite{Vollaard-Wedhorn} that property $BT_K$ holds.

\subsubsection{$(\tilde{A}_{2m+1}, \varsigma_0, \omega_1^\vee, \tS - \{ s_0, s_m\})$ --- even unramified unitary group case}\label{even-unram-unitary}

The automorphism $\tau$ is the same as in Section~\ref{HT-type}, so the composition $\Ad(\tau)\circ\sigma$ acts as the reflection
\[
    1 \leftrightarrow 0,\quad 2 \leftrightarrow 2m+1,\quad \ldots
\]
In this case, $\sigma$ fixes two vertices in the affine Dynkin diagram, namely $0$ and $m$. Both level structures $\tS - \{ s_0\}$ and  $\tS - \{ s_m\}$ are hyperspecial, and we can again apply the results of~\cite{Vollaard-Wedhorn} to see that property $BT_K$ holds for $K$ hyperspecial. It then follows from Proposition~\ref{prop:bt-for-intersection} that the same is true for $K=\tS - \{ s_0, s_m\}$. Note that Propositions~\ref{prop:diamond-for-PEL} and~\ref{prop:club-for-PEL} ensure that ($\diamondsuit$) and ($\clubsuit$) hold in this case. The sets $\mathcal A(\mu)_P$ and $\Adm(\mu)_P$ consist only of the double coset of $t^\mu$. Since $\Pstab$ is a parahoric subgroup, ($\heartsuit$) is satisfied trivially.

Let us make the case of level structure $\tS - \{ s_0, s_m\})$ more explicit: As listed in Table~\ref{table:enhanced-coxeter-data}, we have
\[
    \KAdm_0 = \{ s_{[2m+2, 2m+2-i]}s_{[m+1, m+1-j]}\t;\ i,j\ge -1, i+j\le m-2 \},
\]
so this index set depends on two parameters $i, j$ and in particular an element of $\KAdm_0$ is not determined by its length in general. (As before we set $s_{2m+2} := s_0$.)

For $w = s_{[2m+2, 2m+2-i]}s_{[m+1, m+1-j]}\t$, we have
\[
    \supp_\s(w) = \{ 0, 1, 2, \dots, i+1,  2m+2-i, \dots, 2m+2-1 \} \sqcup \{m+1-j, m+2-j, \dots, m + j + 1 \}.
\]
Note that the condition $i+j \le m-2$ ensures that this is a proper subset of $\tS$. We see that each element is determined by its $\s$-support. We write the $\s$-support as a disjoint union of two intervals (possibly empty, depending on the choice of $i$ and $j$) which are disconnected in the Dynkin diagram. The individual strata of the BT stratification are classical Deligne--Lusztig varieties in the group (over the finite residue class field of $F$) specified by the Dynkin diagram $\supp_\s(w)$, i.e., a product of two unitary groups (or just one if one of the intervals is empty).

For each $w$, the index set for the strata of type $w$ is given by $\bJ(F)/(\bJ(F)\cap \mathcal P_w)$, where $\mathcal P_w$ is the standard parahoric subgroup generated by $\supp_\s(w) \sqcup I(K, w, \s)$, and where $I(K, w, \s)$ can be described as in Lemma~\ref{descr-IKws}.

\begin{remark}
    Cho~\cite{Cho} studied the unramified unitary group case for non-hyperspecial maximal parahoric level structure. This case is fully HN decomposable, but not Coxeter type.
\end{remark}



\subsubsection{$(\tilde{A}_{3}, \id, \omega_2^\vee, \tS - \{ s_0, s_3\})$}\label{split-U22}

This case corresponds to a ``split $U(2, 2)$''. Note that for each $i\in\mathbb Z / 4\mathbb Z$, $(\tilde{A}_{3}, \id, \omega_2^\vee, \tS - \{ s_i, s_{i+1}\})$ is isomorphic to the above datum. For every $i$, the level structure $\tS - \{ s_i\}$ is hyperspecial and for those cases it was shown by Fox~\cite{Fox} that the Bruhat--Tits strata are isomorphic to Deligne--Lusztig varieties before perfection.

Note that Propositions~\ref{prop:diamond-for-PEL} and~\ref{prop:club-for-PEL} ensure that ($\diamondsuit$) and ($\clubsuit$) hold in this case. It is well-known that the sets $\mathcal A(\mu)_P$ and $\Adm(\mu)_P$ coincide in this case. Since $\Pstab$ is a parahoric subgroup, ($\heartsuit$) is satisfied trivially.
Hence Proposition~\ref{prop:bt-for-intersection} can be applied.

\subsubsection{$(B$-$C_n, \o^\vee_1, \tilde \BS-\{s_0, s_n\})$  --- even ramified unitary group case}\label{ramified-unitary-even}

To apply Proposition~\ref{prop:bt-for-intersection}, we need to check that the assumptions ($\diamondsuit$), ($\clubsuit$) and ($\heartsuit$) are satisfied. For the first two, we can use Propositions~\ref{prop:diamond-for-PEL} and~\ref{prop:club-for-PEL}. In fact, it follows from Smithling's paper~\cite{Smithling:ram-unit-even} that $\mathcal A(\mu)_\Pstab = \Adm(\mu)_\Pstab$. It follows from Proposition~\ref{prop:product-decomposition} that ($\heartsuit$) is satisfied.

\subsubsection{$(C$-$BC_n, \o^\vee_1, \tilde \BS-\{s_0, s_n\})$ --- odd ramified unitary group case}\label{ramified-unitary-odd}

To apply Proposition~\ref{prop:bt-for-intersection}, we need to check that the assumptions ($\diamondsuit$), ($\clubsuit$) and ($\heartsuit$) are satisfied. For the first two, we can use Propositions~\ref{prop:diamond-for-PEL} and~\ref{prop:club-for-PEL}. In fact, it follows from Smithling's paper~\cite{Smithling:ram-unit-odd} that $\mathcal A(\mu)_\Pstab = \Adm(\mu)_\Pstab$. In this case, $\Pstab$ is a parahoric subgroup, hence ($\heartsuit$) is satisfied.

\subsubsection{$(C$-$BC_2, \o^\vee_2, \{s_0\})$}

In terms of the affine Weyl group, this is the case $(\tilde{C}_2, \id, \o^\vee_2)$.
Let $K = \{ s_0\}$, one of the two minimal level structures of Coxeter type (the other one being $\{ s_2\}$ -- while the two enhanced Coxeter data are isomorphic, this is not true for the enhanced Tits data, because for them we must take the orientation of the local Dynkin diagram into account). We have
\[
    \KAdm(\mu)_0 = \{ \t, s_1\t, s_2\t \}.
\]
The automorphism $\Ad(\t) = \Ad(\t)\circ \s$ is given by interchanging $s_0$ and $s_2$, and fixing $s_1$. The strata are points (for $w=\t$), and classical Deligne--Lusztig varieties in $SL_2$ (for $w=s_1\t$, which has $\supp_\s(w)= \{ s_1\}$) and in the restriction of scalars of $SL_2$ along a quadratic extension (for $w=s_2\t$, which has $\supp_\s(w)=\{s_0, s_2\}$), respectively.

For each $w$, the index set for the strata of type $w$ is given by $\bJ(F)/(\bJ(F)\cap \mathcal P_w)$, where $\mathcal P_w$ is the standard parahoric subgroup generated by $\supp_\s(w)$, since in this case,  $I(K, w, \s) = \emptyset$ for all $w\in \KAdm(\mu)_0$ (cf.~Lemma~\ref{descr-IKws}).

\begin{remark}
    In~\cite{Wang2}, Wang studied the ${}^2C_2$ case with Iwahori level structure.
\end{remark}

\subsubsection{$(D_n, \o^\vee_1, \tilde \BS-\{s_0, s_n\})$}\label{split-orthogonal}

Since this case is not of PEL type, it is less clear that the RZ spaces for the different level structures (as defined by Kim~\cite{Kim}, Howard and Pappas~\cite{howard-pappas2} in the hyperspecial case, and by Hamacher and Kim~\cite{Hamacher-Kim} in general) are compatible in the sense of assumption~($\clubsuit$) above. Once this has been established, the result of Howard and Pappas together with Proposition~\ref{prop:bt-for-intersection} implies the result for level structure $\tilde \BS-\{s_0, s_n\}$.

\section{Smoothness of closures of strata}
\label{sec:smoothness}

In this section, we study the smoothness of the closures of strata.

Studying the smoothness requires us to consider the actual schemes rather than perfect schemes. Therefore the discussion below applies
\begin{itemize}
    \item in the equal characteristic case, and
    \item in cases of RZ spaces where Property $BT_K$ is satisfied (Section~\ref{subsec:weak-bt-rz}).
\end{itemize}
In those cases, the closures of strata are isomorphic to closures of (fine)
Deligne--Lusztig varieties in certain partial flag varieties as recalled in the
next paragraphs, and the whole discussion can take place in this setting.

Recall the Bruhat--Tits stratification as in Section~\ref{subsec:bt-stratification}.
Fix a Coxeter type case $(\bG, \mu, K)$ and an element $w\in \KCox(\mu)$. Write $w':= w\t\i$.

Recall from Section~\ref{subsec:bt-stratification} that we denote by $\mathcal P^\flat_w$ the standard parahoric subgroup generated by $\supp_\s(w)$. The quotient $\mathcal P^\flat_w/\brI$ is naturally identified with the full flag variety $\overline{G}/\overline{B}$ for the maximal reductive quotient $\overline{G}$ of the special fiber of the parahoric group scheme attached to $\mathcal P_w^\flat$ and its standard Borel $\overline{B}$. Let $\overline{W}$ be the Weyl group of $\overline{G}$ (for the maximal torus induced from our choice of maximal torus in $\bG$). The automorphism $\Ad(\t)\circ \s$ induces an automorphism on $\overline{G}$, $\overline{W}$, etc., which we will denote by $\bar \s$. The Dynkin diagram of $\overline{G}$ is $\supp_\s(w)=\supp_{\bar \s}(w')$ (i.e., we keep from the affine Dynkin diagram of $\bG$ the vertices lying in $\supp_\s(w)$ and the edges involving only vertices in this subset). 

The parahoric subgroup generated by $\supp_\s(w)\cap K$ induces a parabolic subgroup $\overline{Q}\subseteq \overline{G}$. By abuse of notation, we also denote by $\overline{Q}$ the corresponding set of simple reflections inside $\overline{W}$ so that we have $\overline{W}_{\overline{Q}}$, the set ${}^{\overline{Q}}\overline{W}$ of minimal length representatives, etc.

The strata of type $w$ are isomorphic to the ``fine'' Deligne--Lusztig variety
\[
    Y(w) \cong \{ g\in \overline{G}/{\overline{Q}};\ g\i \bar \s(g)\in {\overline{Q}}\cdot_{\bar \s} {\overline{B}}w'{\overline{B}} \}.
\]
The isomorphism between the BT stratum and $Y(w)$ extends to an isomorphism between the closure of the stratum and the closure of $Y(w)$ inside $\overline{G}/{\overline{Q}}$.

\subsection{Relating Deligne--Lusztig varieties and Schubert varieties}

We first show that the singularities in the closure of $Y(w)$ in $\overline{G}/{\overline{Q}}$ are smoothly equivalent to singularities in a certain Schubert variety. To do so, we need to show that the closure of $Y(w)$ in $\overline{G}/{\overline{Q}}$ equals the closure of a certain ``coarse'' Deligne--Lusztig variety. It was already explained in~\cite{GH} how to do so, but for the sake of completeness we repeat the short argument here.

Let
\[
    X_{\overline{Q}}(w') = \{ g{\overline{Q}};\ g\i \bar \s(g)\in {\overline{Q}}w' \bar \s({\overline{Q}})\}.
\]
We have $Y(w) \subseteq X_{\overline{Q}}(w')$, and hence we get the same inclusion for the closures. Because $w'$ is an $\bar \s$-Coxeter element in $\overline{W}$, $X_Q(w')$ is irreducible. To show the equality $\overline{Y(w)} = \overline{X_{\overline{Q}}(w')}$ of the two closures, it is therefore enough to show that $\dim Y(w)$ equals $\dim X_{\overline{Q}}(w')$, i.e., that
\[
    \ell(w) = \ell(\max(\overline{W}_{\overline{Q}}w'\overline{W}_{\bar \s({\overline{Q}})})) - \ell(w_{{\overline{Q}}, 0}),
\]
where $w_{0, {\overline{Q}}}$ denotes the longest element of $\overline{W}_{\overline{Q}}$.

Since $\ell(w) = \ell(w_{{\overline{Q}},0}w') - \ell(w_{{\overline{Q}},0})$, this is equivalent to saying that $w_{{\overline{Q}}, 0}w'$ is the longest element of $\overline{W}_Qw'\overline{W}_{\bar \s(Q)}$, or equivalently that $w'$ is the longest element of ${}^{\overline{Q}} \overline{W} \cap \overline{W}_{\overline{Q}}w'\overline{W}_{\bar \s({\overline{Q}})}$.

The truth of this final statement can be established by a case-by-case check for all cases in Table~\ref{table:enhanced-coxeter-data}, which we omit here. 

The closure $\overline{X_{\overline{Q}}(w')}$ is smoothly equivalent to the closure of ${\overline{Q}}w' \bar \s({\overline{Q}}) /\bar \s({\overline{Q}}) \subseteq \overline{G}/\bar \s({\overline{Q}})$, because the Lang map is a finite \'etale morphism, a Schubert variety in the partial flag variety $\overline{G}/\bar \s({\overline{Q}})$. 

\subsection{Smoothness of Schubert varieties}
\label{subsec:smoothness-schubert}

It remains to analyze the smoothness of the Schubert varieties in partial flag varieties which arise in our situation. This type of question has been extensively studied. Let us list the few results that we will use below.

\begin{enumerate}
    \item All closures of Schubert varieties are normal. In particular, if
        its dimension is $\le 1$, then it is smooth. The consequence for us is that all strata of dimension $\le 1$, i.e., whenever $\ell(w)\le 1$, are smooth.
    \item
        If $S$ is itself a $\overline{Q}$-orbit (or more generally, a homogeneous space under any parabolic subgroup -- i.e., if the boundary in the sense of Section~\ref{subsec:cominuscule-parabolic} is empty), then $S$ is smooth. In the cases relevant to us, if $w'\in \overline{W}_{\bar \s(\overline{Q})}$, then $\overline{Q}w' \bar \s(\overline{Q})/\bar \s(\overline{Q}) = \overline{Q} \bar \s(\overline{Q})/\bar \s(\overline{Q})$ is closed in $\overline{G}/\bar \s(\overline{Q})$. This settles almost all cases where $\overline{Q}\ne \bar \s(\overline{Q})$.
    \item The case where $\overline{Q}$ is the maximal parabolic subgroup attached to a minuscule or cominuscule weight has been studied in~\cite{Lakshmibai-Weyman}, \cite{Brion-Polo}. See Section~\ref{subsec:cominuscule-parabolic} below for further details.
    \item Every Schubert variety in the full flag variety attached to a Coxeter element is smooth. For instance, this follows because in this case the Schubert variety is isomorphic to its Bott--Samelson resolution.
\end{enumerate}

There is another method to check the smoothness of Schubert varieties in the full flag variety of a  classical group, building on the pattern avoidance criteria in terms of the corresponding Weyl group element, viewed as a permutation, see~\cite{Billey-Lakshmibai} Ch.~8. Furthermore, every $\overline{Q}$-Schubert variety in $\overline{G}/\bar \s(\overline{Q})$ is smoothly equivalent to its inverse image in $\overline{G}/B$, the Schubert variety for the maximal element in $\overline{Q} w' \bar \s(\overline{Q})$. We will not resort to this method here.

\subsection{The case of a (co-) minuscule parabolic}
\label{subsec:cominuscule-parabolic}

Assume that $\overline{G}$ is absolutely simple. Since we will apply the following results in the situation that $\bar \s(\overline{Q})=\overline{Q}$ we change notation slightly here and consider Schubert varieties in $\overline{G}/\overline{Q}$.

We briefly state here the answers to the smoothness question in the case of a parabolic subgroup $\overline{Q}$ attached to a minuscule weight (in other words, the simple reflections in $\overline{Q}$ are precisely those stabilizing a fixed minuscule weight), and in the case of Dynkin type $C_n$ and $\overline{Q}$ the Siegel parabolic, i.e., the parabolic that belongs to the unique minuscule coweight (the ``cominuscule case''), cf.~\cite{Lakshmibai-Weyman}, \cite{Brion-Polo}.

We can start out slightly more generally than before and consider closures of $B$-orbits in $\overline{G}/\overline{Q}$, rather than only closures of $\overline{Q}$-orbits.  Given a Schubert variety $S = \overline{{\overline{B}}v\overline{Q}/\overline{Q}} \subseteq \overline{G}/\overline{Q}$, let $\overline{Q}' \subseteq \overline{G}$ be its stabilizer, and let the \emph{boundary} $\mathop{\rm Bd}(S)$ be the complement of the open $\overline{Q}'$-orbit in $S$ (cf.~\cite{Brion-Polo}). Note that $\overline{Q}'$ can be different from $B$ or $\overline{Q}$! Of course, if $S$ is the closure of a $\overline{Q}$-orbit, then $\overline{Q}\subseteq \overline{Q}'$, so if in addition $\overline{Q}$ is maximal and $S\ne \overline{G}/\overline{Q}$, then $\overline{Q}=\overline{Q}'$.

Note that in~\cite{GH}, end of proof of Prop.~7.3.2, we incorrectly claim that~\cite{Brion-Polo} Prop.~3.3 would imply that for all Schubert varieties $\overline{C}_{\overline{Q}, v}$ for $\overline{G}$ of type $B$ and $\overline{Q}$ the minuscule maximal parabolic, the singular locus equals the boundary in the above sense.

\subsubsection{The simply laced case}\label{An}

If $\overline{G}$ is simply laced, then by~\cite{Brion-Polo} Proposition 3.3 the boundary $\mathop{\rm Bd}(S)$ equals the singular locus of $S$.

\subsubsection{Type $B$, minuscule case}
\label{subsec:cominuscule-parabolic-bn}

Let ${\overline{G}}$ be of type $B_n$ (say ${\overline{G}}$ is the special orthogonal group $SO_{2n-1}$
over some algebraically closed field), fix a maximal torus and a Borel
subgroup, and let ${\overline{Q}}$ be the maximal parabolic subgroup attached to the unique
minuscule \emph{weight} $\omega_n$.

The result of~\cite[Section~5]{Lakshmibai-Weyman} in this case is the following.
We can identify ${}^{\overline{Q}}{\overline{W}}$ with the set
\[
    \{
        (d_1, \dots, d_n);\
        1 \le d_i \le 2n-1,\ \forall i,j: d_i + d_j \ne 2n+2
    \}
\]
(In other words, precisely one of $i$, $2n-i$ occurs, and $n+1$ must not occur.)

To an element $(d_i)_i$ we attach a partition $(a_1, \dots, a_n)$ by the rule
\[
    a_{n-i+2} = \left\{
        \begin{array}{ll}
            d_i - i & \text{if\ } d_i < n+1,\\
            d_i - i -1 & \text{if\ } d_i > n+1.
        \end{array}
    \right.
\]
Then $(a_i)_i$ is a self-dual partition fitting into an $n\times n$ square.

Now~\cite[Corollary~5.10]{Lakshmibai-Weyman} states that the Schubert variety
corresponding to $(d_i)_i$ is smooth if and only if the partition $(a_i)_i$ is
either a square, or a hook.

Among all the Schubert varieties, precisely the $n+1$ cases in the table below correspond to ${\overline{Q}}$-orbit closures:

\begin{table}[h]
\renewcommand{\arraystretch}{1.7}
\begin{tabular}{@{}l@{\hskip1cm}l@{\hskip1cm}l@{}}
    $(d_i)_i$ & $(a_i)_i$ & smooth/singular? \\\hline
    $(1, \dots, n)$ & $(0, \dots, 0)$ & smooth\\
    $(2, \dots, n, 2n+1)$ & $(n, 1 \dots, 1)$ & smooth\\
    $(3, \dots, n, 2n, 2n+1)$ & $(n, n, 2, \dots, 2)$ &  singular\\
    $(i, \dots, n, 2n+3-i, \dots, 2n-1)$ & $(n^{(i-1)}, (i-1)^{(n-i+1)})$ & singular  $(3 \le i \le n)$\\
    $(n+2, \dots, 2n+1)$ & $(n, \dots, n)$ & smooth\\
\end{tabular}
\end{table}

There is exactly one line where the Schubert variety is smooth but has non-empty
boundary, namely the ``hook'' in line 2 of the table.

\subsubsection{Type $C$, minuscule case}

Let ${\overline{G}}$ be of type $C_n$ (to pin things down, let us say ${\overline{G}}$ is the symplectic
group attached to a symplectic vector space $V$ of dimension $2n$ over some
algebraically closed field, $n\ge 2$), fix a maximal torus and a Borel subgroup,
and let ${\overline{Q}}$ be the maximal parabolic subgroup attached to the unique minuscule
\emph{weight} $\omega_1$; it is the stabilizer of a line $L\subset V$. Since ${\overline{G}}$
acts transitively on the set of all lines in $V$, we can identify ${\overline{G}}/{\overline{Q}}$ with the
projective space $\mathbb P(V)$. There are $2n$ different ${\overline{B}}$-orbits in ${\overline{G}}/{\overline{Q}}$,
and there closures are projective spaces of dimension $0,\dots, 2n-1
= \dim\mathbb P(V)$ --- the same Schubert varieties which arise from $GL(V)
\cong GL_{2n}$ and ${\overline{Q}}' \subset GL_{2n}$ the maximal parabolic ${\rm
Stab}_{GL(V)}(L)$.  In particular in this situation all Schubert varieties are
smooth.

Note though that the Schubert variety of dimension $d=2n-2$ (i.e., codimension
$1$) is the set of all lines in $L^\perp$ and is therefore fixed by $Q$. In
other words, it is equal to a ${\overline{Q}}$-orbit closure, and its stabilizer is equal to
${\overline{Q}}$.  It follows that the boundary in the above sense is not empty.

\subsubsection{Type $C$, cominuscule case}\label{Cn}

In this case, \cite{Brion-Polo} Prop.~4.4 shows that for all Schubert varieties
the boundary equals the singular locus.

\subsection{Results for the individual cases}

Now we restrict to the situation of Thm.~1.4, Table 1. Note that the
answer to our question will depend not only on the Coxeter datum, but on the
actual group over the finite field, i.e., on the orientation of the Dynkin
diagram.

The complement of $K$ in $\tS$ has either one or two elements. Consider a case where it has two elements, and exclude the $\tilde{A}_{n-1}\times \tilde{A}_{n-1}$ case.
It then turns out that the situation decomposes as a product, i.e., for
all $w\in \KCox(\mu)$ which do not lie in ${}^{K'}\Cox(\mu)$ for a larger $K'$,
$\supp_\s(w)$ is a disjoint union of two Dynkin diagrams, the group
$\overline{G}$ accordingly decomposes as a product, and the situation can be
analyzed by handling the two components separately.

\begin{theorem}
    Assume that $\bG$ is quasi-simple over $F$ and $\mu$ is non-central in every $\breve F$-simple component. Suppose that the triple $(\bG, \mu, K)$ is of Coxeter type. Then all closures of BT strata in $X(\mu, \t)_K$ are smooth, except for the following cases:
    \begin{enumerate}
        \item 
             the case $(\tilde A_{n-1}, \id, \o_1^\vee + \o_{n-1}^\vee, \tS-\{0\})$ for $n \ge 4$,
        \item
            $\dim X(\mu, \t)_K \ge 2$ and at least one of the long roots of the local Dynkin diagram is not contained in $K$.
 \end{enumerate}
\end{theorem}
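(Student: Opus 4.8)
The plan is to combine the geometric reduction carried out in Sections~\ref{subsec:smoothness-schubert} and~\ref{subsec:cominuscule-parabolic} with a case-by-case inspection of Table~\ref{table:enhanced-coxeter-data}. Recall that for $w\in\KCox(\mu)$, writing $w'=w\t\i$, the closure of the stratum $Y(w)$ in $\overline{G}/\overline{Q}$ is isomorphic to $\overline{X_{\overline{Q}}(w')}$, which in turn is smoothly equivalent to the closure of $\overline{Q}\,w'\,\bar\s(\overline{Q})/\bar\s(\overline{Q})$ in $\overline{G}/\bar\s(\overline{Q})$, a Schubert variety; here $\overline{G}$ has Dynkin diagram $\supp_\s(w)$ and $\overline{Q}$ is generated by $\supp_\s(w)\cap K$. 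So everything reduces to deciding smoothness of these Schubert varieties. First I would reduce to the case that $\overline{G}$ is absolutely simple: whenever $\tS-K$ has two elements and we are not in the $\tilde A_{n-1}\times\tilde A_{n-1}$ case, the discussion at the end of Section~\ref{subsec:cominuscule-parabolic} shows that, for each $w$ not lying in ${}^{K'}\Cox(\mu)$ for a strictly larger $K'$, the support $\supp_\s(w)$ splits as a disjoint union of two Dynkin diagrams, so $\overline{G}$, $\overline{Q}$ and the Schubert variety decompose accordingly and one argues factor by factor; in the $\tilde A\times\tilde A$ case $\bar\s$ interchanges the two factors, so one again reduces to a single $\tilde A$-factor. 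After this reduction $\bar\s$ preserves $\overline{Q}$ except for finitely many strata with $\overline{Q}\ne\bar\s(\overline{Q})$, which are homogeneous and hence smooth by criterion~(2) of Section~\ref{subsec:smoothness-schubert}.

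Next I would identify $\overline{Q}$. In the Coxeter type classification $K$ is, after the above product reduction, the complement of a single vertex of $\tS$, so $\overline{Q}$ is either all of $\overline{G}$ --- in which case $Y(w)$ is a point or the full flag variety and its closure is smooth --- or a maximal parabolic subgroup, and going through Table~\ref{table:enhanced-coxeter-data} one checks that this maximal parabolic always corresponds to a minuscule \emph{weight}, or in the type $\tilde C_n$ rows to the cominuscule coweight. Hence criteria~(1) and~(4) of Section~\ref{subsec:smoothness-schubert} (normality, so smoothness whenever $\ell(w)\le 1$; smoothness of Schubert varieties attached to Coxeter elements in the full flag variety) together with the explicit descriptions of Sections~\ref{An}--\ref{Cn} apply. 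In the simply laced, the type $B$ minuscule, and the type $C$ (co-)minuscule flag varieties the singular locus of a Schubert variety equals its boundary, so the question becomes, for the largest strata, whether the relevant Schubert variety is the whole flag variety, a projective space, or a proper Schubert variety with non-empty boundary.

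It then remains to run through the rows of Table~\ref{table:enhanced-coxeter-data}. For $\underline{\mu}=\o_1^\vee$ in types $\tilde A$, $\tilde B$, $\tilde C$, $\tilde D$: in type $\tilde C_n$ the Schubert varieties that occur are projective spaces, hence smooth, whereas in types $\tilde B_n$ and $\tilde D_n$, once the dimension is $\ge 2$, one meets precisely the singular ``square'' Schubert varieties of Section~\ref{subsec:cominuscule-parabolic-bn}. The orientation of the oriented local Dynkin diagram enters exactly here, as it dictates which extremal vertex is deleted and hence whether $\overline{G}/\overline{Q}$ is an isotropic-line Grassmannian (a projective space, smooth) or a spinor-type minuscule variety (singular as soon as $\dim\ge 2$); this is the dichotomy recorded as condition~(2) in terms of a long root not lying in $K$. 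For $\underline{\mu}=\o_1^\vee+\o_{n-1}^\vee$ in type $\tilde A_{n-1}$ with $n\ge 4$, the coarse Deligne--Lusztig varieties produce singular Schubert varieties in a two-step partial flag variety, giving exception~(1); for $n=3$ all strata have dimension $\le 1$ and are smooth. The remaining exceptional data of Table~\ref{table:enhanced-coxeter-data} (those with $\underline{\mu}\in\{2\o_1^\vee,\o_2^\vee\}$) are checked directly, yielding strata of dimension $\le 1$ or smooth Schubert varieties, except in the $\tilde C_2$, $\o_2^\vee$ cases, where $\dim\ge 2$ and a long root lies outside $K$.

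The main obstacle will be this final bookkeeping. As stressed in the text, smoothness depends genuinely on the oriented local Dynkin diagram --- on the group over the finite residue field, not just on the enhanced Coxeter datum --- so one and the same abstract Schubert variety can show up as a smooth projective space for one admissible $K$ and as a singular minuscule Schubert variety for another. Matching ``a long root not in $K$'' with singularity uniformly across all the relevant type $\tilde B_n$ and $\tilde C_n$ Tits data, and correctly extracting the finite list of Weyl group elements $\overline{Q}\,w'\,\bar\s(\overline{Q})$ from the explicit description of $\KCox(\mu)$ in Table~\ref{table:enhanced-coxeter-data}, is where the real work lies.
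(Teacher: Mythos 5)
Your overall architecture coincides with the paper's: reduce each stratum closure to a Schubert variety, use the product decomposition when $\tS-K$ has two elements, dispose of the strata of dimension $\le 1$ and of those with $\overline{Q}\ne\bar\s(\overline{Q})$ (empty boundary), and decide the remaining cases via the (co)minuscule classifications. The problem is that the decisive dichotomy in your final case analysis is inverted, so your argument would produce the wrong list of exceptions. When the vertex of $\tS-K$ at the double-bond end of a type $\tilde B_n$ or $\tilde C_n$ factor is deleted, the pair $(\overline{G},\overline{Q})$ is either a type $B$ group with the \emph{minuscule} (spinor) maximal parabolic or a type $C$ group with the \emph{cominuscule} (Siegel) maximal parabolic, according to whether the deleted vertex is a short or a long root; the type $C$ minuscule case, where $\overline{G}/\overline{Q}$ is a projective space, does not occur for these strata. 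In the type $B$ minuscule case the $\overline{Q}$-orbit closures that actually arise are the ``hooks'' of the Lakshmibai--Weyman classification (Section~\ref{subsec:cominuscule-parabolic-bn}) and are \emph{smooth} — note that in that classification squares and hooks are exactly the smooth partitions, so ``singular square Schubert varieties'' is a contradiction in terms — whereas in the type $C$ cominuscule case the boundary equals the singular locus (Section~\ref{Cn}), so the Schubert variety is \emph{singular} as soon as its dimension is $\ge 2$. This is precisely how condition~(2) of the theorem arises: singularity occurs exactly when a \emph{long} root of the local Dynkin diagram is missing from $K$. Your version — projective spaces (smooth) for $\tilde C_n$ versus singular spinor-type varieties for $\tilde B_n$ and $\tilde D_n$ — contradicts both Example~\ref{example-smoothness-2} and the statement being proved: the $\tilde D_n$ cases are all smooth (their Schubert varieties have empty boundary), and the $\tilde C_n$ cases with a long root outside $K$ are precisely where singularities occur.

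Two further slips in the bookkeeping. For $(\tilde A_{n-1},\id,\o_1^\vee+\o_{n-1}^\vee,\tS-\{0\})$ the singular Schubert variety attached to $w=s_0s_1s_{n-1}$ lives in the Grassmannian of $2$-planes in a $4$-space (one maximal parabolic of an $A_3$-group, namely the one omitting the middle vertex $s_0$ of the chain $s_1-s_0-s_{n-1}$), not in a two-step partial flag variety; the conclusion (singular, by the simply-laced criterion of Section~\ref{An}) is nevertheless correct. And you cannot simply declare all $\tilde C_2$, $\o_2^\vee$ data singular: $(\tilde C_2,\id,\o_2^\vee,\{0\})$ has all strata of dimension $\le 1$, and for $(\tilde C_2,\Ad(\t_2),\o_2^\vee,\{0,2\})$ the answer again depends on the orientation of the local Dynkin diagram of the underlying Tits datum — it is smooth when the missing vertex is a short root.
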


\begin{remark}
    In this theorem, long roots and short roots occur in the local Dynkin diagrams whose associated affine Weyl groups are of type $\tilde B$ or $\tilde C$. In these cases, $\dim X(\mu, \t)_K<2$ only when the enhanced Coxeter datum is of type $(\tilde C_2, \id, \o_2^\vee, \{0\})$.
\end{remark}

\begin{proof}
    In the following cases, all strata have dimension $0$ or $1$:
    $(\tilde A_{n-1}, \id, \o_1^\vee, K)$,
    $(\tilde A_1, \id, 2 \o_1^\vee, K)$,
    $(\tilde A_3, \id, \o_2^\vee, K)$,
    $(\tilde C_2, \id, \o_2^\vee, K)$.
    It is clear that it is enough to check the case of minimal $K$.

    In the following cases, the corresponding Schubert variety has empty boundary (cf.~Section~\ref{subsec:smoothness-schubert}~(2)):
    $(\tilde A_{2m}, \varsigma_0, \o_1^\vee, K)$,
    $ (\tilde A_{2m+1}, \varsigma_0, \o_1^\vee, K)$,
    $(\tilde A_{n-1} \times \tilde A_{n-1}, {}^1 \varsigma_0, (\o_1^\vee, \o_{n-1}^\vee), K)$,
    $(\tilde A_3, \varsigma_0, \o_2^\vee, K)$,
    $(\tilde D_n, \id, \o_1^\vee, K)$,
    $(\tilde D_n, \varsigma_0, \o_1^\vee, K)$.
    It is enough to check the condition $w'\in \overline{W}_{\bar{\s}(\overline{Q})}$ for the minimal possible $K$, because for larger $K$, there are fewer elements $w'$ to check, and $\overline{Q}$ becomes larger.

    In the case $(\tilde A_{n-1}, \varrho_{n-1}, \o_1^\vee, \emptyset)$ the closures of strata are smoothly equivalent to Schubert varieties attached to Coxeter elements in a full flag variety, and hence smooth, see Section~\ref{subsec:smoothness-schubert}~(4).

    In the cases $(\tilde B_n, \Ad(\t_1), \o_1^\vee, \tS-\{n\})$ and $(\tilde C_2, \Ad(\t_2), \o_2^\vee, \{0, 2\})$, we have, depending on the orientation of the local Dynkin diagram, Schubert varieties in a group of type $B$ for a minuscule parabolic (leading to the smooth case in Line 2 of the table in Section~\ref{subsec:cominuscule-parabolic-bn}), or in a group of type $C_n$ for the cominuscule maximal parabolic (singular, unless of dimension $\le 1$).

    In the case $(\tilde B_n, \id, \o_1^\vee, \tS-\{0, n\})$ the situation decomposes as a product as explained above, where one factor contributes a smooth Schubert variety with empty boundary, and the other factor gives a Schubert variety of type $B$ or $C$, as in the previous paragraph. In the case $(\tilde C_n, \id, \o_1^\vee, \tS-\{0, n\})$ the situation decomposes as a product where both factors are of type $B$ or $C$, depending on the orientation of the local Dynkin diagram, as in the previous paragraph. If the level $K$ is a maximal proper subset of $\tS$, then only one of the two factors of this product decomposition occurs. The statement on the smoothness then follows from our discussion above.

    Regarding the case $(\tilde A_{n-1}, \id, \o_1^\vee + \o_{n-1}^\vee, \tS-\{0\})$, see Example~\ref{example-smoothness-1}.
\end{proof}

For Deligne--Lusztig varieties in orthogonal groups, cf.~also Oki's
paper~\cite{Oki}, Section~6. Note that this theorem corrects a few omissions of
smooth cases in~\cite{GH}~Section~7 and the erratum, and the incorrect claim
regarding non-smoothness in type $B$ in~\cite{GH}.

\begin{example}\label{example-smoothness-1}
    Type $(\tilde A_{n-1}, \id, \o_1^\vee + \o_{n-1}^\vee, \tS-\{0\})$. 
    
    If $n=3$, then $\KAdm(\mu)_0=\{1, s_0, s_0 s_1, s_0 s_2\}$. In this case, the closures of strata are smoothly equivalent to Schubert varieties attached to Coxeter elements in a full flag variety, and hence smooth, see Section~\ref{subsec:smoothness-schubert}~(4).
    
    If $n \ge 3$, then $s_0 s_1 s_{n-1} \in \KAdm(\mu)_0$. In this case, the Dynkin diagram of $\overline{G}$ is of type $A_3$ and consists of $s_0, s_1$ and $s_{n-1}$, and the parabolic subgroup $\overline{Q}$ is the standard parabolic subgroup of $\overline{G}$ associated to $\{s_1, s_{n-1}\}$. The Schubert variety $\overline{Q} s_0 s_1 s_{n-1} \overline{Q}/\overline{Q}$ has nonempty boundary, and hence is non-smooth by Section~\ref{An}. In fact, one may show by the same argument that the closure of a BT stratum of type $w \in \KAdm(\mu)_0$ is non-smooth if and only if $s_0 s_1 s_{n-1} \le w$. 
\end{example}

\begin{example}\label{example-smoothness-2}
    Type $(\tilde C_n, \id, \o_1^\vee, \tS-\{0, n\})$. 
    
    Let $-1 \le i \le j-2 \le n-1$ and $w=s_{[i, 0]} \i s_{[n, j]}$. Then $\supp(w)=\BS_1 \sqcup \BS_2$, where $\BS_1=\{0, 1, \cdots, i\}$ and $\BS_2=\{j, j+1 \cdots, n\}$. Set $w_1=s_{[i, 0]} \i$ and $w_2=s_{[n, j]}$. Let $\overline{G}_1$ and $\overline{G}_2$ be the connected semisimple groups associated to $\BS_1$ and $\BS_2$ respectively. Here if $i=-1$, then $\BS_1=\emptyset$ and $\overline{G}_1$ is the trivial group. Similarly, if $j=n+1$, then $\overline{G}_2$ is the trivial group. Let $\overline{B}_1$ and $\overline{B}_2$ be the standard Borel subgroups of $\overline{G}_1$ and $\overline{G}_2$ respectively. Let $\overline{Q}_1 \subset \overline{G}_1$ be the standard parabolic subgroup of type $\BS_1-\{0\}$ and $\overline{Q}_2 \subset \overline{G}_2$ be the standard parabolic subgroup of type $\BS_2-\{n\}$. Then the closure of a stratum of type $w$ is isomorphic to $X_1 \times X_2$, where $X_i=\{g \in \overline{G}_i/\overline{Q}_i; g \i \bar \s(g) \in \overline{\overline{Q}_i w_i \overline{Q}_i}\}$. 
    
    By Section~\ref{subsec:cominuscule-parabolic-bn} and Section~\ref{Cn}, $X_1$ is smooth if and only if $\{0\}$ is not the long root in the Dynkin diagram of $\BS_1$ or $i=0$ or $1$. Similarly, $X_2$ is smooth if and only if $\{n\}$ is not the long root in the Dynkin diagram of $\BS_2$ or $j=n$ or $n+1$. Hence the closure of all BT strata are smooth if and only if both $0$ and $n$ are short roots in the local Dynkin diagram of $\bG$. 
\end{example}

\end{document}